\documentclass[11pt]{article}

\setlength{\textwidth}{14.7cm} 
\setlength{\hoffset}{-0.8cm}

\usepackage{amssymb,amsmath}
\usepackage{amsthm}
\usepackage{hyperref}
\usepackage{mathrsfs}
\usepackage{textcomp}
\hypersetup{
    colorlinks,%
    citecolor=black,%
    filecolor=black,%
    linkcolor=black,%
    urlcolor=black
}

\usepackage{verbatim}

\usepackage{sectsty}

\makeatletter

\newdimen\bibspace
\setlength\bibspace{0pt}   
\renewenvironment{thebibliography}[1]{%
 \section*{\refname 
       \@mkboth{\MakeUppercase\refname}{\MakeUppercase\refname}}%
     \list{\@biblabel{\@arabic\c@enumiv}}%
          {\settowidth\labelwidth{\@biblabel{#1}}%
           \leftmargin\labelwidth
           \advance\leftmargin\labelsep
           \itemsep\bibspace
           \parsep\z@skip     %
           \@openbib@code
           \usecounter{enumiv}%
           \let\p@enumiv\@empty
           \renewcommand\theenumiv{\@arabic\c@enumiv}}%
     \sloppy\clubpenalty4000\widowpenalty4000%
     \sfcode`\.\@m}
    {\def\@noitemerr
      {\@latex@warning{Empty `thebibliography' environment}}%
     \endlist}

\makeatother

\makeatletter

\newtheorem{thm}{Theorem}[section]
\newtheorem{lem}[thm]{Lemma}
\newtheorem{prop}[thm]{Proposition}
\newtheorem{defn}[thm]{Definition}


\def\XXint#1#2#3{{\setbox0=\hbox{$#1{#2#3}{\int}$}
  \vcenter{\hbox{$#2#3$}}\kern-.5\wd0}}

\newcommand{\al}{\alpha}                \newcommand{\lda}{\lambda}
\newcommand{\om}{\Omega}                \newcommand{\pa}{\partial}
\newcommand{\va}{\varepsilon}           
\newcommand{\be}{\begin{equation}}      \newcommand{\ee}{\end{equation}}
                 
              \newcommand{\B}{\mathcal{B}}
\newcommand{\R}{\mathbb{R}}

\newcommand{\dlim}{\displaystyle\lim}

\begin{document}
\title{\bf\Large Liouville theorem and isolated singularity of fractional Laplacian system with critical exponents
\footnotetext{\hspace{-0.35cm} J. Bao
\endgraf jgbao@bnu.edu.cn
\vspace{0.25cm}
\endgraf L. Yi
\endgraf lym@mail.bnu.edu.cn
}}
\vspace{0.25cm}
\author{Yimei Li,\ \ Jiguang Bao\,\footnote{The  authers are supported in part by the National Natural Science Foundation of China (11631002).}\\
\small School of Mathematical Sciences, Beijing Normal University,\\
\small Laboratory of Mathematics and Complex Systems, Ministry of Education, Beijing 100875, China}
\date{}
\maketitle

\vspace{-0.8cm}

\date{}

\maketitle
{\bf Abstract}
This paper is devoted to the  fractional Laplacian system  with critical exponents. We use the method of moving sphere to derive a Liouville Theorem, and then prove the solutions in $\R^n\backslash\{0\}$ are radially symmetric and monotonically decreasing radially. Together with blow up analysis and the Pohozaev integral, we get the upper and lower bound of the local solutions in $B_1\backslash\{0\}$.  Our results is an extension of the classical work by Caffarelli et al \cite{CGS,CJSX}, Chen et al \cite{CZL} .

{\bf Keywords} Fractional Laplacian system $\cdot$ Critical exponents $\cdot$ Liouville theorem $\cdot$ Isolated singularity $\cdot$ Asymptotic behaviors

{\bf Mathematics Subject Classification(2010)} $35{\rm B}33 \cdot   35{\rm B}40 \cdot  35{\rm B}44$

\section{Introduction}
The semilinear elliptic equation
\be\label{I1}
\begin{cases}
\begin{aligned}
&-\Delta u=u^{\frac{n+2}{n-2}}\quad\mbox{in }\  \R^n,\\
&u>0\quad \mbox{and}\quad u\in C^2(\R^{n}),
\end{aligned}
\end{cases}
\ee
with critical exponent has been studied in many papers, where $\Delta:=\sum_{i=1}^n\frac{\partial^2}{\partial x^2_i}$ denotes the Laplacian and  $n\geq3$. It is known that there exists a positive constant $\varepsilon$ and $y\in \R^n$ such that a $C^2$ solution of \eqref{I1} has to be  the form
\[
(n(n-2))^{\frac{n-2}{4}}\left(\frac{\varepsilon}{\varepsilon^2+|x-y|^2}\right)^{\frac{n-2}{2}}.
\]
The  celebrated Liouville-type theorem was established by Caffarelli-Gidas-Spruck \cite{CGS} and the proof was by  the method of moving planes.

Under the additional hypothesis $u(x) = O(|x| ^{2-n})$ for large $|x| $, the result was obtained earlier by Obata \cite{O} and Gidas-Ni-Nirenberg \cite{GNN}. The proof of Obata was more geometric, while the proof of Gidas-Ni-Nirenberg was by the method of moving planes.  Li-Zhang \cite{LZ} developed a rather systematic, and simpler approach to Liouville-type theorems using the method of moving sphere. They can catch the form of solutions directly, instead of reducing it to the radial symmetry of $u$ and concluding by using ODE.

Such Liouville-type theorems have played a fundamental role in the study of semilinear elliptic equations
with critical exponent, which include the Yamabe problem and the Nirenberg problem. In view of conformal geometry, a solution $u$ of \eqref{I1} defines a conformally flat metric $g_{ij} = u^{\frac{4}{n-2}}\delta_{ij}$ with constant scalar curvature.

The classical work by Schoen and Yau \cite{Schoen1, Schoen2, Schoen3} on conformally flat manifolds and the Yamabe problem has highlighted the importance of studying solutions of \eqref{I1} with a nonempty singular set. The simplest case is the following equation
\be\label{I82}
\begin{cases}
\begin{aligned}
&-\Delta u=u^{\frac{n+2}{n-2}}\quad\mbox{in }\  \R^n\backslash\{0\},\\
&u>0\quad \mbox{and}\quad u\in C^2(\R^{n}\backslash\{0\}).
\end{aligned}
\end{cases}
\ee
The issues related to \eqref{I82} have received great interest and have been widely studied in \cite{CGS,Chen1,Chen2,Chen3,Fowler,KMP,Lin1,TZ} and references therein. In particular, Fowler \cite{Fowler}
described all radial solutions of \eqref{I82}, and Caffarelli-Gidas-Spruck \cite{CGS}  proved the radial
symmetry of all solutions of \eqref{I82}.

Remark that the equation \eqref{I82} in a punctured ball,
\be\label{I3}
\begin{cases}
\begin{aligned}
&-\Delta u=u^{\frac{n+2}{n-2}}\quad\mbox{in }\  B_1\backslash\{0\},\\
&u>0\quad \mbox{and}\quad u\in C^2(B_1\backslash\{0\})
\end{aligned}
\end{cases}
\ee
has been well investigated in the cerebrated paper \cite{CGS}. Caffarelli-Gidas-Spruck \cite{CGS} proved that if $0$ is a non-removable singular point of $u$, that is, $u$ can not be extended as a continuous function to the origin, then $u$ is asymptotically symmetric with respect to the origin and furthermore,
\be\label{I10}
 u(x)=u_0(x)(1+o(1))\quad\mbox{near }\ \ x=0,
\ee
where $u_0(x)$ is a positive solution of \eqref{I82}. Thus, a corollary of \eqref{I10} is that there exist two positive constants $c_1$ and $c_2$ such that
\[
c_1|x|^{-\frac{n-2}{2}}\le u(x)\le c_2|x|^{-\frac{n-2}{2}}\quad\mbox{near }\ \ x=0.
\]

On the other hand,  coupled system with critical exponent have received ever-increasing interest and have been studied intensively in the
literature. In particular, Guo-Liu  \cite{GL} and Chen-Li \cite{CLi} independently studied the Liouville-type theorem for the following two-coupled elliptic system
\be\label{I33}
\begin{cases}
\begin{aligned}
&-\Delta u=\alpha_1u^{\frac{n+2}{n-2}}+\beta u^{\frac{2}{n-2}}v^{\frac{n}{n-2}} &\quad&\mbox{in }\ \ \R^{n},\\
&-\Delta v=\alpha_2v^{\frac{n+2}{n-2}}+\beta v^{\frac{2}{n-2}}u^{\frac{n}{n-2\sigma}} &\quad&\mbox{in }\ \ \R^{n},\\
&u, v >0 \ \ \mbox{and} \ \ u,v\in C^2(\R^{n}),
\end{aligned}
\end{cases}
\ee
where $\alpha_1$, $\alpha_2$ and $\beta$ are positive constants. They obtained that both $u$ and $v$ are radially symmetric with respect to the same point. Moreover, $(u,v)=(kU,lU)$,~where $k,l>0$ satisfies
\[
\alpha_1k^{\frac{4}{n-2}}+\beta k^{\frac{4-n}{n-2}}l^{\frac{n}{n-2}}=1,\ \
\alpha_2l^{\frac{4}{n-2}}+\beta l^{\frac{4-n}{n-2}}k^{\frac{n}{n-2}}=1,
\]
and $U$ is an entire positive solution of \eqref{I1}. In fact, they studied more general systems and \eqref{I33} is a special case of their problems.

Furthermore, the properties of positive singular solutions to a two-coupled elliptic system with critical exponents are related to coupled nonlinear Schr\"{o}dinger equations with critical exponents for nonlinear optics and Bose-Einstein condensates. Stimulated by this, Chen-Lin \cite{CZL} studied the system
\be\label{I4}
\begin{cases}
\begin{aligned}
&-\Delta u=\alpha_1u^{\frac{n+2}{n-2}}+\beta u^{\frac{2}{n-2}}v^{\frac{n}{n-2}} &\quad&\mbox{in }\ \ \R^{n}\backslash\{0\},\\
&-\Delta v=\alpha_2v^{\frac{n+2\sigma}{n-2}}+\beta v^{\frac{2}{n-2}}u^{\frac{n}{n-2\sigma}} &\quad&\mbox{in }\ \ \R^{n}\backslash\{0\},\\
&u, v>0 \ \ \mbox{and} \ \ u,v\in C^2(\R^{n}\backslash\{0\}).
\end{aligned}
\end{cases}
\ee
They proved that both $u$ and $v$ are radially symmetric about the origin and are strictly decreasing with respect to $r = |x| > 0$. With some additional conditions, they also obtained that  either $(u,v)$ can be extended as a continuous function to $0$, or there exist two positive constants $c_1$ and $c_2$ such that
\[
c_1|x|^{-\frac{n-2}{2}}\le u(x)+v(x)\le c_2|x|^{-\frac{n-2}{2}}\quad\mbox{near }\ \ x=0.
\]

In recent years, the fractional Laplacian $(-\Delta)^\sigma$ has more and more applications in Physics,
Chemistry, Biology, Probability, and Finance, and  has drawn more and more attention
from the mathematical community.
The fractional Laplacian can be understood as the infinitesimal generator of a stable
L\'{e}vy process \cite{Bertoin}. In particular, the fractional Laplacian with the critical  exponent arises in contexts such as the Euler-Lagrangian equations of Sobolev inequalities \cite{CLiB, Li04,Lie83}, a fractional Yamabe problem \cite{GMS, GQ, JX1}, a fractional Nirenberg problem \cite{JLX, JLX2} and so on.

The fractional Laplacian  takes the form
\begin{equation}\label{Wthe definition of fractional laplacian}
(-\Delta)^{\sigma}u(x):=C_{n,\sigma}\lim_{\varepsilon\rightarrow0^{+}}\int_{\R^n\setminus B_{\varepsilon}(x)}\frac{u(x)-u(y)}{|x-y|^{n+2\sigma}}dy,
\end{equation}
and $$C_{n,\sigma}:=\frac{2^{2\sigma}\sigma\Gamma(\frac{n}{2}+\sigma)}{\pi^{\frac{n}{2}}\Gamma(1-\sigma)}$$ with the gamma function $\Gamma$. The operator $(-\Delta)^\sigma $ is well defined in  the Schwartz space of rapidly decaying $C^\infty$ functions in $\R^n$.

One can also define the fractional Laplacian acting on spaces of functions with weaker regularity. Considering the space
\[
L_{\sigma}(\R^n):=\left\{u\in L^1_{{\rm loc}}(\R^n): \int_{\R^n}\frac{|u(x)|}{1+|x|^{n+2\sigma}}dx<\infty\right\},
\]
endowed with the norm
\[
\|u\|_{L_{\sigma}(\R^n)}:=\int_{\R^n}\frac{|u(x)|}{1+|x|^{n+2\sigma}}dx.
\]
We can verify that if $u\in C^{2}(\R^{n})\cap L_{\sigma}(\R^n)$, the integral on the right hand side of
\eqref{Wthe definition of fractional laplacian} is well defined in $\R^n$. Moreover, from \cite[Proposition 2.4]{Silvestre}, we have
\[
\begin{array}{ll}
(-\Delta)^\sigma u\in C^{1,1-2\sigma}(\R^n),\quad\quad\mbox{if}\quad0<\sigma<1/2,\vspace{0.2cm}\\
(-\Delta)^\sigma u\in C^{0,2-2\sigma}(\R^n),\quad\quad\mbox{if}\quad 1/2\leq\sigma< 1.
\end{array}
\]

Jin-Li-Xiong \cite{JLX} studied the Liouville-type theorem  for the equation
\be\label{I5}
\begin{cases}
\begin{aligned}
&(-\Delta)^\sigma u=u^{\frac{n+2\sigma}{n-2\sigma}}\quad\mbox{in }\  \R^n,\\
&u>0\quad \mbox{and}\quad u\in C^{2}(\R^{n})\cap L_{\sigma}(\R^n).
\end{aligned}
\end{cases}
\ee
A feature of \eqref{I5} is  conformal invariant, and one may refer to \cite{CG,GZ} for its connections to conformal geometry. Since the radial symmetry property is essential for the development of symmetrization techniques for fractional elliptic and parabolic partial differential equations, a lot of  people are interested in the radial symmetry results. Jin-Li-Xiong \cite{JLX} obtained that there exist two  positive constants $\varepsilon$, $\varepsilon_1$ and $y\in \R^n$ such that a solution of \eqref{I5} has to be  the form
\[
\left(\frac{\varepsilon_1}{\varepsilon^2+|x-y|^2}\right)^{\frac{n-2\sigma}{2}}.
\]
For more details about the Liouville  Theorem, please see \cite{CLiB,CHEN1}  and the references therein.

Caffarelli-Jin-Sire-Xiong \cite{CJSX} studied the global behaviors of positive solutions of the fractional Yamabe equations
\be\label{I6}
\begin{cases}
\begin{aligned}
&(-\Delta)^\sigma u=u^{\frac{n+2\sigma}{n-2\sigma}}\quad\mbox{in }\  \R^n\backslash \{0\},\\
&u>0\quad \mbox{and}\quad u\in C^2(\R^{n}\backslash \{0\})\cap L_{\sigma}(\R^n),
\end{aligned}
\end{cases}
\ee
with an isolated singularity at the origin. They proved that if the origin is non-removable isolated singularity, then the solution $u$ of \eqref{I6} is radial symmetric with respect to the origin and strictly decreasing with respect to $r = |x| > 0$.  It is consistent with the result of Caffarelli-Gidas-Spruck \cite{CGS}  on Laplacian. Jin-de Queiroz-Sire-Xiong \cite{JQSX} obtained the same result if the equation \eqref{I6} is defined in $\R^n\setminus \R^k$ $(1\leq k\leq n-2\sigma)$ and  there exists  $x_0\in\R^k$ such that $\limsup_{x\rightarrow (x_0, 0, \cdots, 0)}u=+\infty$.

Caffarelli-Jin-Sire-Xiong \cite{CJSX} also studied the local behaviors of positive solutions of the fractional Yamabe equations
\be\label{I7}
\begin{cases}
\begin{aligned}
&(-\Delta)^\sigma u=u^{\frac{n+2\sigma}{n-2\sigma}}\quad\mbox{in }\  B_1\backslash \{0\},\\
&u>0\quad \mbox{and}\quad u\in C^2(B_1\backslash \{0\})\cap L_{\sigma}(\R^n),
\end{aligned}
\end{cases}
\ee
with an isolated singularity at the origin. They  obtained that  either $u$ can be extended as a continuous function to $0$, or there exist two positive constants $c_1$ and $c_2$ such that
\[
c_1|x|^{-\frac{n-2\sigma}{2}}\le u(x)\leq c_2|x|^{-\frac{n-2\sigma}{2}}\quad\mbox{near }\ \ x=0.
\]

Inspired by the work on Laplacian, the Liouville Theorem for the system
\be\label{I8}
\begin{cases}
\begin{aligned}
&(-\Delta)^{\sigma}u=\alpha_1u^{\frac{n+2\sigma}{n-2\sigma}}+\beta u^{\frac{2\sigma}{n-2\sigma}}v^{\frac{n}{n-2\sigma}} &\quad&\mbox{in }\ \ \R^{n},\\
&(-\Delta)^{\sigma}v=\alpha_2v^{\frac{n+2\sigma}{n-2\sigma}}+\beta v^{\frac{2\sigma}{n-2\sigma}}u^{\frac{n}{n-2\sigma}} &\quad&\mbox{in }\ \ \R^{n},\\
&u, v >0 \ \ \mbox{and} \ \ u,v\in C^2(\R^{n})\cap L_{\sigma}(\R^{n}),
\end{aligned}
\end{cases}
\ee
where $n\geq 3$, has been studied in \cite{LL,ZC}. By the method of moving plane in the integral form, \cite{ZC} obtained that the positive solution of \eqref{I8} is radial symmetry. But they do not give the form of the solution of the system.

In this paper, using the method of moving sphere and some Calculus propositions, we  catch the form of the solution, which is consistent with the work of Chen-Li \cite{CLi} and Guo-Liu \cite{GL} on Laplacian.

\begin{thm}\label{thm1}
Let $(u,v)$ be a solution of \eqref{I8},
then both $u$ and $v$ are radially symmetric with respect to the same point. In particular, $(u,v)=(k\widehat{U},l\widehat{U})$,~where $k,l>0$ satisfies
\[
\alpha_1k^{\frac{4\sigma}{n-2\sigma}}+\beta k^{\frac{4\sigma-n}{n-2\sigma}}l^{\frac{n}{n-2\sigma}}=1,\ \
\alpha_2l^{\frac{4\sigma}{n-2\sigma}}+\beta l^{\frac{4\sigma-n}{n-2\sigma}}k^{\frac{n}{n-2\sigma}}=1,
\]
and $\widehat{U}$ is a solution of \eqref{I5}.
\end{thm}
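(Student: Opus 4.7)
The plan is to adapt the method of moving spheres of Li-Zhang to the coupled fractional system, exploiting the conformal invariance of \eqref{I8}. For $x_0\in\R^n$ and $\lambda>0$, introduce the Kelvin transforms
\[
u_{x_0,\lambda}(x):=\left(\frac{\lambda}{|x-x_0|}\right)^{n-2\sigma} u\!\left(x_0+\frac{\lambda^{2}(x-x_0)}{|x-x_0|^{2}}\right),
\]
and analogously $v_{x_0,\lambda}$. Because of the critical exponent together with the balance $\frac{2\sigma}{n-2\sigma}+\frac{n}{n-2\sigma}=\frac{n+2\sigma}{n-2\sigma}$, the pair $(u_{x_0,\lambda},v_{x_0,\lambda})$ again solves \eqref{I8} in $\R^n\setminus\{x_0\}$. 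The sphere will be moved using either the Caffarelli-Silvestre extension or the integral representation for $(-\Delta)^\sigma$, together with a narrow-region / Hopf-type maximum principle for the coupled nonlocal system.

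First I would show, for each fixed $x_0$, that the moving sphere can be started: for $\lambda>0$ sufficiently small, $u_{x_0,\lambda}\le u$ and $v_{x_0,\lambda}\le v$ on $\R^n\setminus\overline{B_\lambda(x_0)}$, which uses the smoothness of $(u,v)$ near $x_0$ and the prefactor $(\lambda/|x-x_0|)^{n-2\sigma}$. Then define
\[
\bar\lambda(x_0):=\sup\left\{\mu>0:\ u_{x_0,\lambda}\le u,\ v_{x_0,\lambda}\le v\ \text{in}\ \R^n\setminus\overline{B_\lambda(x_0)}\ \text{for all}\ 0<\lambda<\mu\right\}.
\]
The scenario $\bar\lambda(x_0)=\infty$ is impossible, because for each fixed $x$ one has $u_{x_0,\lambda}(x)\to\infty$ as $\lambda\to\infty$, so $u_{x_0,\lambda}\le u$ cannot hold for all $\lambda$. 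Hence $\bar\lambda(x_0)<\infty$, and at this critical radius a strong maximum principle argument for the coupled system forces the tangency $u_{x_0,\bar\lambda(x_0)}\equiv u$ and $v_{x_0,\bar\lambda(x_0)}\equiv v$ in $\R^n\setminus\{x_0\}$.

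Since this tangency is obtained at every $x_0\in\R^n$, a Li-Zhang-type calculus proposition (analogous to the one used in \cite{LZ,JLX}) applied separately to $u$ and to $v$ yields
\[
u(x)=a_1\bigl(b_1^{2}+|x-y_1|^{2}\bigr)^{-(n-2\sigma)/2},\qquad v(x)=a_2\bigl(b_2^{2}+|x-y_2|^{2}\bigr)^{-(n-2\sigma)/2}.
\]
Because both tangencies are produced by the \emph{same} Kelvin reflection at the \emph{same} critical radius $\bar\lambda(x_0)$ for every $x_0$, the bubble parameters must satisfy $y_1=y_2$ and $b_1=b_2$. Consequently $v$ is a positive constant multiple of $u$, and both can be written as $(u,v)=(k\widehat U,l\widehat U)$ for some $k,l>0$ and a solution $\widehat U$ of \eqref{I5}. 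Substituting this ansatz into \eqref{I8} and using $(-\Delta)^\sigma\widehat U=\widehat U^{(n+2\sigma)/(n-2\sigma)}$ reduces the PDE system to the two stated algebraic identities for $k$ and $l$.

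The main obstacle will be the coupled step of the moving sphere: one must compare $u$ with $u_{x_0,\lambda}$ and $v$ with $v_{x_0,\lambda}$ \emph{simultaneously} and handle the mixed nonlinear terms $u^{2\sigma/(n-2\sigma)}v^{n/(n-2\sigma)}$ and $v^{2\sigma/(n-2\sigma)}u^{n/(n-2\sigma)}$ after reflection, in order to obtain a coupled nonlocal maximum principle that yields tangency in both components at the same $\bar\lambda(x_0)$. Establishing this joint tangency is what ultimately forces the two bubbles to share their center and scale, and it is the most delicate point of the argument.
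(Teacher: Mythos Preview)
Your overall strategy matches the paper's: run the moving spheres simultaneously on $(u,v)$ via the conformal invariance of \eqref{I8}, obtain tangency at a common critical radius $\bar\lambda(x_0)$ for every center $x_0$, and then invoke the Li--Zhang calculus lemma (Proposition~\ref{CHANG1}) to read off the bubble form for both components with the same center and scale. The deduction $(u,v)=(k\widehat U,l\widehat U)$ and the algebraic identities for $k,l$ are then correct.

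There is, however, a genuine gap in your argument for why $\bar\lambda(x_0)<\infty$. You claim that ``for each fixed $x$ one has $u_{x_0,\lambda}(x)\to\infty$ as $\lambda\to\infty$''. This is not justified and in fact fails for the very solutions you are trying to classify. For fixed $x\neq x_0$ the reflected point $y_\lambda:=x_0+\lambda^{2}(x-x_0)/|x-x_0|^{2}$ satisfies $|y_\lambda|\sim\lambda^{2}/|x-x_0|$, so
\[
u_{x_0,\lambda}(x)\sim |x-x_0|^{(n-2\sigma)/2}\,|y_\lambda|^{(n-2\sigma)/2}u(y_\lambda),
\]
and this tends to $\infty$ only if $|y|^{(n-2\sigma)/2}u(y)\to\infty$ along some sequence, which you have no a priori reason to assume. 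For a bubble $u(y)\sim c|y|^{-(n-2\sigma)}$ one actually gets $u_{x_0,\lambda}(x)\to 0$. Thus your shortcut to exclude $\bar\lambda(x_0)=\infty$ does not work.

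The paper closes this gap with a two--step dichotomy (Lemmas~\ref{lemma3} and~\ref{lemma4}). First, if $\bar\lambda(x)=\infty$ for \emph{some} $x$, one shows $\limsup_{|\xi|\to\infty}|\xi|^{n-2\sigma}U(\xi)=\infty$; this is incompatible with the existence of any other point $\hat x$ at which $\bar\lambda(\hat x)<\infty$, since tangency there (Lemma~\ref{lemma2}) forces $\limsup_{|\xi|\to\infty}|\xi|^{n-2\sigma}U(\xi)=\bar\lambda(\hat x)^{n-2\sigma}U(\hat X)<\infty$. Hence either $\bar\lambda(x)<\infty$ for all $x$ or $\bar\lambda(x)=\infty$ for all $x$. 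Second, the case $\bar\lambda(x)=\infty$ for all $x$ is ruled out by the half--space version of the Li--Zhang lemma (Proposition~\ref{CHANG2}): it forces $U(x,t)=U(0,t)$, i.e.\ $u$ and $v$ are constants, contradicting $(-\Delta)^\sigma u=\alpha_1 u^{(n+2\sigma)/(n-2\sigma)}+\cdots>0$. You should replace your one--line claim with this dichotomy argument.
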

For $x\in \R^n$ and $\lambda>0$, define
\[
u_{ x,\lda}(y):= \left(\frac{\lda }{|y-x|}\right)^{n-2\sigma}u\left(x+\frac{\lda^2(y-x)}{|y-x|^2}\right)\quad\mbox{in}\ \ \R^n\backslash\{x\},
\]
the Kelvin transformation of $u$  with respect to the ball $B_{\lambda}(x)$. If $(u,v)$ is a solution of \eqref{I8}, then $(u_{ x,\lda},v_{ x,\lda})$ is a solution of \eqref{I8} in the corresponding domain. Such conformal invariance allows us to use the moving sphere method introduced by Li-Zhu \cite{LiZhu}. This observation has also been used in \cite{CJSX,JQSX,JLX}.

Next, by the similar method  we are going to prove the radial symmetry of positive singular solutions, which is an extension of  Chen-Lin \cite{CZL} work on Laplacian.
\begin{thm}\label{ZHUYAO}
Let $(u,v)$ be a singular solution  of
\be\label{1}
\begin{cases}
\begin{aligned}
&(-\Delta)^{\sigma}u=\alpha_1u^{\frac{n+2\sigma}{n-2\sigma}}+\beta u^{\frac{2\sigma}{n-2\sigma}}v^{\frac{n}{n-2\sigma}} &\quad&\mbox{\rm in }\ \ \R^{n}\backslash\{0\},\\
&(-\Delta)^{\sigma}v=\alpha_2v^{\frac{n+2\sigma}{n-2\sigma}}+\beta v^{\frac{2\sigma}{n-2\sigma}}u^{\frac{n}{n-2\sigma}} &\quad&\mbox{\rm in }\ \ \R^{n}\backslash\{0\},\\
&u, v >0 \ \ \mbox{and} \ \ u,v\in C^2(\R^{n}\backslash\{0\})\cap L_{\sigma}(\R^{n}),
\end{aligned}
\end{cases}
\ee
that is, $\limsup_{x\rightarrow 0}u+\limsup_{x\rightarrow 0}v=\infty$. Then both $u$ and $v$ are radially symmetric and monotonically decreasing radially.
\end{thm}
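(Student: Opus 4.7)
Proof plan. The approach is to adapt the moving-sphere argument of Caffarelli--Jin--Sire--Xiong \cite{CJSX} for the scalar equation \eqref{I7} to the cooperative coupled system \eqref{1}, working in the Caffarelli--Silvestre extension. Extend $(u,v)$ to $(U,V)$ on the upper half-space $\mathbb{R}^{n+1}_{+}$ solving the degenerate-elliptic system $\mathrm{div}(t^{1-2\sigma}\nabla U)=\mathrm{div}(t^{1-2\sigma}\nabla V)=0$, with conormal conditions at $t=0$ reproducing the right-hand sides of \eqref{1} on $\mathbb{R}^{n}\setminus\{0\}$. Conformal invariance of the critical system is inherited by the extension, so for $x_{0}\in\mathbb{R}^{n}$ and $\lda>0$ the $(n+1)$-dimensional Kelvin transforms $U_{x_{0},\lda}$ and $V_{x_{0},\lda}$ again solve the extended system.

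For each $x_{0}\in\mathbb{R}^{n}\setminus\{0\}$ define $\bar\lda(x_{0})$ as the supremum of all $\mu\in(0,|x_{0}|)$ for which $U_{x_{0},\lda}\le U$ and $V_{x_{0},\lda}\le V$ in the exterior of the half-ball $B^{+}_{\lda}(x_{0})\subset\mathbb{R}^{n+1}_{+}$ for every $0<\lda\le\mu$. The upper cap $\mu<|x_{0}|$ is forced: for $\mu>|x_{0}|$ the Kelvin preimage of the singular point $0$ lies outside $B^{+}_{\mu}(x_{0})$, sending $U_{x_{0},\mu}$ to infinity where $U$ is bounded. A local computation at the regular boundary point $x_{0}$, exactly as in Lemma~3.2 of \cite{CJSX}, gives $\bar\lda(x_{0})>0$.

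The heart of the proof is to show $\bar\lda(x_{0})=|x_{0}|$ together with $U\equiv U_{x_{0},|x_{0}|}$ and $V\equiv V_{x_{0},|x_{0}|}$. Assume for contradiction $\bar\lda(x_{0})<|x_{0}|$, and set $W_{1}=U-U_{x_{0},\bar\lda}$, $W_{2}=V-V_{x_{0},\bar\lda}$. Subtracting the extended equations produces a coupled degenerate-elliptic system for $(W_{1},W_{2})$ whose conormal nonlinearity decomposes, via the mean value theorem, into nonnegative linear combinations of $W_{1}$ and $W_{2}$; the positivity of $\alpha_{1},\alpha_{2},\beta$ and the monomial form of the nonlinearities in \eqref{1} make this system fully cooperative. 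A vector-valued strong maximum principle and Hopf lemma, adapted from \cite{CJSX} and the cooperative-system theory, yield the dichotomy: either $(W_{1},W_{2})\equiv(0,0)$, or $W_{1},W_{2}>0$ strictly with strict outward normal derivative on $\partial B^{+}_{\bar\lda}(x_{0})$. The second option lets one push $\bar\lda$ a bit further, contradicting maximality. The first forces $u$ and $v$ to be singular at the Kelvin image $x_{0}(1-\bar\lda^{2}/|x_{0}|^{2})\ne 0$ of $0$, contradicting $u,v\in C^{2}(\mathbb{R}^{n}\setminus\{0\})$. Hence $\bar\lda(x_{0})=|x_{0}|$, and continuity in $\lda$ gives the asserted identities.

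With $u\equiv u_{x_{0},|x_{0}|}$ and $v\equiv v_{x_{0},|x_{0}|}$ for every $x_{0}\in\mathbb{R}^{n}\setminus\{0\}$, both $u$ and $v$ are invariant under inversion in every sphere through the origin; a classical calculus lemma (cf.\ \cite{LZ}) then forces them to be radial about $0$. Monotonicity follows from the strict inequality $u_{x_{0},\lda}<u$ for $0<\lda<|x_{0}|$ obtained along the way, by picking $x_{0}$ on a radial ray and $\lda<|x_{0}|$ so the Kelvin inversion sends one radial point to the other; the same argument applies to $v$. The principal technical obstacle is the stopping phase: transferring the scalar SMP and Hopf-lemma arguments of \cite{CJSX} to the coupled degenerate system while controlling the behavior at the singular point $0$ as $\lda\to|x_{0}|^{-}$. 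The cooperative structure of \eqref{1}, stemming from the positivity of $\alpha_{1},\alpha_{2},\beta$ and the monomial nonlinearities, is the essential ingredient that makes this transfer possible.
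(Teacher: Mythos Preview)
Your overall architecture---extension, moving spheres centered at each $x_0\ne 0$, proving the supremum $\bar\lambda(x_0)=|x_0|$ by contradiction---matches the paper's Section~4. The mechanism you propose for the stopping step (cooperative structure $\Rightarrow$ strict positivity of $W_1,W_2$ via a strong maximum principle, then push the sphere) is essentially what the paper does, though the paper handles the narrow shell $\{\lambda\le|\xi-X_0|\le\bar\lambda+\delta\}$ by an energy estimate (the ``narrow domain technique'') rather than a Hopf lemma.

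There is, however, a genuine gap in how you pass from $\bar\lambda(x_0)=|x_0|$ to radial symmetry. You assert that ``continuity in $\lambda$ gives the asserted identities'' $U\equiv U_{x_0,|x_0|}$ and $V\equiv V_{x_0,|x_0|}$, and then invoke invariance under all spheres through the origin. Continuity in $\lambda$ gives only the inequality $U_{x_0,|x_0|}\le U$, not equality, and the equality is in fact \emph{false} for typical radial singular solutions. For instance, for $u(y)=c|y|^{-(n-2\sigma)/2}$ one computes
\[
u_{x_0,|x_0|}(y)=\Big(\frac{|x_0|}{|y-x_0|}\Big)^{\frac{n-2\sigma}{2}}u(y),
\]
which agrees with $u(y)$ only on the sphere $|y-x_0|=|x_0|$. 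So the route ``equality at $\lambda=|x_0|$ $\Rightarrow$ radial'' does not go through, and no calculus lemma in \cite{LZ} covers this situation.

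The paper's remedy is to work \emph{only} with the family of inequalities $u_{x,\lambda}\le u$ for $0<\lambda<|x|$ and extract symmetry and monotonicity by a limiting trick: for $t\le s$ with $t+s>0$, take $x=me_1$, $\lambda^2=(m-s)(m-t)$, evaluate the inequality at $y=te_1$, and send $m\to\infty$; the conformal factor tends to $1$ and one obtains $u(se_1)\le u(te_1)$. Repeating with centers $-me_1$ and combining the two yields $u(se_1)=u(-se_1)$, hence radial symmetry about any hyperplane through $0$, and the case $0<t<s$ gives the radial monotonicity. Your monotonicity sketch (``pick $x_0$ on a ray and $\lambda<|x_0|$ so the Kelvin inversion swaps two radial points'') is exactly this computation before passing to the limit; you should use the same device for symmetry rather than the unattainable equality at $\lambda=|x_0|$.
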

More interesting, we also study the local behaviors of positive solutions of system in a punctured ball
\be\label{91}
\begin{cases}
\begin{aligned}
&(-\Delta)^{\sigma}u=\alpha_1u^{\frac{n+2\sigma}{n-2\sigma}}+\beta u^{\frac{2\sigma}{n-2\sigma}}v^{\frac{n}{n-2\sigma}} &\quad&\mbox{in }\ \ B_1\backslash\{0\},\\
&(-\Delta)^{\sigma}v=\alpha_2v^{\frac{n+2\sigma}{n-2\sigma}}+\beta v^{\frac{2\sigma}{n-2\sigma}}u^{\frac{n}{n-2\sigma}} &\quad&\mbox{in }\ \ B_1\backslash\{0\},\\
&u, v >0 \ \ \mbox{and} \ \ u,v\in  C^2(B_1\backslash\{0\})\cap L_{\sigma}(\R^{n}).
\end{aligned}
\end{cases}
\ee
\begin{thm}\label{thm:a}
Let $(u,v)$ be a  solution of \eqref{91}. Then either $(u,v)$ can be extended as a continuous function near $0$, or there exist two positive constants $c_1$ and $c_2$ such that
\[
c_1|x|^{-\frac{n-2\sigma}{2}}\le (u+v)(x)\le c_2|x|^{-\frac{n-2\sigma}{2}}\quad\quad\mbox{\rm near}\ \ x=0.
\]
\end{thm}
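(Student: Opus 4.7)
My plan is to establish the stated dichotomy in two independent steps: an \emph{upper bound} that always holds, and a \emph{lower bound} whose failure forces $(u,v)$ to extend continuously across $0$. Throughout, I work with the Caffarelli--Silvestre extensions $U,V$ on $\R^{n+1}_+$, so that the nonlocal system \eqref{91} reduces to a degenerate elliptic system whose weighted normal derivative on $\partial\R^{n+1}_+$ equals the right-hand side of \eqref{91}; all subsequent blow-up and Pohozaev computations are carried out in this extended picture.

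For the upper bound I argue by contradiction. Assume there is $x_k\to 0$ with $|x_k|^{(n-2\sigma)/2}(u(x_k)+v(x_k))\to\infty$. A Schoen-type selection on dyadic annuli produces new blow-up centers $\tilde x_k$ and scales $\lambda_k\to 0$ such that the rescaled pair
\[
u_k(y):=\lambda_k^{(n-2\sigma)/2}u(\tilde x_k+\lambda_k y),\qquad v_k(y):=\lambda_k^{(n-2\sigma)/2}v(\tilde x_k+\lambda_k y)
\]
solves \eqref{I8} on its domain (critical scaling), is normalized so that $u_k(0)+v_k(0)=1$, and is uniformly bounded on every compact set. Standard regularity for the weighted extension yields $C^2_{\mathrm{loc}}$ convergence of a subsequence to a solution of \eqref{I8} on $\R^n$, which by Theorem~\ref{thm1} must be of the form $(k\widehat U, l\widehat U)$. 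The maximum/location properties built into the selection then contradict the assumed unboundedness of $|x_k|^{(n-2\sigma)/2}(u+v)(x_k)$.

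For the lower bound I use a fractional Pohozaev identity adapted to the system. The algebraic key is that the full nonlinearity is the gradient of
\[
F(u,v)=\frac{n-2\sigma}{2n}\bigl(\alpha_1 u^{\frac{2n}{n-2\sigma}}+\alpha_2 v^{\frac{2n}{n-2\sigma}}\bigr)+\frac{\beta(n-2\sigma)}{n}u^{\frac{n}{n-2\sigma}}v^{\frac{n}{n-2\sigma}},
\]
which is homogeneous of degree $2n/(n-2\sigma)$. Combined with the divergence identity for the Caffarelli--Silvestre extension (in the spirit of \cite{CJSX,JQSX}), this homogeneity produces a Pohozaev quantity $\mathcal P(r)$ that is independent of $r\in(0,1)$. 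If $\mathcal P\equiv 0$, a removable-singularity argument following \cite{CJSX} shows $(u,v)$ is bounded near $0$ and extends continuously. If $\mathcal P\ne 0$, suppose for contradiction that $(u+v)(x_k)|x_k|^{(n-2\sigma)/2}\to 0$ along some $x_k\to 0$; the rescalings $(u_{|x_k|},v_{|x_k|})$ then converge, using the already-established upper bound, to an entire regular solution of \eqref{I8}, whose Pohozaev integral vanishes by direct computation, contradicting $\mathcal P(r)\equiv\mathcal P\ne 0$. A dyadic Harnack argument on $B_{2r}\setminus B_{r/2}$ upgrades the resulting pointwise nondegeneracy to the uniform lower bound.

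The main obstacle I anticipate is deriving the Pohozaev identity for the \emph{system} in the extension formulation: the coupling term $\beta u^{2\sigma/(n-2\sigma)}v^{n/(n-2\sigma)}$ must be absorbed cleanly into the potential $F$, and the $r$-invariance of $\mathcal P$ depends essentially on the joint criticality of all three nonlinear exponents. A secondary difficulty is ensuring that the blow-up limit in the lower-bound step is an entire (not singular) solution, so that Theorem~\ref{thm1} rather than Theorem~\ref{ZHUYAO} applies; this requires carefully combining the vanishing hypothesis at $x_k$ with the pre-established upper bound to rule out concentration of mass at the rescaled origin.
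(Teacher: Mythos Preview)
Your overall architecture---upper bound by blow-up, lower bound/removability via a Pohozaev invariant---matches the paper, but both halves have genuine gaps in the mechanism that produces the contradiction.

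\textbf{Upper bound.} Converging to a bubble $(k\widehat U,l\widehat U)$ is not by itself a contradiction: bubbles \emph{do} exist for \eqref{I8}, and nothing in the Schoen selection forbids them. The paper's contradiction is obtained differently. After extracting the rescaled sequence $(U_j,V_j)$, it runs the \emph{moving-sphere} method on each $U_j+V_j$ in the rescaled domain $\Omega_j$ (Lemmas~\ref{Lemma5.1}--\ref{Lemma5.2}), using that the rescaled singular point $Y_0=-\bar x_j\,w(\bar x_j)^{2/(n-2\sigma)}$ escapes to infinity. This yields $(U_j+V_j)_{X_0,\lambda}\le U_j+V_j$ for \emph{every} center $X_0$ and \emph{every} radius $\lambda>0$, and passing to the limit gives $(\widetilde u+\widetilde v)_{x,\lambda}\le \widetilde u+\widetilde v$ for all $x,\lambda$, which forces $\widetilde u+\widetilde v\equiv\text{const}$ by the calculus lemma (Proposition~\ref{CHANG2}) and contradicts the bubble form. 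Your phrase ``maximum/location properties built into the selection'' does not supply this step.

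\textbf{Lower bound.} Two issues. First, the implication ``$\mathcal P=0\Rightarrow$ removable'' is not what \cite{CJSX} (or this paper) proves; the removability input is the \emph{pointwise} condition $\lim_{|x|\to 0}|x|^{(n-2\sigma)/2}(u+v)=0$, after which a supersolution comparison with $\Phi_\mu(X)=|X|^{-\mu}-\delta t^{2\sigma}|X|^{-\mu-2\sigma}$ gives $U+V\le C|X|^{-\alpha}$ for $\alpha<(n-2\sigma)/2$ and hence $W^{1,2}(t^{1-2\sigma})$ membership across $0$ (Theorem~\ref{15}). Going from $\mathcal P=0$ to that pointwise decay is a separate statement you have not argued. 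Second, under the hypothesis $(u+v)(x_k)|x_k|^{(n-2\sigma)/2}\to 0$, the critical rescalings $u_{|x_k|},v_{|x_k|}$ tend to $0$ on every compact set of $\R^n\setminus\{0\}$ by Harnack (Lemma~\ref{lem:spherical harnack}); the limit is \emph{not} a nontrivial entire solution of \eqref{I8}, so Theorem~\ref{thm1} is irrelevant here. The paper instead normalizes by the value $(U+V)(r_ie_1)$ along a sequence $r_i$ of local minima of $r^{(n-2\sigma)/2}\overline{u+v}(r)$, so that the nonlinearity drops out and the (nontrivial) limit solves the homogeneous Neumann problem; B\^ocher (Proposition~\ref{Bocher}) then gives $W=a_1|X|^{2\sigma-n}+b_1$, $Z=a_2|X|^{2\sigma-n}+b_2$ with $a_1+a_2=b_1+b_2=\tfrac12$, and evaluating the Pohozaev identity on the limit produces a nonzero constant, contradicting $\mathcal P=0$ obtained from the gradient estimates along $r_i$. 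This B\^ocher step, not a second appeal to the Liouville theorem, is the engine of the lower-bound argument.
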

Our paper is organized as follows. Section \ref{F1} includes  some definitions of basic space and  elementary propositions which will be used in our following proof. Section \ref{F4} is devoted to obtain the Liouville Theorem, that is Theorem \ref{thm1}. Theorem \ref{ZHUYAO} on  symmetry of global solutions of \eqref{1} is proved in Section \ref{F3}. In section \ref{F2},  we obtain the blow up upper and lower bounds and prove Theorem \ref{thm:a}. Finally, we collect some propositions in Section \ref{F5}.
\section{Preliminaries}\label{F1}
\subsection{The Extension Method}
Since the operator $(-\Delta)^\sigma $ is nonlocal, the traditional methods on local differential operators, such as on Laplacian  may not work on this nonlocal operator. To circumvent this difficulty, Caffarelli and Silvestre \cite{CS} introduced the extension method that reduced this nonlocal problem into a local one in higher dimensions with the conormal derivative boundary condition.

More precisely,  for $u\in C^{2}(\R^n)\cap L_{\sigma}(\R^n)$,  define
\be\label{DD}
U(x,t):=\int_{\R^n}\mathcal{P}_{\sigma}(x-\xi,t)u(\xi)d\xi,\quad 
\ee
where
\[
\mathcal{P}_{\sigma}(x,t):=\frac{\beta(n,\sigma)t^{2\sigma}}{(|x|^2+t^2)^{(n+2\sigma)/2}}
\]
with a constant $\beta(n,\sigma)$ such that $\int_{\R^n}\mathcal{P}_{\sigma}(x,1)dx=1$. 
It follows that
$$U\in C^{2}(\R^{n+1}_{+})\cap C(\overline{\R^{n+1}_{+}}),\quad t^{1-2\sigma} \pa_t U(x,t)\in C(\overline{\R^{n+1}_{+}}),$$
and $U$ satisfies
\be\label{BI}
\mathrm{div}(t^{1-2\sigma} \nabla U)=0 \quad\mbox{in }\ \ \R^{n+1}_{+}
\ee
\[
U=u\quad\quad\quad\ \mbox{on }\ \ \partial'\R^{n+1}_{+},
\]
where $\partial'\R^{n+1}_{+}=\R^{n}\times\{0\}$.

In addition, by works of Caffarelli and Silvestre \cite{CS}, it is known that up to a constant,
$$\frac{\partial U}{\partial \nu^\sigma}=(-\Delta)^{\sigma}u\quad\mbox{on } \ \partial'\R^{n+1}_{+},$$
where
\[
\frac{\partial U(x,0)}{\partial \nu^\sigma}:=-\lim_{t\to 0^+} t^{1-2\sigma} \pa_t U(x,t).
\]
From this and  $(u,v)$ is a solution of \eqref{I8}, we have
\be\label{Weq:ex0001}
\frac{\partial U}{\partial \nu^\sigma}=\alpha_1u^{\frac{n+2\sigma}{n-2\sigma}}+\beta u^{\frac{2\sigma}{n-2\sigma}}v^{\frac{n}{n-2\sigma}} \quad\mbox{ on}\ \ \partial'\R^{n+1}_{+}.
\ee
In order to study the behavior of the solution \eqref{I8}, we just need to study the behaviors of $U$ defined by \eqref{DD}.
\subsection{A Weight Sobolev Space}
In our paper, the solutions of \eqref{I8}, \eqref{1} and \eqref{91} $(u,v)$ are understood in the classical sense. By the above argument, it follows that $(U,V)$ are also understood in the classical sense. We need some useful propositions such as the local max/min principle in our proof. There is no need to request  classical solutions to guarantee these propositions that are right. That is, the weak solutions is enough. Hence, we introduce the Weight Sobolev Space and the definition of weak solutions.

Let $D$ be an open set in $\R_+^{n+1}$. Denote by $L^2(t^{1-2\sigma}, D)$ the Banach space of all measurable functions $U$, defined on $D$, for which
\[
\|U\|_{L^2(t^{1-2\sigma},D)}:=\left(\int_{D}t^{1-2\sigma} |U|^2dX\right)^{\frac{1}{2}}<\infty,
\]
and $X:=(x,t)\in \R^{n}\times \R_+$. We say that $U\in W^{1,2}(t^{1-2\sigma},D)$ if $U\in L^2(t^{1-2\sigma}, D)$, and its weak derivatives $\nabla U$ exist and belong to $L^2(t^{1-2\sigma}, D)$. The norm of $U$ in $W^{1,2}(t^{1-2\sigma}, D)$ is given by
\[
\|U\|_{W^{1,2}(t^{1-2\sigma}, D)}:=\left(\int_{D}t^{1-2\sigma}|U|^2dX+\int_{D}t^{1-2\sigma}|\nabla U|^2dX\right)^{\frac{1}{2}}.
\]
Notice that $C^\infty (D)$ is dense in $W^{1,2}(t^{1-2\sigma},D)$. Moreover, if $D$ is a bounded domain with Lipschitz boundary, then there exists a bounded linear extension operator from $W^{1,2}(t^{1-2\sigma},D)$ to $W^{1,2}(t^{1-2\sigma},\R^{n+1}_+)$.

Next, we present a  well known result called the trace inequality.
\begin{prop}\label{tra}{\rm \cite[Proposition 2.1]{JLX}}
If $U\in W^{1,2}(t^{1-2\sigma}, \R^{n+1}_+)$, then there exists a positive constant $C$  depending only on $n$ and $\sigma$ such that
\be
\bigg(\int_{\R^n}|U(\cdot,0)|^{\frac{2n}{n-2\sigma}}dx\bigg)^{\frac{n-2\sigma}{2n}}\leq C\bigg(\int_{\R^{n+1}_{+}}t^{1-2\sigma}|\nabla U|^{2}dxdt\bigg)^{\frac{1}{2}}
\ee
\end{prop}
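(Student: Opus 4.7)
The plan is to prove the inequality first for a smooth dense subclass and then extend to all of $W^{1,2}(t^{1-2\sigma},\R^{n+1}_{+})$ using the density of $C^{\infty}$ and the bounded linear extension operator noted earlier in this subsection. Accordingly, fix $U\in C_{c}^{\infty}(\overline{\R^{n+1}_{+}})$ and set $u:=U(\cdot,0)\in C_{c}^{\infty}(\R^{n})$; it suffices to establish the inequality for such $U$.

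The first step is to replace $U$ by the Poisson-type extension of its own trace, namely
\[
\widetilde{U}(x,t):=\int_{\R^{n}}\mathcal{P}_{\sigma}(x-\xi,t)\,u(\xi)\,d\xi,
\]
which solves the degenerate equation \eqref{BI} with boundary value $u$. A standard energy/minimizer argument (multiply \eqref{BI} by $\widetilde{U}-V$ and integrate by parts) shows that $\widetilde{U}$ minimizes $V\mapsto \int_{\R^{n+1}_{+}}t^{1-2\sigma}|\nabla V|^{2}\,dX$ among all $V\in W^{1,2}(t^{1-2\sigma},\R^{n+1}_{+})$ with trace $u$. Therefore
\[
\int_{\R^{n+1}_{+}}t^{1-2\sigma}|\nabla \widetilde{U}|^{2}\,dX\;\le\;\int_{\R^{n+1}_{+}}t^{1-2\sigma}|\nabla U|^{2}\,dX,
\]
and the proof is reduced to bounding $\|u\|_{L^{2n/(n-2\sigma)}(\R^{n})}$ by the weighted Dirichlet energy of $\widetilde{U}$.

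The second step is the Plancherel identity
\[
\int_{\R^{n+1}_{+}}t^{1-2\sigma}|\nabla \widetilde{U}|^{2}\,dX\;=\;c(n,\sigma)\int_{\R^{n}}|\xi|^{2\sigma}\,|\widehat{u}(\xi)|^{2}\,d\xi\;=\;c(n,\sigma)\,\|(-\Delta)^{\sigma/2}u\|_{L^{2}(\R^{n})}^{2},
\]
proved by taking the Fourier transform of $\widetilde{U}$ in the $x$-variable, expressing the symbol of $\mathcal{P}_{\sigma}$ in Bessel form, and carrying out the $t$-integration. Combining this with the classical fractional Sobolev embedding $\|u\|_{L^{2n/(n-2\sigma)}(\R^{n})}\le C\,\|(-\Delta)^{\sigma/2}u\|_{L^{2}(\R^{n})}$, which itself follows from the Hardy--Littlewood--Sobolev inequality applied to the Riesz potential representation $u=(-\Delta)^{-\sigma/2}(-\Delta)^{\sigma/2}u$, one obtains
\[
\bigg(\int_{\R^{n}}|u|^{\frac{2n}{n-2\sigma}}\,dx\bigg)^{\frac{n-2\sigma}{2n}}\le C\bigg(\int_{\R^{n+1}_{+}}t^{1-2\sigma}|\nabla U|^{2}\,dX\bigg)^{1/2},
\]
which is the desired estimate; density in $W^{1,2}(t^{1-2\sigma},\R^{n+1}_{+})$ then completes the proof.

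The main technical obstacle is the Fourier-side identity relating the weighted Dirichlet energy of $\widetilde{U}$ to the $\dot{H}^{\sigma}$-seminorm of its trace, since it requires an explicit computation of the Fourier transform of the Poisson-type kernel $\mathcal{P}_{\sigma}$ in terms of modified Bessel functions and careful bookkeeping of the constants $\beta(n,\sigma)$ and $C_{n,\sigma}$. A more self-contained alternative would be to prove a direct truncation/rearrangement estimate on $\widetilde{U}$, exploiting that $t^{1-2\sigma}$ is a Muckenhoupt $A_{2}$ weight so that the corresponding weighted Sobolev inequality on $\R^{n+1}_{+}$ holds and restricts suitably to the trace; however, the Fourier approach is the most direct and is the route taken in \cite{JLX}.
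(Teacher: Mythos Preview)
The paper does not actually supply its own proof of this proposition: it is stated with the citation \cite[Proposition 2.1]{JLX} and then used as a black box, so there is nothing in the present paper to compare your argument against. Your outline is the standard and correct route---reduce to smooth compactly supported $U$, replace $U$ by the $\mathcal{P}_{\sigma}$-extension of its trace (which minimizes the weighted Dirichlet integral among competitors with the same boundary datum), identify that minimal energy with $c(n,\sigma)\|(-\Delta)^{\sigma/2}u\|_{L^{2}}^{2}$ via the Fourier transform in $x$, and conclude by the fractional Sobolev embedding coming from Hardy--Littlewood--Sobolev. This is indeed essentially the argument in \cite{JLX}, so your closing remark is accurate.

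One small point worth tightening: the density step hides the fact that the trace map $U\mapsto U(\cdot,0)$ must itself be shown to extend continuously from $C_{c}^{\infty}(\overline{\R^{n+1}_{+}})$ to $W^{1,2}(t^{1-2\sigma},\R^{n+1}_{+})$, and this continuity is exactly what the inequality provides. The usual resolution is to prove the inequality first for smooth $U$, observe that it exhibits the trace as a bounded linear operator on a dense subspace, and then \emph{define} the trace on the whole space by continuous extension; you should phrase the density argument in that order to avoid any appearance of circularity.
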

We denote $\B_R(X)$ as the ball in $\R^{n+1}$ with radius $R$ and center $X$, $\B^+_R(X)$ as $\B_R(X)\cap \R^{n+1}_+$, and $B_R(x)$ as the ball in $\R^{n}$ with radius $R$ and center $x$. We also write $\B_R(0)$, $\B^+_R(0)$, $B_R(0)$ as $\B_R$, $\B_R^+$, $B_R$ for short respectively. For a domain $D\subset \R^{n+1}_+$ with boundary $\pa D$, we denote $\pa' D:=\partial D\cap\partial\R^{n+1}_+$ and $\pa''D:=\pa D \cap\R^{n+1}_+$. It is easy to see that  $\pa' \B_R^+(X):=\partial\B^+_R(X)\cap\partial\R^{n+1}_+$ and $\pa'' \B_{R}^+(X):=\pa \B^+_{R}(X)\cap\R^{n+1}_+$.

\begin{defn}
Let $\pa' D\neq\emptyset$, $a\in L_{\rm loc}^{2n/(n+2\sigma)}(\pa' D)$ and $b\in L_{\rm loc}^{1}(\pa' D)$. We say $U\in W^{1,2}(t^{1-2\sigma}, D)$ is a weak solution (resp. supersolution, subsolution) of
\[
\begin{cases}
\begin{aligned}
&\mathrm{div}(t^{1-2\sigma} \nabla U)=0 &\quad&\mbox{\rm in }\ \ D,\\
&\frac{\partial U}{\partial \nu^\sigma}=aU+b&\quad&\mbox{\rm on }\  \pa' D,
\end{aligned}
\end{cases}
\]
if for every nonnegative $\Phi\in C^\infty_c(D\cup \pa' D)$,
\[
\int_{\B^{+}_{R}}t^{1-2\sigma}\nabla U\nabla \Phi dX=({\rm{resp}}. \geq,\leq )\int_{B_{R}}(aU+b)\Phi dx.
\]
\end{defn}

\subsection{Local max/min principle}
\begin{prop}\label{Harnack inequality}
Let $U$, $V\in W^{1,2}(t^{1-2\sigma}, \B^{+}_{R})$ be a nonnegative weak subsolution ( resp.supersolution,solution)  of
\be\label{HA}
\begin{cases}
\begin{aligned}
&\mathrm{div}(t^{1-2\sigma} \nabla U)\geq(\rm resp.\leq,=)0 &\quad&\mbox{\rm in }\ \ \B^{+}_{R},\\
&\mathrm{div}(t^{1-2\sigma} \nabla V)\geq(\rm resp.\leq,=)0 &\quad&\mbox{\rm in }\ \ \B^{+}_{R},\\
&\frac{\partial U}{\partial \nu^\sigma}\leq(\rm resp.\geq,=)\alpha_1U^{\frac{n+2\sigma}{n-2\sigma}}+\beta U^{\frac{2\sigma}{n-2\sigma}}V^{\frac{n}{n-2\sigma}} &\quad&\mbox{\rm on }\  \pa'\B^{+}_{R},\\
&\frac{\partial V}{\partial \nu^\sigma}\leq(\rm resp.\geq,=)\alpha_2V^{\frac{n+2\sigma}{n-2\sigma}}+\beta V^{\frac{2\sigma}{n-2\sigma}}U^{\frac{n}{n-2\sigma}} &\quad&\mbox{\rm on }\  \pa'\B^{+}_{R}.
\end{aligned}
\end{cases}
\ee
Then
\begin{description}
  \item[(i)] If $U$, $V\in W^{1,2}(t^{1-2\sigma}, D)$ is a weak subsolution of \eqref{HA}, then for all $p>0$,
\[
\sup_{\B^+_{R/2}}(U+V)\le C \|U+V\|_{L^p(t^{1-2\sigma},\B^+_{3R/4})},
\]
where $C$ is a positive constant and depends only on $n$, $\sigma$, $R$, $p$, $\|U+V\|_ {W^{1,2}(t^{1-2\sigma}, \B^{+}_{R})}$.
  \item[(ii)] If $U$, $V\in W^{1,2}(t^{1-2\sigma}, D)$ is a weak supersolution of \eqref{HA}, then for all $0<p_0\leq (n+1)/n$,
  \[
\inf_{\B^+_{R/2}}(U+V)\ge C \|U+V\|_{L^{p_0}(t^{1-2\sigma},\B^+_{3R/4})}.
\]
where $C$ is a positive constant and depends only on $n$, $\sigma$, $R$, $p_0$, $\|U+V\|_ {W^{1,2}(t^{1-2\sigma}, \B^{+}_{R})}$.
  \item[(iii)] If $U$, $V\in W^{1,2}(t^{1-2\sigma}, D)$ is a weak solution of \eqref{HA}, then there exists $\alpha\in (0,1)$ such that
\[
\|U\|_{ C^\alpha(\overline{\B^+_{R/2}})}+\| V\|_{ C^\alpha(\overline{\B^+_{R/2}})}\leq C\| U+V\|_{ L^\infty(\B^+_{R/2})},
\]
where $C$ is a positive constant and depends only on $n$, $\sigma$, $R$, $\|U+V\|_ {W^{1,2}(t^{1-2\sigma}, \B^{+}_{R})}$.
\end{description}
\end{prop}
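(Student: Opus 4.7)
The plan is to reduce the coupled system to a scalar problem for $W := U + V$ and then invoke the standard Moser/De~Giorgi-Nash machinery for the degenerate elliptic operator with the $A_2$-weight $t^{1-2\sigma}$. Summing the two interior inequalities (which are linear in $\nabla$) immediately gives $\mathrm{div}(t^{1-2\sigma}\nabla W)\geq$ (resp.\ $\leq$, $=$) $0$. On the boundary, the exponent identity $\frac{2\sigma}{n-2\sigma}+\frac{n}{n-2\sigma}=\frac{n+2\sigma}{n-2\sigma}$ together with the elementary inequality $a^p b^q \leq (a+b)^{p+q}$ yields
\[
\al_1 U^{\frac{n+2\sigma}{n-2\sigma}} + \al_2 V^{\frac{n+2\sigma}{n-2\sigma}} + \beta\bigl(U^{\frac{2\sigma}{n-2\sigma}}V^{\frac{n}{n-2\sigma}} + V^{\frac{2\sigma}{n-2\sigma}}U^{\frac{n}{n-2\sigma}}\bigr) \leq C\,W^{\frac{n+2\sigma}{n-2\sigma}},
\]
with $C = C(\al_1, \al_2, \beta)$. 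Hence in the subsolution case $W$ satisfies $\pa W/\pa\nu^\sigma \leq C W^{(n+2\sigma)/(n-2\sigma)}$, and in the supersolution case the right-hand side of the system is already nonnegative so $\pa W/\pa\nu^\sigma \geq 0$. The solution case gives both.

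For (i), I would run Moser iteration on the scalar subsolution $W$. Testing the weak formulation with $\eta^2 W_M^{2q-2}W$ (with cutoff $W_M:=\min(W,M)$ and $\eta \in C_c^\infty(\B^+_R \cup \pa'\B^+_R)$), integrating by parts, and invoking the trace-Sobolev inequality (Proposition~\ref{tra}) together with H\"older on the boundary leads to
\[
\|\eta W_M^{q-1}W\|_{L^{2n/(n-2\sigma)}(B_R)}^{2} \leq C(q,\eta)\bigl(1+\|W\|_{L^{2n/(n-2\sigma)}(B_R)}^{4\sigma/(n-2\sigma)}\bigr)\int_{\B^+_R} t^{1-2\sigma}|\nabla(\eta W_M^{q-1}W)|^2\,dX.
\]
Proposition~\ref{tra} once more bounds the critical H\"older factor $\|W\|_{L^{2n/(n-2\sigma)}}^{4\sigma/(n-2\sigma)}$ by $\|W\|_{W^{1,2}(t^{1-2\sigma})}^{4\sigma/(n-2\sigma)}$, which explains the $\|U+V\|_{W^{1,2}}$-dependence of the constant. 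This produces the reverse H\"older step $\|W\|_{L^{2\chi q}}\leq C(q)^{1/q}\|W\|_{L^{2q}}$ for some $\chi>1$; iterating in $q$ and combining with a standard interpolation completes the chain down to arbitrary $p>0$.

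For (ii), since the boundary flux of $W$ is nonnegative, $W$ is a nonnegative supersolution for which the classical Moser iteration with test functions $\eta^2(W+\va)^{2q-1}$ for $q<0$ and for $q\in(0,1/2)$ encounters no critical-growth difficulty; a Bombieri--Giusti crossing lemma bridges the resulting negative- and small-positive-exponent bounds to yield the weak Harnack inequality $\inf W \geq C\|W\|_{L^{p_0}}$ for $p_0$ below the weighted Sobolev threshold, which covers the stated range $p_0 \leq (n+1)/n$. For (iii), I would first use (i) to obtain $\|U\|_{L^\infty}+\|V\|_{L^\infty}<\infty$ on $\B^+_{R/2}$; then each of $U$ and $V$ individually solves $\mathrm{div}(t^{1-2\sigma}\nabla\cdot)=0$ with bounded conormal data on $\pa'\B^+_{R/2}$, and the De~Giorgi-Nash-Moser H\"older estimate for $A_2$-weighted operators (as developed in Jin-Li-Xiong \cite{JLX}) gives the claimed $C^\al$-bounds on $U$ and $V$ separately.

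The main obstacle is (i): the critical exponent $(n+2\sigma)/(n-2\sigma)$ exactly saturates the trace-Sobolev inequality, so a direct Moser test cannot absorb the boundary nonlinearity. The absorption requires control of $\|W\|_{L^{2n/(n-2\sigma)}}$, which is in turn dominated by $\|W\|_{W^{1,2}(t^{1-2\sigma})}$ via Proposition~\ref{tra}; this quantitative dependence is precisely what appears in the statement's constant $C$, and it is what distinguishes this critical problem from the subcritical Moser theory.
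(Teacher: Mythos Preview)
Your reduction to the scalar function $W=U+V$ and the subsequent appeal to Moser/De~Giorgi--Nash theory for the $A_2$-weighted operator is exactly the paper's strategy, and your treatment of (ii) and (iii) matches the paper's (indeed, for (ii) the paper keeps the lower bound $\partial W/\partial\nu^\sigma\ge \beta u^{\frac{2\sigma}{n-2\sigma}}v^{\frac{2\sigma}{n-2\sigma}}(u+v)$ and verifies its coefficient lies in $L^q$, $q>n/(2\sigma)$, whereas your simpler observation $\partial W/\partial\nu^\sigma\ge 0$ already suffices for the weak Harnack step).

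The one place the paper proceeds differently is the critical absorption in (i). Rather than run Moser iteration directly on the nonlinear boundary condition, the paper linearizes by writing $\partial W/\partial\nu^\sigma\le a(x)W$ with $a=\alpha_1 u^{\frac{4\sigma}{n-2\sigma}}+\beta u^{\frac{2\sigma}{n-2\sigma}}v^{\frac{2\sigma}{n-2\sigma}}+\alpha_2 v^{\frac{4\sigma}{n-2\sigma}}$, observes via the trace inequality that $a\in L^{n/(2\sigma)}$ (exactly critical), and then invokes \cite[Lemma~2.8]{JLX} as a black box to bootstrap $u+v$ into $L^{2(n+1)/(n-2\sigma)}$ on a slightly smaller ball. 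This pushes $a$ into $L^q$ with $q>n/(2\sigma)$, after which \cite[Proposition~2.6]{JLX} gives the local maximum principle directly. Your sketch instead absorbs the critical term by controlling $\|W\|_{L^{2n/(n-2\sigma)}}$ through $\|W\|_{W^{1,2}}$; this is viable, but you should say explicitly how a merely \emph{bounded} (not small) $L^{n/(2\sigma)}$-norm of $a$ permits absorption---the standard device is to split $a=a_1+a_2$ with $\|a_1\|_{L^{n/(2\sigma)}}$ small and $a_2\in L^\infty$, absorb the $a_1$-part into the left, and treat the $a_2$-part as subcritical. The paper's bootstrap via \cite[Lemma~2.8]{JLX} is effectively a packaged version of the same mechanism.
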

\begin{proof}
We define $u(x):=U(x,0)$, $v(x):=V(x,0)$. Via a straightforward calculation, we have
\begin{equation}\label{DAXIAO}
\begin{split}
&\alpha_1u^{\frac{n+2\sigma}{n-2\sigma}}+\beta u^{\frac{2\sigma}{n-2\sigma}}v^{\frac{n}{n-2\sigma}}+\alpha_2v^{\frac{n+2\sigma}{n-2\sigma}}+\beta v^{\frac{2\sigma}{n-2\sigma}}u^{\frac{n}{n-2\sigma}}\\
\leq&\alpha_1u^{\frac{4\sigma}{n-2\sigma}}(u+v)+\beta u^{\frac{2\sigma}{n-2\sigma}}v^{\frac{2\sigma}{n-2\sigma}}v+\alpha_2v^{\frac{4\sigma}{n-2\sigma}}(u+v)+\beta v^{\frac{2\sigma}{n-2\sigma}}u^{\frac{2\sigma}{n-2\sigma}}u\\
=&(\alpha_1u^{\frac{4\sigma}{n-2\sigma}}+\beta u^{\frac{2\sigma}{n-2\sigma}}v^{\frac{2\sigma}{n-2\sigma}}+\alpha_2v^{\frac{4\sigma}{n-2\sigma}})(u+v).
\end{split}
\end{equation}
It follows that if $(U,V)$ is a weak subsolution of \eqref{HA}, then $U+V$ is a weak subsolution of
\[
\begin{cases}
\begin{aligned}
&\mathrm{div}(t^{1-2\sigma} \nabla (U+V))\geq0 &\quad&\mbox{in }\ \ \B^{+}_{R},\\
&\frac{\partial (U+V)}{\partial \nu^\sigma}\leq(\alpha_1u^{\frac{4\sigma}{n-2\sigma}}+\beta u^{\frac{2\sigma}{n-2\sigma}}v^{\frac{2\sigma}{n-2\sigma}}+\alpha_2v^{\frac{4\sigma}{n-2\sigma}})(u+v) &\quad&\mbox{on }\  \pa'\B^{+}_{R},
\end{aligned}
\end{cases}
\]
Combining with the trace inequality (Proposition \ref{tra}), we have $u$, $v\in L^{\frac{2n}{n-2\sigma}} (B_{R})$ . As a result,
\[
\alpha_1u^{\frac{4\sigma}{n-2\sigma}}+\beta u^{\frac{2\sigma}{n-2\sigma}}v^{\frac{2\sigma}{n-2\sigma}}+\alpha_2v^{\frac{4\sigma}{n-2\sigma}}\in L^{\frac{n}{2\sigma}} (B_{R}),
\]
Then by \cite[Lemma 2.8]{JLX}, for $0<r<R$, we have
\[
\|u+v\|_{ L^{p} (B_{r})}\le C \|U+V\|_{W(t^{1-2\sigma},\B^+_{R})},
\]
where $C$ is a positive constant and  depends only on $n$, $\sigma$, $r$,  $\|u\|_ {L^{\frac{2n}{n-2\sigma}} (B_{R})}$, $\|v\|_ {L^{\frac{2n}{n-2\sigma}} (B_{R})}$ and  $p=\frac{2(n+1)}{n-2\sigma}$.
As an easy consequence of  $u>0$, $v>0$, we have
\[
u\in L^{p} (B_{r}),\quad v\in L^{p} (B_{r}).
\]
It follows that
\[
u^{\frac{4\sigma}{n-2\sigma}},\ v^{\frac{4\sigma}{n-2\sigma}},\ u^{\frac{2\sigma}{n-2\sigma}}v^{\frac{n}{n-2\sigma}}\in L^{q} (B_{r}),
\]
where $q>\frac{n}{2\sigma}$ and $0<r<R$.

%
%
Then by \cite[Proposition 2.6]{JLX}, we conclude that for any $p>0$,
\[
\sup_{\B^+_{R/2}}(U+V)\le C \|U+V\|_{L^p(t^{1-2\sigma},\B^+_{3R/4})}.
\]
Here $C$ depends only on $n$, $\sigma$, $R$, $\|u^{\frac{4\sigma}{n-2\sigma}}\|_ {L^q (B_{R})}$, $\|v^{\frac{4\sigma}{n-2\sigma}}\|_ {L^q (B_{R})}$,
$\|u^{\frac{2\sigma}{n-2\sigma}}v^{\frac{n}{n-2\sigma}}\|_{ L^{q} (B_{r})}$.

On the other hand, as the above argument, if $(U,V)$ is a weak supersolution of \eqref{HA}, then $U+V$ is a weak supersolution of
\[
\begin{cases}
\begin{aligned}
&\mathrm{div}(t^{1-2\sigma} \nabla (U+V))=0 &\quad&\mbox{in }\ \ \B^{+}_{R},\\
&\frac{\partial (U+V)}{\partial \nu^\sigma}\geq\beta u^{\frac{2\sigma}{n-2\sigma}}v^{\frac{2\sigma}{n-2\sigma}}(u+v) &\quad&\mbox{on }\  \pa'\B^{+}_{R},
\end{aligned}
\end{cases}
\]
and
\[
\beta u^{\frac{2\sigma}{n-2\sigma}}v^{\frac{2\sigma}{n-2\sigma}}\in L^q (B_{R}),
\]
With the help of  \cite[Proposition 2.6]{JLX}, we obtain that for all $0<p_0\leq (n+1)/n$
\[
\inf_{\B^+_{R/2}}(U+V)\ge C \|U+V\|_{L^{p_0}(t^{1-2\sigma},\B^+_{3R/4})},
\]
Here $C$ depends only on $n$, $\sigma$, $R$,  $\|u^{\frac{2\sigma}{n-2\sigma}}v^{\frac{2\sigma}{n-2\sigma}}\|_{L^q (B_{R})}$.

Consider
\[
\begin{cases}
\begin{aligned}
&\mathrm{div}(t^{1-2\sigma} \nabla U)=0 &\quad&\mbox{in }\ \ \B^{+}_{R},\\
&\frac{\partial U}{\partial \nu^\sigma}=\alpha_1u^{\frac{n+2\sigma}{n-2\sigma}}+\beta u^{\frac{2\sigma}{n-2\sigma}}v^{\frac{n}{n-2\sigma}} &\quad&\mbox{on }\ \ \pa'\B^{+}_{R},
\end{aligned}
\end{cases}
\]
and
\[
\begin{cases}
\begin{aligned}
&\mathrm{div}(t^{1-2\sigma} \nabla V)=0 &\quad&\mbox{in }\ \ \B^{+}_{R},\\
&\frac{\partial V}{\partial \nu^\sigma}=\alpha_2v^{\frac{n+2\sigma}{n-2\sigma}}+\beta v^{\frac{2\sigma}{n-2\sigma}}u^{\frac{n}{n-2\sigma}} &\quad&\mbox{on }\ \ \pa'\B^{+}_{R},
\end{aligned}
\end{cases}
\]
respectively. With the help of  \cite[Proposition 2.6]{JLX}, there exists $\al\in (0,1)$ such that
\[
\|U\|_{ C^\alpha(\overline{\B^+_{R/2}})}+\| V\|_{ C^\alpha(\overline{\B^+_{R/2}})}\leq C\| U+V\|_{ L^\infty(\B^+_{R/2})},
\]
where $C$ is a positive constant and depends only on $n$, $\sigma$, $R$, $\|U+V\|_ {W^{1,2}(t^{1-2\sigma}, \B^{+}_{R})}$.
\end{proof}

\subsection{B\^{o}cher's Theorem}
\begin{prop}\label{Bocher}
Suppose that $U\in W_{\rm loc}^{1,2}(t^{1-2\sigma}, \R^{n+1}_+)$ with $U>0$ in $\R^{n+1}_+$ is a weak solution of
\be\label{Bocher1}
\begin{cases}
\begin{aligned}
&\mathrm{div}(t^{1-2\sigma} \nabla U)=0 &\quad&\mbox{\rm in }\ \ \R^{n+1}_+,\\
&\frac{\partial U}{\partial \nu^\sigma}=0 &\quad&\mbox{\rm on }\  \partial'\R^{n+1}_{+}\backslash \{0\}.
\end{aligned}
\end{cases}
\ee
Then
\[
U(\xi)=A|\xi|^{2\sigma-n}+B,
\]
where $A$ and $B$ are  nonnegative constants.
\end{prop}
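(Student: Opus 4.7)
The strategy is to reduce this half-space Neumann problem to a classical B\^ocher-type statement for a degenerate elliptic operator on $\R^{n+1}\setminus\{0\}$ via even reflection. Since $\frac{\pa U}{\pa\nu^\sigma}=0$ on $\pa'\R^{n+1}_+\setminus\{0\}$, the even extension $\tilde U(x,t):=U(x,|t|)$ is a nonnegative weak solution of $\mathrm{div}(|t|^{1-2\sigma}\nabla\tilde U)=0$ in $\R^{n+1}\setminus\{0\}$. The weight $|t|^{1-2\sigma}$ lies in the Muckenhoupt class $A_2$ for $\sigma\in(0,1)$, so the weighted potential theory of Fabes-Kenig-Serapioni (Harnack inequality, H\"older estimates, weak maximum principle) applies to $\tilde U$. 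Moreover, direct differentiation shows that $\Phi(\xi):=|\xi|^{2\sigma-n}$ solves the same equation away from $0$ and will play the role of the fundamental solution of the weighted operator.

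Following the classical B\^ocher proof, I will capture the singular part by setting
\[
A:=\lim_{r\to 0^+}r^{n-2\sigma}\inf_{\pa\B_r}\tilde U.
\]
The weighted Harnack inequality ensures this limit exists in $[0,+\infty]$ and is comparable to the corresponding supremum and to weighted spherical averages. Finiteness of $A$ follows by comparison with $\Phi$: if $A=+\infty$, then for arbitrarily large $M$ we would have $\tilde U\geq M\Phi$ on some small sphere around $0$, and the weighted maximum principle on the annulus between that sphere and a fixed large sphere (where $\tilde U$ is bounded) would propagate the inequality outward, contradicting the boundedness of $\tilde U$ on the outer sphere for $M$ large.

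Now set $W:=\tilde U-A\Phi$. Then $W$ is a weak solution of the same weighted equation on $\R^{n+1}\setminus\{0\}$, is nonnegative by the choice of $A$, and is bounded near $0$. A removable-singularity theorem for $A_2$-weighted divergence-form equations lets $W$ extend as a weak solution to all of $\R^{n+1}$, and a global Liouville theorem obtained by iterating Harnack at arbitrarily large scales forces the nonnegative solution $W$ to be a constant $B\geq 0$. Restricting $\tilde U=A\Phi+B$ to $\overline{\R^{n+1}_+}$ gives the claim with $A,B\geq 0$. The main obstacle is the weighted potential-theory backbone---Harnack, removable singularity, and Liouville for the operator $\mathrm{div}(|t|^{1-2\sigma}\nabla\,\cdot\,)$---and verifying that, after reflection and after extracting the singular part, the relevant functions belong to $W^{1,2}_{\rm loc}(|t|^{1-2\sigma},\R^{n+1}\setminus\{0\})$ and $W^{1,2}_{\rm loc}(|t|^{1-2\sigma},\R^{n+1})$ respectively, so that the quoted weak-form results genuinely apply. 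A secondary subtlety is the choice of $\inf_{\pa\B_r}\tilde U$ rather than a weighted average in the definition of $A$: this makes the nonnegativity of $W$ automatic and streamlines the Liouville step.
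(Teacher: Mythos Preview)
Your reflection-plus-$A_2$-theory route is viable and yields the result, but it is considerably heavier than what the paper actually does. The paper never constructs $A$ by hand or invokes Fabes--Kenig--Serapioni; instead it quotes a ready-made \emph{local} B\^ocher lemma (Lemma~4.10 of Jin--Li--Xiong, \emph{J.\ Eur.\ Math.\ Soc.}~16 (2014)), which on each half-ball $\B_R^+$ already furnishes the decomposition $U=A|\xi|^{2\sigma-n}+V$ with $A\ge 0$ and $V$ a \emph{regular} weak solution of the homogeneous Neumann problem on all of $\B_R^+$. The only remaining work is then elementary: apply the minimum principle to $V$ on $\B_R^+$ (positivity of $U$ gives $V(Y)\ge \min_{|\xi|=R}V(\xi)>-A R^{2\sigma-n}$), send $R\to\infty$ to get $V\ge 0$ on $\R^{n+1}_+$ with zero Neumann data on the full boundary, and quote a Liouville theorem to conclude $V\equiv B\ge 0$. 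So the paper offloads the removable-singularity analysis entirely to the cited lemma and finishes in a few lines, whereas your approach rebuilds that machinery from scratch via even reflection and weighted potential theory.

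One point in your sketch does need tightening: the assertion that $A:=\lim_{r\to 0^+}r^{n-2\sigma}\inf_{\partial\B_r}\tilde U$ \emph{exists} does not follow from Harnack alone, which only controls the sup/inf ratio on each fixed sphere and says nothing about monotonicity in $r$. A safer definition is $A:=\sup\{a\ge 0:\tilde U\ge a\Phi\text{ on }\B_1\setminus\{0\}\}$; this makes $W=\tilde U-A\Phi\ge 0$ automatic, and maximality of $A$ combined with Harnack then gives $W=o(\Phi)$ near the origin. Note also that ``bounded near $0$'' is the \emph{output} of the removable-singularity step (via local boundedness for $A_2$-weighted equations), not its input; the hypothesis you actually feed in is $W\ge 0$ together with $W=o(\Phi)$.
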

\begin{proof}
For any fixed $Y\in \R^{n+1}_{+}$ and every $R> |Y|$,  by \cite[Lemma 4.10]{JLX} we may write
\[
U(\xi)=A|\xi|^{2\sigma-n}+V(\xi)\quad\quad\mbox{in }\ \B^+_{R}\backslash\{0\},
\]
where $A$ is a   nonnegative constant and $V$ satisfies
\[
\begin{cases}
\begin{aligned}
&\mathrm{div}(t^{1-2\sigma} \nabla V)=0 &\quad&\mbox{\rm in }\ \ \B^+_{R},\\
&\frac{\partial V}{\partial \nu^\sigma}=0 &\quad&\mbox{\rm on }\  \partial'\B^+_{R}.
\end{aligned}
\end{cases}
\]
With the help of the minimum principle to $V$ on $\B^+_{R}$, we have
\[
V(Y)\geq \min\{V(\xi),\ \ |\xi|=R\}>-A|R|^{2\sigma-n},
\]
where the positive of $U$ gives the second inequality. Let $R\rightarrow +\infty$, we see that $V$ is nonnegative and satisfies
\[
\begin{cases}
\begin{aligned}
&\mathrm{div}(t^{1-2\sigma} \nabla V)=0 &\quad&\mbox{\rm in }\ \ \R^{n+1}_+,\\
&\frac{\partial V}{\partial \nu^\sigma}=0 &\quad&\mbox{\rm on }\  \partial'\R^{n+1}_{+}.
\end{aligned}
\end{cases}
\]
By Liouville theorem \cite{Bogdan}, it follows that $V$ is a nonnegative constant. This finishes the proof of this proposition.
\end{proof}
\section{Liouville Theorem}\label{F4}
For any $x\in \R^{n}$, assume that there exists  a positive constant $\lambda=\lambda(x)$  so that
\be\label{eq:bigger1}
u_{x,\lda}(y)= u(y),\ \ v_{x,\lda}(y)= v(y)\quad\mbox{in }\  \R^{n}\backslash \{x\}.
\ee
Then by Proposition \ref{CHANG1}, we complete the proof of Theorem \ref{thm1}. On the other hand, \eqref{eq:bigger1} can be reduced to
\be\label{eq:bigger2}
U_{X,\lda}(\xi)=U(\xi),\ \ V_{X,\lda}(\xi)= V(\xi)\quad\mbox{in }\  \R_+^{n+1}\backslash \{X\},
\ee
where $U$, $V$  are defined as \eqref{DD}, $X:=(x,0)$.

In order to prove \eqref{eq:bigger2}, we introduce
\[
\bar \lda(x):=\sup \big\{\mu>0\ \big|\ U_{X,\lda}(\xi)\leq U(\xi),V_{X,\lda}(\xi)\leq V(\xi)\quad\mbox{in }\  \R^{n+1}_{+}\backslash \B^+_\lda(X),~\forall~ \lambda\in(0,\mu)\big\}.
\]
First, we need the following lemma to guarantee that the set over which we are taking the supremum is non-empty and then $\bar\lambda(x)$ is well defined.
\begin{lem}\label{lemma1}
For all $x\in\R^n$ there exists $\lda_0(x)>0$ such that for all $\lda\in(0,\lda_0(x))$,
\be\label{eq:bigger}
U_{X,\lda}(\xi)\le U(\xi),\ \ V_{X,\lda}(\xi)\le V(\xi)\quad\mbox{\rm in }\ \R^{n+1}_{+}\backslash \B^+_\lda(X).
\ee
\rm{It is easy to see that either} $\bar \lda(x)<\infty$ \rm{or} $\bar \lda(x)=\infty$.
\end{lem}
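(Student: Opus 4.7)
After translating so that $X = 0$, I would verify \eqref{eq:bigger} separately on an inner shell $\mathcal{A} := \{\lambda < |\xi| \le r_0\} \cap \R^{n+1}_+$ and an outer region $\mathcal{E} := \{|\xi| \ge r_0\} \cap \R^{n+1}_+$, following the ``start of the moving sphere'' strategy of Li--Zhu and Jin--Li--Xiong. The strict positivity $U(0) = u(x) > 0$, $V(0) = v(x) > 0$ together with the continuity of $U,V$ on $\overline{\R^{n+1}_+}$ (available from the Poisson representation \eqref{DD}) furnishes an $r_0 > 0$ and constants $0 < c_1 \le C_1$ with $c_1 \le U, V \le C_1$ on $\overline{\B^+_{r_0}}$. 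This $r_0$ is fixed once and for all, and the desired $\lambda_0(x)$ will be the common upper bound coming from the two regional estimates.

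\emph{Inner shell.} Write $\xi = s\theta$ with $s \in (\lambda, r_0]$ and $\theta \in S^n \cap \overline{\R^{n+1}_+}$, so the inverted point $\xi^\ast = (\lambda^2/s)\theta$ also lies in $\overline{\B^+_{r_0}}$. Introducing the auxiliary
\[
\phi(s,\theta) := s^{(n-2\sigma)/2} U(s\theta),
\]
the inequality $U_{0,\lambda}(\xi) \le U(\xi)$ is algebraically equivalent to $\phi(\lambda^2/s,\theta) \le \phi(s,\theta)$; since $\lambda^2/s < \lambda < s$, it suffices to prove $\partial_s \phi(s,\theta) \ge 0$ on $(0, r_0]$, uniformly in $\theta$. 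Direct differentiation gives
\[
\partial_s \phi(s,\theta) = s^{(n-4\sigma)/2}\left[\tfrac{n-2\sigma}{2}\, U(s\theta) + s^{\sigma}\,\theta \cdot \nabla U(s\theta)\right],
\]
and the bracket is bounded below by $\tfrac{n-2\sigma}{2}c_1 - s^{\sigma}|\nabla U(s\theta)|$. Shrinking $r_0$ if needed so that the correction is absorbed---using the boundary H\"older regularity of $U$ coming from Proposition \ref{Harnack inequality}(iii) and rescaling---this bracket stays strictly positive on $(0, r_0]$, so $\phi(\cdot,\theta)$ is monotone as required, and the same argument handles $V$.

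\emph{Outer region and main obstacle.} For $\xi \in \mathcal{E}$ and $\lambda < r_0$ one has $\xi^\ast \in \overline{\B^+_{\lambda}} \subset \overline{\B^+_{r_0}}$, whence $U(\xi^\ast) \le C_1$ and therefore $U_{0,\lambda}(\xi) \le C_1 (\lambda/|\xi|)^{n-2\sigma}$. Combining the continuity-plus-positivity lower bound for $U$ on each $\overline{\B^+_R}\setminus\B^+_{r_0}$ with an at-infinity lower bound of the form $U(\xi) \ge c|\xi|^{-(n-2\sigma)}$---which I would extract from \eqref{DD} by localizing the integral to a ball where $u$ is strictly positive and estimating $\mathcal{P}_\sigma$ from below---one can choose $\lambda_0(x) > 0$ so small that $U_{0,\lambda} \le U$ holds throughout $\mathcal{E}$ for all $\lambda \in (0, \lambda_0)$; an identical chain of estimates delivers $V_{0,\lambda} \le V$. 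The main technical obstacle is the uniform smallness $s^{\sigma}|\nabla U(s\theta)| \to 0$ needed on the inner shell: for $\sigma < 1/2$ the normal derivative $\partial_t U$ may blow up at $\partial' \R^{n+1}_+$, so one cannot appeal to a naive $C^1$-bound and must instead deploy the weighted-H\"older estimate of Proposition \ref{Harnack inequality}(iii) applied to balls of radius comparable to $s$, followed by rescaling.
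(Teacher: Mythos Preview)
Your inner-shell argument via monotonicity of $\phi(s,\theta)=s^{(n-2\sigma)/2}U(s\theta)$ is a legitimate alternative to the paper's route, which instead runs a narrow-domain energy estimate (multiply by $(U_{X,\lambda}-U)^+$, use trace and H\"older inequalities, and exploit the coupling via inequalities of the form \eqref{budengshi1}--\eqref{budengshi2}). Your approach is in fact simpler here because it treats $U$ and $V$ separately and needs no structure of the nonlinearity. Two points, though: your derivative is miscopied---the correct expression is
\[
\partial_s\phi(s,\theta)=s^{\frac{n-2\sigma}{2}-1}\Big[\tfrac{n-2\sigma}{2}\,U(s\theta)+s\,\theta\cdot\nabla U(s\theta)\Big],
\]
so the correction term is $s\,\theta\cdot\nabla U$, not $s^\sigma|\nabla U|$; and the control you actually need is on the \emph{radial} derivative, which splits as $s\theta'\cdot\nabla_xU+s\theta_{n+1}\partial_tU$, the second piece being $O((s\theta_{n+1})^{2\sigma})$ since $t^{1-2\sigma}\partial_tU\in C(\overline{\R^{n+1}_+})$. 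With this correction the inner-shell step goes through.

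The real gap is in the outer region. You propose to extract $U(\xi)\ge c|\xi|^{-(n-2\sigma)}$ from the Poisson representation \eqref{DD} by restricting the integral to a ball where $u$ is bounded below. That does not work: localizing gives
\[
U(x,t)\ \ge\ c_0\int_{B_R}\mathcal{P}_\sigma(x-y,t)\,dy\ \sim\ \frac{t^{2\sigma}}{|X|^{n+2\sigma}}\qquad(|X|\ \text{large}),
\]
which degenerates as $t\to0$ and in particular says nothing about $u(x)=U(x,0)$ at infinity. The uniform lower bound you need genuinely uses that $U$ is a positive supersolution of the degenerate problem (equivalently, $(-\Delta)^\sigma u\ge0$): one compares $U$ with the explicit barrier $\phi(\xi)=(\mu/|\xi-X|)^{n-2\sigma}\inf_{\partial''\B^+_\mu(X)}U$, which solves the homogeneous equation with zero conormal data, and applies the maximum principle on $\R^{n+1}_+\setminus\B^+_\mu(X)$. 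This is exactly what the paper does (see \eqref{9} and its derivation), and it is the missing ingredient in your outer-region step. Once you replace the Poisson-kernel heuristic by this comparison argument, your proof is complete.
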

Now we shall prove
\begin{lem}\label{lemma2}
If $\bar \lda(x)<+\infty$ for some $x\in \R^n$, ~then ~$U_{X,\bar\lda(x)}(\xi)=U(\xi)$,~$V_{X,\bar\lda(x)}(\xi)=V(\xi)$ in $\R^{n+1}_{+}\backslash\{X\}$.
\end{lem}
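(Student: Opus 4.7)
The plan is the standard moving-sphere contradiction, adapted to the coupled extended system in $\R^{n+1}_+$. By definition of $\bar\lambda(x)$ and continuity, at $\lambda=\bar\lambda:=\bar\lambda(x)$ we already have $U_{X,\bar\lambda}\le U$ and $V_{X,\bar\lambda}\le V$ in $\R^{n+1}_+\setminus\overline{\B^+_{\bar\lambda}(X)}$. I would argue by contradiction, assuming that at least one of the two identities fails.

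First, I would set $W_1:=U-U_{X,\bar\lambda}$ and $W_2:=V-V_{X,\bar\lambda}$ and note that, by the conformal invariance mentioned after Theorem \ref{thm1}, the Kelvin reflection $(U_{X,\bar\lambda},V_{X,\bar\lambda})$ satisfies the same extended system as $(U,V)$ in its domain. Therefore $\mathrm{div}(t^{1-2\sigma}\nabla W_i)=0$ in $\R^{n+1}_+\setminus\overline{\B^+_{\bar\lambda}(X)}$, while on the boundary $\partial'\R^{n+1}_+\setminus\overline{B_{\bar\lambda}(x)}$ one can write the nonlinear differences as
\[
\frac{\partial W_1}{\partial\nu^\sigma}=a_{11}W_1+a_{12}W_2,\qquad
\frac{\partial W_2}{\partial\nu^\sigma}=a_{21}W_1+a_{22}W_2,
\]
with nonnegative coefficients $a_{ij}\in L^\infty_{\mathrm{loc}}$ coming from the mean-value theorem applied to the pure power and product nonlinearities. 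With $W_i\ge 0$, the strong maximum principle for the degenerate extension operator together with the Hopf boundary lemma (as used in \cite{JLX,CJSX}) forces each $W_i$ to be either identically zero or strictly positive in the open exterior domain. A short algebraic check handles the mixed case: if, say, $W_1\equiv 0$, then $u=u_{x,\bar\lambda}>0$ on $\partial'$, and subtracting the two boundary equations gives $\beta\,u^{2\sigma/(n-2\sigma)}\bigl(v^{n/(n-2\sigma)}-v_{x,\bar\lambda}^{n/(n-2\sigma)}\bigr)=0$, whence $W_2\equiv 0$ too. So to reach a contradiction I may assume $W_1,W_2>0$ strictly in $\R^{n+1}_+\setminus\overline{\B^+_{\bar\lambda}(X)}$.

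Next, I would upgrade this strict positivity to a quantitative lower bound on a fixed annular shell $A_R:=\B^+_{2R}(X)\setminus\B^+_{\bar\lambda}(X)$ with $R>\bar\lambda+2$. The Hopf lemma on the spherical cap $\partial''\B^+_{\bar\lambda}(X)$ and on $\partial'\B^+_{\bar\lambda}(X)\setminus\{X\}$ yields a constant $c_0>0$ with
\[
W_i(\xi)\ge c_0\bigl(|\xi-X|-\bar\lambda\bigr)\qquad\text{for }\xi\in A_R,\;i=1,2.
\]
On the other hand, a direct computation of $\partial_\lambda U_{X,\lambda}$ at $\lambda=\bar\lambda$ shows $U_{X,\bar\lambda+\varepsilon}-U_{X,\bar\lambda}=O(\varepsilon)$ uniformly in $A_R$, and similarly for $V$; hence for $\varepsilon$ small enough we keep $U_{X,\bar\lambda+\varepsilon}\le U$ and $V_{X,\bar\lambda+\varepsilon}\le V$ throughout $A_R$.

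Finally, I would control the exterior region $|\xi-X|\ge 2R$. Since $U$ is positive and continuous on $\R^{n+1}_+$, Proposition \ref{Harnack inequality}(ii) gives a uniform lower bound for $U$ on any compact subset of $\partial'\R^{n+1}_+\setminus\{0\}$ and, together with the Poisson representation \eqref{DD}, an estimate $U(\xi)\ge c_1|\xi|^{2\sigma-n}$ for $|\xi|$ large; the same holds for $V$. In contrast, $U_{X,\lambda}(\xi)=(\lambda/|\xi-X|)^{n-2\sigma}U\bigl(X+\lambda^2(\xi-X)/|\xi-X|^2\bigr)$ is locally uniformly bounded in $\lambda\in[\bar\lambda,\bar\lambda+1]$ and decays like $|\xi-X|^{2\sigma-n}$ with a coefficient that is strictly smaller than $c_1$ once $R$ is chosen large; so by enlarging $R$ we obtain $U_{X,\bar\lambda+\varepsilon}\le U$, $V_{X,\bar\lambda+\varepsilon}\le V$ in $\{|\xi-X|\ge 2R\}$. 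Combining the two regions contradicts the maximality of $\bar\lambda$, proving the lemma. The main obstacle I anticipate is the exterior estimate: one has to rule out, in the fractional coupled setting, that the $\beta$-cross terms spoil the clean $|\xi-X|^{2\sigma-n}$ matching between $U$ and its Kelvin reflection, which is what forces the careful use of the Poisson extension of $(u,v)$ and the sharp constant comparison.
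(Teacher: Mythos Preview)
Your exterior step is where the argument breaks. You want to compare a lower bound $U(\xi)\ge c_1|\xi|^{2\sigma-n}$ with an upper bound on $U_{X,\lambda}$ and conclude that the latter has a strictly smaller leading coefficient ``once $R$ is chosen large''. But the coefficient of $U_{X,\lambda}$ at infinity is $\lambda^{n-2\sigma}U(X)$, which \emph{increases} with $\lambda$, and nothing you have said prevents it from exceeding $c_1$; indeed, by the very definition of $\bar\lambda$ these coefficients match at $\lambda=\bar\lambda$ (compare the proof of Lemma~\ref{lemma3}), so for $\lambda>\bar\lambda$ the inequality you need goes the wrong way. Enlarging $R$ does nothing: both sides scale identically in $|\xi-X|$. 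The correct way to control the far region is to apply the maximum principle not to $U$ but to $W_i=U-U_{X,\bar\lambda}$ itself. Since $W_i>0$ and $\partial W_i/\partial\nu^\sigma\ge 0$, Proposition~\ref{PP} gives $W_i(\xi)\ge c_R|\xi-X|^{2\sigma-n}$ outside $\B^+_{2R}(X)$, while a direct computation shows $|U_{X,\lambda}(\xi)-U_{X,\bar\lambda}(\xi)|\le C\varepsilon\,|\xi-X|^{2\sigma-n}$ uniformly for $\lambda\in[\bar\lambda,\bar\lambda+\varepsilon]$; subtracting gives the desired inequality for $\varepsilon$ small. This is exactly what the paper does in the analogous Lemma~\ref{lemma4.2} (see \eqref{S}--\eqref{WE}).

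For Lemma~\ref{lemma2} the paper actually avoids the exterior region altogether by passing to the equivalent interior formulation $U\le U_{X,\lambda}$ in $\B^+_\lambda(X)\setminus\{X\}$ and splitting that bounded set into three pieces: a neighbourhood of the artificial singularity $X$ handled via Proposition~\ref{prop:liminf}, a compact middle shell handled by uniform continuity, and a thin annulus near the sphere handled by the narrow-domain integral estimate from Lemma~\ref{lemma1}. Your Hopf-based treatment of the annulus $A_R$ is also incomplete as written: the bound $W_i\ge c_0(|\xi-X|-\bar\lambda)$ combined with a generic $O(\varepsilon)$ perturbation does not close near $|\xi-X|=\bar\lambda+\varepsilon$, since both terms are of order $\varepsilon$ there; you would need either the normal-derivative matching of Li--Zhang \cite{LZ} or the narrow-domain technique the paper uses.
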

\begin{lem}\label{lemma3}
Either $\bar \lda(x)<+\infty$ for all $x\in \R^n$, ~or $\bar \lda(x)=+\infty$ for all $x\in \R^n$.
\end{lem}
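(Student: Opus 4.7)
The plan is to prove the dichotomy by contradiction: suppose there exist $x_0, x_1 \in \R^n$ with $\bar\lda(x_0) < +\infty$ and $\bar\lda(x_1) = +\infty$. The strategy is to use the first hypothesis, via Lemma \ref{lemma2}, to extract a sharp rate of decay at infinity for $u$ and $v$, and then to show that this decay rate is incompatible with the second hypothesis, which forces $u_{x_1,\lda}\le u$ for arbitrarily large $\lda$.

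First I would apply Lemma \ref{lemma2} at $x_0$ to obtain $U_{X_0,\bar\lda(x_0)} \equiv U$ and $V_{X_0,\bar\lda(x_0)} \equiv V$ in $\R^{n+1}_+ \setminus \{X_0\}$, with $X_0 = (x_0,0)$. Restricting to the boundary $\{t=0\}$ yields the Kelvin-type identity
\[
u(y) = \left(\frac{\bar\lda(x_0)}{|y-x_0|}\right)^{n-2\sigma} u\!\left(x_0 + \frac{\bar\lda(x_0)^2(y-x_0)}{|y-x_0|^2}\right), \qquad y\neq x_0,
\]
and the analogous identity for $v$. Letting $|y|\to\infty$, the inverted point tends to $x_0$, and the Hölder continuity provided by Proposition \ref{Harnack inequality}(iii) gives the sharp asymptotic
\[
u(y) = \bar\lda(x_0)^{n-2\sigma}\, u(x_0)\, |y|^{2\sigma - n}\,(1 + o(1)), \quad |y|\to\infty,
\]
with an analogous expression for $v$.

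Next I would confront this with the hypothesis $\bar\lda(x_1) = +\infty$, which says that for every $\lda>0$, $u_{x_1,\lda}(y) \le u(y)$ in $\R^n\setminus B_\lda(x_1)$. For any fixed $\lda$, the definition of $u_{x_1,\lda}$ together with continuity of $u$ at $x_1$ yields $u_{x_1,\lda}(y) = \lda^{n-2\sigma} u(x_1)\, |y|^{2\sigma - n}(1+o(1))$ as $|y|\to\infty$. Dividing the inequality by $|y|^{2\sigma - n}$ and taking the limit $|y|\to\infty$ produces
\[
\lda^{n-2\sigma}\, u(x_1) \;\le\; \bar\lda(x_0)^{n-2\sigma}\, u(x_0),
\]
which must hold for every $\lda>0$. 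Letting $\lda\to\infty$ contradicts $u(x_1) > 0$, and the proof is complete.

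The argument is essentially mechanical once Lemma \ref{lemma2} is in hand. The only delicate point I expect is justifying the limit $|y|\to\infty$ in the Kelvin identity cleanly — specifically, ensuring that the $o(1)$ remainder from continuity of $u$ at $x_0$ (respectively $x_1$) is uniform enough that the inequality $u_{x_1,\lda}(y)\le u(y)$ passes to the asymptotic constants. This is not a serious obstacle given the Hölder regularity from Proposition \ref{Harnack inequality}(iii), but it is the one step where care is needed to avoid sweeping technicalities under the rug.
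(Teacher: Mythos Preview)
Your proposal is correct and follows essentially the same route as the paper: both arguments compare the quantity $\limsup_{|y|\to\infty}|y|^{n-2\sigma}u(y)$ (the paper works with $U$ in $\R^{n+1}_+$, you with $u$ on $\R^n$, which is immaterial), showing it is finite via Lemma~\ref{lemma2} at the point where $\bar\lda<\infty$, and then forcing $\lda^{n-2\sigma}u(x_1)$ to lie below it for all $\lda$ from the hypothesis $\bar\lda(x_1)=\infty$. One small remark: your appeal to Proposition~\ref{Harnack inequality}(iii) for H\"older continuity is unnecessary, since $u\in C^2(\R^n)$ is already part of the standing assumptions in \eqref{I8}, so continuity of $u$ at $x_0$ and $x_1$ is immediate and the passage to the limit needs no further justification.
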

\begin{lem}\label{lemma4}
If $\bar \lda(x)=+\infty$ for all $x\in \R^n$, ~then ~$U(x,t)=U(0,t)$,~$ V(x,t)=V(0,t)$ for all $(x,t)\in \R_+^{n+1}$.
\end{lem}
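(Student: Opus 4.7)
\medskip

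\noindent\textbf{Proof proposal.} The plan is to exploit the hypothesis that the Kelvin sphere inequality persists for \emph{all} radii by letting the center go to infinity in a prescribed way, so that the inversion degenerates into a reflection across a vertical hyperplane. Fix an arbitrary base point $x_0\in\R^n$ and an arbitrary unit vector $e\in\R^n$, and set
\[
x_R:=x_0+Re,\qquad X_R:=(x_R,0),\qquad \lambda_R:=R,
\]
with $R\to\infty$. The assumption $\bar\lda(x)=+\infty$ for every $x$ gives
\[
U_{X_R,\lambda_R}(\xi)\le U(\xi),\qquad V_{X_R,\lambda_R}(\xi)\le V(\xi),
\]
for every $\xi=(y,t)\in\overline{\R^{n+1}_+}\setminus \B^+_{\lambda_R}(X_R)$.

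Next I would carry out the asymptotic expansion of the Kelvin transform. Writing $w:=y-x_0$, one has
\[
|\xi-X_R|^2=R^{2}-2R\,(w\cdot e)+|w|^{2}+t^{2},
\]
so $(\lambda_R/|\xi-X_R|)^{n-2\sigma}\to 1$ as $R\to\infty$. A direct expansion of
\[
X_R+\frac{\lambda_R^{2}(\xi-X_R)}{|\xi-X_R|^{2}}
\]
in powers of $1/R$ shows that its $\R^n$-component converges to $y-2(w\cdot e)e$ (the reflection of $y$ across the hyperplane $H_{x_0,e}:=\{y':(y'-x_0)\cdot e=0\}$) while the $t$-component converges to $t$. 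Since $U$ and $V$ are continuous on $\overline{\R^{n+1}_+}$ (they are classical solutions of the extension problem, hence continuous up to $\partial'\R^{n+1}_+$ where they equal the continuous functions $u$, $v$), the pointwise limit yields
\[
\lim_{R\to\infty}U_{X_R,\lambda_R}(\xi)=U\bigl(y-2(w\cdot e)e,\,t\bigr),
\]
and the analogous identity for $V$. One also checks that for any fixed $\xi$ with $w\cdot e\le 0$ one has $|\xi-X_R|^{2}\ge R^{2}$ for all sufficiently large $R$, so $\xi\notin \B^+_{\lambda_R}(X_R)$ and the inequality above may be passed to the limit.

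This produces
\[
U\bigl(y-2(w\cdot e)e,\,t\bigr)\le U(y,t),\qquad V\bigl(y-2(w\cdot e)e,\,t\bigr)\le V(y,t),
\]
for every $\xi=(y,t)\in\overline{\R^{n+1}_+}$ with $(y-x_0)\cdot e\le 0$. Replacing $y$ by its reflection $y-2(w\cdot e)e$ reverses the sign of $(y-x_0)\cdot e$ and gives the opposite inequality, so equality holds and $U,V$ are invariant under the reflection across $H_{x_0,e}$. Since $x_0\in\R^n$ and the unit vector $e$ were arbitrary, $U$ and $V$ are invariant under the reflection across \emph{every} vertical hyperplane, and a composition of two such reflections across parallel hyperplanes produces an arbitrary horizontal translation. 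Consequently $U(x,t)=U(0,t)$ and $V(x,t)=V(0,t)$ for all $(x,t)\in\R^{n+1}_+$, which is the claim.

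The main technical point is the asymptotic expansion showing that the inversion centered at $X_R$ with radius $\lambda_R=R$ converges to a reflection across $H_{x_0,e}$; beyond this, everything reduces to keeping track of which $\xi$ lie outside $\B^+_{\lambda_R}(X_R)$ for large $R$ and to invoking continuity of $U$ and $V$ up to $\partial'\R^{n+1}_+$. No additional compactness or regularity beyond what is already established in Section \ref{F1} is required.
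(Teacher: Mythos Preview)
Your argument is correct and is essentially the same as the paper's, only more explicit: the paper disposes of the lemma in one line by invoking the calculus proposition from Li--Zhang \cite[Lemma 11.3]{LZ} (recorded here as Proposition~\ref{CHANG2}), whose statement is precisely that the Kelvin inequality for all centers $X\in\partial\R^{n+1}_+$ and all radii $\lambda>0$ forces $f(y,t)=f(0,t)$. What you have written is the standard proof of that proposition---letting the center run to infinity along a ray so that the inversion degenerates to a planar reflection---so there is no genuine methodological difference.

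One small wording issue: to obtain the \emph{reverse} inequality you cannot simply ``replace $y$ by its reflection'' in the inequality you already have, since the reflected point lies on the wrong side of $H_{x_0,e}$. What makes the argument work is that the same limiting procedure with $e$ replaced by $-e$ yields $U(y-2(w\cdot e)e,t)\le U(y,t)$ on the complementary half-space $\{(y-x_0)\cdot e\ge 0\}$; applying this to the reflected point then gives the opposite inequality and hence equality. This is what you intended, and it does not affect the validity of the proof.
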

By Lemma \ref{lemma4}, which implies that both $u$ and $v$ are positive constants. By
\[
(-\Delta)^\sigma u=\alpha_1u^{\frac{n+2\sigma}{n-2\sigma}}+\beta u^{\frac{2\sigma}{n-2\sigma}}v^{\frac{n}{n-2\sigma}}\quad\mbox{in }\ \ \R^{n},
\]
it is clearly to see that this case never happens. With the help of the conclusion of Lemma \ref{lemma2}, we can prove Lemma \ref{lemma3}. Combining Lemma \ref{lemma4} with the above argument, we can
obtain that for all $x\in \R^n$, $\bar \lda(x)<+\infty$. Using Lemma \ref{lemma2} again, \eqref{eq:bigger1} follows.

Now, in turn, we start to prove Lemma \ref{lemma1}-\ref{lemma4} .

\begin{proof}[Proof of Lemma \ref{lemma1}] For simplicity, we prove \eqref{eq:bigger} only for $U$, since the proof on $V$ is similar. We are going to show that there exist $\mu$ and $\lambda_0(x)$ satisfying $0<\lambda_0(x)<\mu$, ~which may depend on ~$x$,~ such that for $\lda \in (0,\lambda_0(x))$,
\be\label{N}
U_{X,\lambda}(\xi)\leq U(\xi)\quad\mbox{in }\  \R^{n+1}_+\backslash\overline{\B^+_\mu(X)},
\ee
and then
\be\label{Q}
U_{X,\lda }(\xi)\leq U(\xi)\quad\mbox{in }\ \overline{\B^+_\mu(X)}\backslash\B^+_\lda(X).
\ee

As the first step, we are going to prove \eqref{N}. For every $0<\mu<10$, which may depend on $x$, we can choose \be\label{IDA}
\lambda_0(x)=\mu\min\left\{\left(\inf\limits_{\partial'' \B^+_{\mu}(X)} U/\sup\limits_{ \B^+_{\mu}(X)}U\right)^{\frac{1}{n-2\sigma}} ,\left(\inf\limits_{\partial'' \B^+_{\mu}(X)} V/\sup\limits_{ \B^+_{\mu}(X)}V\right)^{\frac{1}{n-2\sigma}} \right\}.
\ee
Let
\[
\phi(\xi):=\left(\frac{\mu}{|\xi-X|}\right)^{n-2\sigma}\inf\limits_{\partial'' \B^+_{\mu}(X)} U,
\]
which satisfies
\[
\begin{cases}
\begin{aligned}
&\mathrm{div}(t^{1-2\sigma}\nabla \phi)=0&\quad& \mbox{in }\ \R^{n+1}_+\setminus \B^+_{\mu}(X),\\
&\frac{\partial \phi}{\partial \nu^\sigma} =0&\quad& \mbox{on }\ \partial'(\R^{n+1}_+\setminus \B^+_{\mu}(X))
 \end{aligned}
\end{cases}
\]
 and
\[
\phi(\xi)=\inf\limits_{\partial'' \B^{+}_{\mu}(X)} U\leq U(\xi)\quad\mbox{on }\ \partial'' \B^{+}_{\mu}(X).
\]
It is easy to see that
\[
\lim_{\xi\rightarrow+\infty}\phi(\xi)=0\leq \lim_{\xi\rightarrow+\infty}U(\xi).
\]
By the standard maximum principle argument,  we conclude that
\be\label{9}
\left(\frac{\mu}{|\xi-X|}\right)^{n-2\sigma}\inf_{\partial'' \B^{+}_{\mu}(X)} U\leq U(\xi)\quad\mbox{in }\ \R^{n+1}_+\setminus \B^+_{\mu}(X).
\ee
Then for all $\xi\in\R^{n+1}_+\setminus \B^+_{\mu}(X)$ and $\lambda\in(0,\lda_0(x))$, it follows from \eqref{IDA} and \eqref{9}
\[
\begin{split}
 U_{X,\lda }(\xi)&= \left(\frac{\lda}{|\xi-X|}\right)^{n-2\sigma}U(X+\frac{\lda^2(\xi-X)}{|\xi-X|^2})\\
&\leq \left(\frac{\lda_0(x)}{|\xi-X|}\right)^{n-2\sigma}\sup\limits_{\B^{+}_{\mu}(X)}U\\
&\leq \left(\frac{\mu}{|\xi-X|}\right)^{n-2\sigma}\inf\limits_{\partial ''\B^{+}_{\mu}(X)}U\\
&\leq U(\xi).
\end{split}
\]

As the second step, we are going to obtain \eqref{Q}. For all $\lda\in (0,\lambda_0(x))$, $\xi\in\pa'' \B^+_{\mu}(X)$, we have $X+\frac{\lda^2(\xi-X)}{|\xi-X|^2}\in \B^+_{\mu}(X)$. It follows that
\begin{equation*}
\begin{split}
U_{X,\lda }(\xi)&=\left(\frac{\lambda}{|\xi-X|}\right)^{n-2\sigma}U\left(X+\frac{\lda^2(\xi-X)}{|\xi-X|^2}\right)
\leq\left(\frac{\lambda_0}{\mu}\right)^{n-2\sigma}\sup\limits_{\B_{\mu}^+(X)}U\\
&\leq \inf_{\partial ''{\B_{\mu}^+(X)}}U
\leq U(\xi).
\end{split}
\end{equation*}
The above inequality, together with
\[
U_{X,\lda }(\xi)=U(\xi)\quad\quad\mbox{on}\  \ \ \pa'' \B^+_{\lambda}(X),
\]
implies that for all $\lda \in(0,\lambda_0(x))$,
\be\label{W}
U_{X,\lda }(\xi)\leq U(\xi)\quad\quad \mbox{on }\ \ \ \partial ''\B^+_{\mu}(X)\cup \partial ''\B^+_{\lda}(X).
\ee

We will make use of the ¡°narrow domain technique¡± of Berestycki and Nirenberg from \cite{BN}, and show that,  for sufficiently small $\mu$, that for $\lda \in(0,\lambda_0(x))$
$$U_{X,\lda }(\xi)\leq U(\xi)\quad\quad\quad\mbox{in}\ \ \ \B^+_{\mu}(X)\backslash \B^+_{\lda}(X).$$
For simplicity, we denote $D:=\B_{\mu}^+(X)\backslash \B_{\lda}^+(X)$. A direct calculation gives that
\begin{equation}\label{4}
\begin{cases}
\mathrm{div}(t^{1-2\sigma}\nabla (U_{X,\lda }-U))=0& \text{in}\quad D,\\
\frac{\partial(U_{X,\lda }-U)}{\partial \nu^\sigma}
&\\=\alpha_1(u_{x,\lda }^{\frac{n+2\sigma}{n-2\sigma}}-u^{\frac{n+2\sigma}{n-2\sigma}})+\beta (u_{x,\lda }^{\frac{2\sigma}{n-2\sigma}}v_{x,\lda }^{\frac{n}{n-2\sigma}}- u^{\frac{2\sigma}{n-2\sigma}}v^{\frac{n}{n-2\sigma}})
\quad &\text{on}\quad \partial 'D.
\end{cases}
\end{equation}
Let $(U_{X,\lda }-U)^+:=\max(0,U_{X,\lda }-U)$ which equals to $0$ on $\pa''D$. Multiplying the first equation in \eqref{4} by $(U_{X,\lda }-U)^+$ and integrating by parts in $D$.  With the help of the Mean Value Theorem, we have
\begin{equation}\label{HH}
\begin{split}
&\int_{D} t^{1-2\sigma}|\nabla(U_{X,\lda }-U)^+|^2\\
=&\int_{D}\left[\alpha_1(u_{x,\lda }^{\frac{n+2\sigma}{n-2\sigma}}-u^{\frac{n+2\sigma}{n-2\sigma}})+\beta (u_{x,\lda }^{\frac{2\sigma}{n-2\sigma}}v_{x,\lda }^{\frac{n}{n-2\sigma}}- u^{\frac{2\sigma}{n-2\sigma}}v^{\frac{n}{n-2\sigma}})\right](u_{x,\lda }-u)^+\\
\leq&\int_{\partial 'D}\alpha_1\frac{n+2\sigma}{n-2\sigma}u_{x,\lda }^{\frac{4\sigma}{n-2\sigma}}\left((u_{x,\lda }-u)^{+}\right)^2\\
&+\int_{\partial 'D}\beta (u_{x,\lda }^{\frac{2\sigma}{n-2\sigma}}v_{x,\lda }^{\frac{n}{n-2\sigma}}- u^{\frac{2\sigma}{n-2\sigma}}v^{\frac{n}{n-2\sigma}})(u_{x,\lda }-u)^+\\
=:&I_1+I_2.
\end{split}
\end{equation}

First, we are going to estimate the term $I_1$. Using the H\"{o}lder inequality and the trace inequality, we have
\begin{equation}\label{I_1}
\begin{split}
I_1=&\alpha_1\frac{n+2\sigma}{n-2\sigma}\int_{\partial 'D}u_{x,\lda }^{\frac{4\sigma}{n-2\sigma}}\left((u_{x,\lda }-u)^{+}\right)^2\\
\leq&\alpha_1\frac{n+2\sigma}{n-2\sigma}\left(\int_{\partial 'D}u_{x,\lda }^{\frac{2n}{n-2\sigma}}\right)^{\frac{2\sigma}{n}}\left(\int_{\partial 'D}\left((u_{x,\lda }-u)^+\right)^{\frac{2n}{n-2\sigma}}\right)^{\frac{n-2\sigma}{n}}\\
\leq&C_1\left(\int_{B_{\mu}(x)}u^{\frac{2n}{n-2\sigma}}\right)^{\frac{2\sigma}{n}}\int_{D} t^{1-2\sigma}|\nabla(U_{X,\lda }-U)^+|^2.
\end{split}
\end{equation}
Here $C_1$ is a constant depending on $n$, $\sigma$, $\alpha_1$. Let us make an explanation for the last inequality.

Making a change of variables, for $\ y\in B_{\mu}(x)\backslash B_{\lda}(x)$,  define
\[
y_{\lambda}:=x+\frac{\lda^2(y-x)}{|y-x|^2}\in B_{\lda}(x)\subset B_{\mu}(x).
\]
Via a direct calculation, we have
\[
\frac{\partial y_{\lambda}}{\partial y}=\lda^2\left(\frac{1}{|y_{\lambda}-x|^2}I_n-2\frac{(y_{\lambda}-x)\otimes (y_{\lambda}-x)}{|y_{\lambda}-x|^4}\right),
\]
where $(y_{\lambda}-x)\otimes (y_{\lambda}-x)=((y_{\lambda}-x)_i(y_{\lambda}-x)_j)$. It follows that
\[
\det\left(\frac{\partial{y_\lambda}}{\partial y}\right)=\lda^{2n}|(y_{\lambda}-x)|^{-2n}.
\]
Therefore, we deduce  that
\begin{equation}\label{fact1}
\begin{split}
\int_{B_{\mu}(x)\backslash B_{\lda}(x)}u_{x,\lda }(y)^{\frac{2n}{n-2\sigma}}dy&\leq \int_{B_{\mu}(x)}\bigg(\frac{|y_{\lambda}-x| }{\lda}\bigg)^{2n}u(y_{\lambda})^{\frac{2n}{n-2\sigma}}\bigg(\frac{|y_{\lambda}-x| }{\lda}\bigg)^{-2n}dy_{\lambda}\\
&= \int_{B_{\mu}(x)}u(y_{\lambda})^{\frac{2n}{n-2\sigma}}dy_{\lambda}.
\end{split}
\end{equation}
Next, let us estimate the term $I_2$.  We claim that
\[
I_2\leq C_3\int_{\partial 'D}\left((u_{x,\lda }-u)^++(v_{x,\lda }-v)^+\right)(u_{x,\lda }-u)^+.
\]
Here $C_3$ is a constant depending on $n$, $\beta$,  $\sigma$, $x$. The detailed proof of this claim is provided  in Proposition \ref{QL}. Then through the H\"{o}lder inequality, it is not difficult to get
\begin{equation*}
\begin{split}
I_2
\leq&C_3|B_{\mu}(x)|^{\frac{2\sigma}{n}}\left(\int_{\partial 'D}\left((u_{x,\lda }-u)^+\right)^{\frac{2n}{n-2\sigma}}\right)^{\frac{n-2\sigma}{n}}\\
&+C_3|B_{\mu}(x)|^{\frac{2\sigma}{n}}\left(\int_{\partial 'D}\left((u_{x,\lda }-u)^+\right)^{\frac{2n}{n-2\sigma}}\right)^{\frac{n-2\sigma}{2n}}\left(\int_{\partial 'D}\left((v_{x,\lda }-v)^+\right)^{\frac{2n}{n-2\sigma}}\right)^{\frac{n-2\sigma}{2n}},
\end{split}
\end{equation*}
With the help of the trace inequality, we obtain
\begin{equation}\label{I2}
\begin{split}
I_2\leq&C\mu^{2\sigma}\int_{D} t^{1-2\sigma}|\nabla(U_{X,\lda }-U)^+|^2\\
&+C\mu^{2\sigma}\left(\int_{D} t^{1-2\sigma}|\nabla(U_{X,\lda }-U)^+|^2\right)^{\frac{1}{2}}\left(\int_{D} t^{1-2\sigma}|\nabla(V_{X,\lda }-V)^+|^2\right)^{\frac{1}{2}}.
\end{split}
\end{equation}
Here $C$ is a constant depending on $n$, $\beta$,  $\sigma$, $x$.

Define
\[
\widetilde{C_1}:=C_1\left(\int_{B_{\mu}(x)}u ^{\frac{2n}{n-2\sigma}}\right)^{\frac{2\sigma}{n}}+C\mu^{2\sigma}.
\]
From \eqref{HH}, \eqref{I_1} and \eqref{I2}, we have
\begin{equation*}
\begin{split}
(1-\widetilde{C_1})\|\nabla(U_{X,\lda }-U)^+\|_{L^2(t^{1-2\sigma},D)}
\leq C\mu^{2\sigma}\|\nabla(V_{X,\lda }-V)^+\|_{L^2(t^{1-2\sigma},D)},
\end{split}
\end{equation*}

By the fact that  $u\in C^{2}(\R^n)$, we can  choose $\mu$ sufficient small such that
\[
\frac{C\mu^{2\sigma}}{(1-\widetilde{C_1})}\leq\frac{1}{2}.
\]
It follows that
\begin{equation}\label{budengshi1}
\|\nabla(U_{X,\lda }-U)^+\|_{L^2(t^{1-2\sigma},D)}
\leq\frac{1}{2}\|\nabla(V_{X,\lda }-V)^+\|_{L^2(t^{1-2\sigma},D)},
\end{equation}
where $D:=\B_{\mu}^+(X)\backslash \B_{\lda}^+(X)$.
By the same argument, we can also obtain that
\begin{equation}\label{budengshi2}
\|\nabla(V_{X,\lda }-V)^+\|_{L^2(t^{1-2\sigma},D)}
\leq\frac{1}{2}\|\nabla(U_{X,\lda }-U)^+\|_{L^2(t^{1-2\sigma},D)}.
\end{equation}
Combining \eqref{budengshi1} with \eqref{budengshi2},~we have
$$\nabla(U_{X,\lda }(\xi)-U(\xi))^+=0\quad\quad \mbox{in  }\ \ D.$$
By \eqref{W}, we conclude that
\[
(U_{X,\lda }(\xi)- U(\xi))^+=0\quad\quad\mbox{on }\ \ \ D,
\]
which implies  \eqref{Q} and  Lemma \ref{lemma1} is proved.
\end{proof}
\begin{proof}[Proof of Lemma \ref{lemma2}]
From the definition of $\bar \lda(x)$, it is obviously that for all $\lambda\in(0,\bar\lda(x))$,
\be
U_{X,\lda}(\xi)\leq U(\xi),\ \ V_{X,\lda}(\xi)\leq V(\xi)\quad \mbox{in }\ \R^{n+1}_+\backslash \B^+_{\lda}(X).
\ee
For $\xi\in \R^{n+1}_+\backslash \B^+_{\lda}(X)$, let
$$\xi_{\lambda}:=X+\frac{\lda^2(\xi-X)}{|\xi-X|^2},$$
which implies that
\[
U(\xi_{\lambda})\leq\left(\frac{\lda }{|\xi_{\lambda}-X|}\right)^{n-2\sigma}U\left(X+\frac{\lda^2(\xi_{\lambda}-X)}{|\xi_{\lambda}-X|^2}\right)=U_{X,\lda}(\xi_{\lambda})\quad \mbox{in }\ \overline{\B^+_{\lda}(X)}\backslash \{X\}.
\]
Then we can say
\be\label{M}
U(\xi)\leq U_{X,\lda}(\xi)\quad \mbox{in }\ \B^+_{\lda}(X)\backslash \{X\}.
\ee
We prove Lemma \ref{lemma2} by contradiction. Without loss of generality, we suppose $U_{X,\bar\lda(x)}\not\equiv U$. Next, we will show that there exists a positive constant $\va$ such that for all $\lda\in(\bar\lda(x),\bar\lda(x)+\va)$,
\be\label{Zhong}
U(\xi)\leq U_{X,\lda}(\xi)\quad \mbox{in }\ \B^+_{\lda}(X)\backslash \{X\},
\ee
which contradicts with the definition of $\bar\lda(x)$.  For this purpose, we first claim that  if $U_{X,\bar\lda(x)}\not\equiv U$, it follows that $V_{X,\bar\lda(x)}\not\equiv V$.

In fact, if $V_{X,\bar\lda(x)}\equiv V$, by a direct calculation gives that
\[
0=\frac{\partial(V_{X,\bar\lda(x) }-V)}{\partial \nu^\sigma}(x,0)=\beta (u_{x,\bar\lda(x) }^{\frac{2\sigma}{n-2\sigma}}-u^{\frac{2\sigma}{n-2\sigma}})v^{\frac{n}{n-2\sigma}}\neq0,
\]
which is a contradiction.

Now, let us divide the region  $\B^+_{\lda}(X)\backslash \{X\}$ into three parts,
\[
\begin{array}{ll}
K_1:=\left\{\xi\in \B^+_{\lda}(X)\ \big| \ 0<|\xi-X|< \delta_1\right\},\vspace{0.2cm}\\
K_2:=\left\{\xi\in \B^+_{\lda}(X)\ \big| \ \delta_1\leq|\xi-X|\leq\bar\lda-\delta_2\right\},\vspace{0.2cm}\\
K_3:=\left\{\xi\in \B^+_{\lda}(X)\ \big| \ \bar\lda-\delta_2\leq|\xi-X|\leq\lda\right\},
\end{array}
\]
where $\delta_1$,$\delta_2$ will be fixed later. To obtain \eqref{Zhong} it  suffices to prove that it holds respectively on $K_1$, $K_2$, $K_3$.

Combining
\begin{equation*}
\begin{cases}
\mathrm{div}(t^{1-2\sigma}\nabla (U_{X,\bar\lda(x) }-U)=0& \text{in} \ \B_{\bar\lda(x)}^+(X),\\
\frac{\partial (U_{X,\bar\lda(x) }-U)}{\partial \nu^\sigma} \geq0 &\text{on}\ \partial'\B_{\bar\lda(x)}^+(X)\setminus \{X\},
\end{cases}
\end{equation*}
with the fact that $U_{X,\bar\lda(x)}\not\equiv U$, in view of  the Harnack inequality \cite[Proposition 2.6]{JLX}, which implies that
\[
U_{X,\bar\lda(x)}(\xi)-U(\xi)>0\quad \mbox{in } \ \B_{\bar\lda(x)}^+(X)\backslash\{X\}.
\]
By Proposition \ref{prop:liminf}, we have
\[
\liminf_{\xi\to X} (U_{X,\bar\lda}(\xi)-U(\xi))>0.
\]
As a result, there exist two positive constants $\delta_{1}$ and $C_1$ such that
\be\label{O1}
U_{X,\bar\lda(x)}(\xi)-U(\xi)>C_1\quad\text{ in } \ K_{1}.
\ee
Choose $\varepsilon_{1}< \delta_{1}$ small such that for all $\lda\in(\bar\lda(x),\bar\lda(x)+\varepsilon_{1})$,
\be\label{O2}
U_{X,\bar\lda(x)}\left(X+\frac{\overline{\lambda}^2(x)}{\lambda^2}(\xi-X)\right)-U_{X,\bar\lda(x)}(\xi)>-C_1/2\quad\text{ in } \ K_{1},
\ee
and
\be\label{O3}
\left(\frac{\bar\lda(x)}{\bar\lda(x)+\varepsilon_{1}}\right)^{n-2\sigma}\left(U(\xi)+C_1/2\right)\geq U(\xi)+C_1/4.
\ee
By a directly calculations, we have
\[
U_{X,\lda }(\xi)= \left(\frac{\bar\lda(x)}{\lda}\right)^{n-2\sigma}U_{X,\bar\lda(x)}\left(X+\frac{\overline{\lambda}^2(x)}{\lambda^2}(\xi-X)\right)
\]
It follows that
\begin{equation*}
\begin{split}
U_{X,\lda }(\xi)&\geq\left(\frac{\bar\lda(x)}{\bar\lda(x)+\varepsilon_{1}}\right)^{n-2\sigma}U_{X,\bar\lda(x)}\left(X+\frac{\overline{\lambda}^2(x)}{\lambda^2}(\xi-X)\right)\\
&\geq \left(\frac{\bar\lda(x)}{\bar\lda(x)+\varepsilon_{1}}\right)^{n-2\sigma}\left(U\left(\xi\right)+C_1/2\right)\\
&\geq U(\xi)+C_1/4,
\end{split}
\end{equation*}
where \eqref{O1}, \eqref{O2} is used in the second inequality and \eqref{O3} is used in the last inequality.
Consequently, for any $ \lda \in(\bar \lda(x),\bar \lda(x)+\varepsilon_{1})$,
\be\label{H112}
U_{X,\lda}(\xi)\geq U(\xi)\ \quad\text{ in }\  K_{1}.
\ee

For $\delta_2$ small, which will be fixed later. Since $K_{2}$ is compact, there exists a positive constant $C_2$ such that
\[
U_{X,\bar\lda(x)}(\xi)-U(\xi)>C_2 \ \quad\text{ in } \  K_{2}.
\]
By the uniform continuity of $U$  on compact sets, there exists a positive constant $\va_{2}>0$ small such that for all $ \lambda\in(\bar \lda(x),\bar \lda(x)+\varepsilon_{2})$,
\[
U_{X,\lda}(\xi)-U_{X,\bar\lda(x)}(\xi)>-C_2/2\ \quad\text{ in }  \ K_{2}.
\]
Hence, for all $ \lambda\in(\bar \lda(x),\bar \lda(x)+\varepsilon_{2})$,
\be\label{H2}
U_{X,\lda}(\xi)-U(\xi)>C_2/2\ \quad\text{ in } \ K_{2}.
\ee

Now let us focus on the region $K_3$.
Using the narrow domain technique as that in Lemma \ref{lemma1}, we can choose $\delta_2$
small ( notice that we can choose $\va$ as small as we want less then $\varepsilon_1$ and $\va_2$ such that for $ \lda \in(\bar \lda(x),\bar \lda(x)+\va)$,
\be\label{H3b}
U(\xi)\leq U_{X,\lda}(\xi),\ \ V(\xi)\leq V_{X,\lda}(\xi) \ \quad\text{ in } \ K_3.
\ee
Together with \eqref{H112}, \eqref{H2} and \eqref{H3b}, we can see that the moving sphere procedure may continue beyond $\bar \lda(x)$ where we reach a contradiction. We can see that the moving sphere procedure may continue beyond $\bar \lda(x)$ for $U$ and $V$ where we reach a contradiction. Then we complete the proof of Lemma 2.
\end{proof}
\begin{proof}[Proof of Lemma \ref{lemma3}]
For some $x$, if $\bar \lda(x)=+\infty$, then for all $\lda>0$,
\[
U_{X, \lda}(\xi)\leq U(\xi)\quad \mbox{in }\  \R_+^{n+1}\backslash \B_{\lda}^+(X),
\]
it follows that
\[
\limsup_{\xi\rightarrow+\infty}|\xi|^{n-2\sigma}U_{X, \lda}(\xi)\leq \limsup_{\xi\rightarrow+\infty}|\xi|^{n-2\sigma}U(\xi).
\]
It is not hard to see that,
\[
\begin{split}
\limsup_{\xi\rightarrow+\infty}|\xi|^{n-2\sigma}U_{X, \lda}(\xi)=&\limsup_{\xi\rightarrow+\infty}\frac{|\xi|^{n-2\sigma}}{|\xi-X|^{n-2\sigma}}\lambda^{n-2\sigma}U\left( X+\frac{\lambda^2(\xi-X)}{|\xi-X|^2}\right)
=\lambda^{n-2\sigma}U(X),
\end{split}
\]
which implies that  for any $\lda>0$,
\[
\lambda^{n-2\sigma}U(X)\leq\limsup_{\xi\rightarrow+\infty}|\xi|^{n-2\sigma}U(\xi).
\]
Therefore,
\[
\limsup_{\xi\rightarrow+\infty}|\xi|^{n-2\sigma}U(\xi)=+\infty.
\]
On the other hand, if for some $\widehat{x}$, $\bar \lda(\widehat{x})<+\infty$, then by Lemma \ref{lemma2},
$U_{X,\bar\lda(\widehat{x})}(\xi)=U(\xi)$, it follows that
\[
\limsup_{\xi\rightarrow+\infty}|\xi|^{n-2\sigma}U(\xi)=\limsup_{\xi\rightarrow+\infty}|\xi|^{n-2\sigma}U_{\widehat{X}, \bar\lda(\widehat{x})}(\xi)= \bar\lda^{n-2\sigma}(\widehat{x})U(\widehat{X})<+\infty.
\]
It is a contradiction.
\end{proof}
\begin{proof}[Proof of Lemma \ref{lemma4}]
By Proposition \ref{CHANG2}, we can prove this lemma.

\end{proof}
\section{Global Solutions with an Isolated Singularity}\label{F3}
\subsection{Proof of Theorem \ref{ZHUYAO}}
Since the method for $u$ and $v$ are same, we just give a proof for $u$. To prove that $u$ is radial and monotonically decreasing radially it suffices to show that $u$ is symmetrical about any  hyperplane which through the origin and it is monotone decreasing along the normal direction. Without loss of generality, let us prove that $u$ is symmetric about the hyperplane $\{y_1=0\}$ and it is monotone decreasing along the $y_1$ axis.

For all $x\in\R^n\setminus\{0\}$, assume that for any $\lambda \in (0,|x|)$,
\be\label{eq:small stop1}
u_{x,\lda}(y)\le u(y),\quad v_{x,\lda}(y)\le v(y)\quad\quad\mbox{in }\ \ \R^n\backslash (B_\lda(x)\cup\{0\}).
\ee
Let $t$, $s\in \R$  satisfy $t\leq s$, $t+s>0$ and $m>\max \{s,\frac{st}{s+t}\}$, then $0<(m-s)(m-t)<m^2$. With the help of \eqref{eq:small stop1}, choosing $y=te_1$, $x=me_1$ and  $\lda^2=(m-s)(m-t)$, we have
\[
\left(\frac{\sqrt{(m-s)(m-t)}}{m-t}\right)^{n-2\sigma}u\left[\left(m+\frac{(m-s)(m-t)}{t-m}\right)e_1\right]\leq u(te_1),
\]
where the unite vector $e_1=(1,0,\cdots,0)\in\R^n$. That is,
\[
\left(\frac{m-s}{m-t}\right)^{\frac{n-2\sigma}{2}}u\left(se_1\right)\leq u(te_1).
\]
After sending $m\rightarrow \infty$, it follows that
\be\label{R}
u(se_1)\leq u(te_1).
\ee
For $s>0$, let $t\rightarrow -s$, we obtain that
\be\label{DAN}
u(se_1)\leq u(-se_1).
\ee
By the same argument, choosing $y=-te_1$, $x=-me_1$ and  $\lda^2=(m-s)(m-t)$, it follows that
\[
u(-se_1)\leq u(-te_1).
\]
For $s>0$, let $t\rightarrow -s$, we have
\be\label{DAN1}
u(-se_1)\leq u(se_1).
\ee
Combining \eqref{DAN} with \eqref{DAN1}, we deduce that $u$ is symmetric about the hyperplane $\{y_1=0\}$.

On the other hand, for $0<t<s$, a consequence of  \eqref{R} is that $u$ is monotone decreasing along the axis of $y_1$.

Therefore, to finish the proof of Theorem \ref{ZHUYAO}, we only need to prove \eqref{eq:small stop1}.

\subsection{Proof of \eqref{eq:small stop1}}
To obtain  \eqref{eq:small stop1}, it suffices to prove that for any $x\in\R^n\setminus\{0\}$, $X=(x,0)$, $\lambda >0$, and for any $\lda\in(0,|x|)$
\be\label{33}
U_{X,\lda}(\xi)\le U(\xi),\ \ V_{X,\lda}(\xi)\le V(\xi)\quad\quad \mbox{in}\ \ \R^{n+1}_{+}\backslash \B^+_\lda(X),
\ee
where  $U$, $V$ defined as \eqref{DD}, and $U_{X,\lda }(\xi)$, $V_{X,\lda }(\xi)$ denote the Kelvin transformation of $U$, $V$.

For the sake of \eqref{33}, let us define
\[
\bar \lda(x):=\sup \big\{\lda(x)\in (0, |x|)\ \big|\ U_{X,\lda}(\xi)\leq U(\xi),\ V_{X,\lda}(\xi)\le V(\xi) \ \mbox{in} \ \R^{n+1}_{+}\backslash \B^+_\lda(X),~\forall~ \lambda\in(0, \lambda(x))\big\}.
\]
By the above argument, it is clear to see that we only need to obtain
\[
\bar \lda(x)=|x|.
\]
For this purpose, we need  the following lemma to guarantee that the set over which we are taking the supremum is non-empty and then $\bar\lambda(x)$ is well defined.
\begin{lem}\label{lemma4.1} There exists $\lda_0(x)\in (0,|x|)$ such that  for all $\lda\in(0,\lda_0(x))$,
\be\label{eq:bigger}
U_{X,\lda}(\xi)\le U(\xi),\ \ V_{X,\lda}(\xi)\le V(\xi)\quad\quad \mbox{\rm in}\ \ \R^{n+1}_{+}\backslash \B^+_\lda(X).
\ee
\end{lem}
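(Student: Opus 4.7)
The plan is to mirror the two-step strategy used in the proof of Lemma \ref{lemma1}, with the key difference that the threshold $\mu$ must be chosen so that the ball $\mathcal{B}_\mu^+(X)$ stays away from the singular point $0$. Concretely, I would fix any $\mu \in (0, |x|/2)$, so $\mathrm{dist}(0, \overline{\mathcal{B}_\mu^+(X)}) \geq |x|/2 > 0$ and both $U,V$ are positive and continuous on the compact set $\overline{\mathcal{B}_\mu^+(X)}$. Then I would set
\[
\lambda_0(x) := \mu \min\left\{ \left(\frac{\inf_{\partial'' \mathcal{B}_\mu^+(X)} U}{\sup_{\mathcal{B}_\mu^+(X)} U}\right)^{\frac{1}{n-2\sigma}},\ \left(\frac{\inf_{\partial'' \mathcal{B}_\mu^+(X)} V}{\sup_{\mathcal{B}_\mu^+(X)} V}\right)^{\frac{1}{n-2\sigma}} \right\},
\]
which is strictly between $0$ and $\mu$, and in particular $\lambda_0(x) < |x|$ as required.

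\textbf{Step 1 (outside $\mathcal{B}_\mu^+(X)$).} As in the proof of Lemma \ref{lemma1}, I would introduce the comparison function $\phi(\xi) := (\mu/|\xi-X|)^{n-2\sigma}\inf_{\partial''\mathcal{B}_\mu^+(X)} U$, which solves $\mathrm{div}(t^{1-2\sigma}\nabla \phi) = 0$ in $\mathbb{R}^{n+1}_+ \setminus \mathcal{B}_\mu^+(X)$ with vanishing conormal derivative on the flat boundary, lies below $U$ on $\partial''\mathcal{B}_\mu^+(X)$ by definition, and decays to $0$ at infinity. The maximum/minimum principle then yields $\phi \leq U$ throughout $\mathbb{R}^{n+1}_+ \setminus \mathcal{B}_\mu^+(X)$. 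The choice of $\lambda_0(x)$ above, combined with the standard chain of inequalities \eqref{IDA}–\eqref{9}, then gives $U_{X,\lambda}(\xi) \leq U(\xi)$ whenever $|\xi-X| \geq \mu$ and $\lambda \in (0,\lambda_0(x))$; the symmetric estimate handles $V$. In particular, on $\partial''\mathcal{B}_\mu^+(X)\cup\partial''\mathcal{B}_\lambda^+(X)$ we have $U_{X,\lambda}\leq U$ and $V_{X,\lambda}\leq V$, analogously to \eqref{W}.

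\textbf{Step 2 (thin annulus $D := \mathcal{B}_\mu^+(X) \setminus \mathcal{B}_\lambda^+(X)$).} Here I would apply the Berestycki–Nirenberg narrow-domain technique exactly as in Lemma \ref{lemma1}. Testing the equation satisfied by $U_{X,\lambda}-U$ against the positive part $(U_{X,\lambda}-U)^+$, which vanishes on $\partial''D$ by Step 1, applying the Mean Value Theorem on the nonlinear Neumann datum and invoking Proposition \ref{QL}, and then using the H\"older and trace inequalities, I would recover the pair of contraction estimates \eqref{budengshi1}–\eqref{budengshi2} of the form
\[
\|\nabla(U_{X,\lambda}-U)^+\|_{L^2(t^{1-2\sigma},D)} \leq \tfrac{1}{2}\|\nabla(V_{X,\lambda}-V)^+\|_{L^2(t^{1-2\sigma},D)}
\]
and its counterpart. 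The finiteness of $\|u\|_{L^{2n/(n-2\sigma)}(B_\mu(x))}$ and $\|v\|_{L^{2n/(n-2\sigma)}(B_\mu(x))}$ needed to absorb the leading terms is automatic because $B_\mu(x) \subset \mathbb{R}^n \setminus B_{|x|/2}(0)$, where $u$ and $v$ are classical and bounded. Shrinking $\mu$ further if necessary (and then redefining $\lambda_0(x)$ accordingly) makes the constants contract, forcing $(U_{X,\lambda}-U)^+ \equiv (V_{X,\lambda}-V)^+ \equiv 0$ in $D$.

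\textbf{Main obstacle.} The only genuinely new point compared with Lemma \ref{lemma1} is justifying the minimum principle in Step 1 on the \emph{unbounded} region $\mathbb{R}^{n+1}_+ \setminus \mathcal{B}_\mu^+(X)$, whose flat boundary contains the singular point $0$. This is where one must verify that no spurious minimum of $U-\phi$ can appear at $0$ or at infinity: at $0$, $U\to +\infty$ while $\phi$ remains bounded (since $|0-X|=|x|>0$), so $U-\phi > 0$ in a neighborhood of $0$; at infinity, $\phi \to 0$ while $U\geq 0$. Outside these two regimes, the standard weak-comparison argument on large half-balls $\mathcal{B}_R^+ \setminus \mathcal{B}_\mu^+(X)$ (with $R\to\infty$) applies verbatim. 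Once this is in place, the rest of the argument is a direct transcription of the Liouville-theorem proof, and produces the desired $\lambda_0(x) \in (0,|x|)$.
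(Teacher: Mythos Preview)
Your overall architecture matches the paper's: split into the annular region $\overline{\B^+_\mu(X)}\setminus\B^+_\lda(X)$ (narrow-domain technique, identical to Lemma~\ref{lemma1}) and the exterior region $\R^{n+1}_+\setminus\overline{\B^+_\mu(X)}$ (comparison with $(\mu/|\xi-X|)^{n-2\sigma}\inf_{\partial''\B^+_\mu(X)}U$). You also correctly isolate the one genuinely new issue, namely that the flat boundary of the exterior region now contains the singular point $0$.

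The gap is in how you dispose of that singularity. You assert that ``at $0$, $U\to+\infty$,'' but this is not available from the hypotheses. The standing assumption is only $\limsup_{x\to 0}u(x)+\limsup_{x\to 0}v(x)=\infty$; the paper reduces (without loss of generality) to $\limsup_{x\to 0}u(x)=\infty$, which does \emph{not} force $\lim_{X\to 0}U(X)=+\infty$ (the boundary trace could oscillate), and says nothing at all about $V$ near $0$. So for $V$ your argument gives no control whatsoever at the singular point, and even for $U$ the claimed blow-up is unjustified. Consequently the inequality $\liminf_{\xi\to 0}(U(\xi)-\phi(\xi))\ge 0$ that your maximum-principle step needs is not established.

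The paper does not attempt to show blow-up at $0$; instead it invokes Proposition~\ref{PP}, a maximum principle on $\R^{n+1}_+\setminus\B^+_\mu(X)$ that explicitly permits one isolated exceptional point $X_0=(0,0)$ on $\partial'(\R^{n+1}_+\setminus\B^+_\mu(X))$. That proposition (from \cite{LB}) is precisely what replaces the naive comparison argument of Lemma~\ref{lemma1} in the singular setting, and it applies uniformly to both $U$ and $V$ with no blow-up hypothesis. If you want to avoid quoting it as a black box, the standard route is to compare with $\phi+\varepsilon|\xi|^{2\sigma-n}$ (which does blow up at $0$, is a solution of the homogeneous problem, and dominates any finite behaviour of $U$ near the origin) and then send $\varepsilon\to 0$; your ``$U\to+\infty$'' shortcut does not substitute for this.
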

To get \eqref{33},  we shall prove
\begin{lem}\label{lemma4.2}
\be\label{Weq:gseq}
\bar \lda(x)=|x|.
\ee
\end{lem}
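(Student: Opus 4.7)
The plan is to argue by contradiction: suppose $\bar\lambda(x_0) < |x_0|$ for some $x_0 \in \R^n \setminus \{0\}$. Abbreviate $\bar\lambda := \bar\lambda(x_0)$, $X_0 := (x_0, 0)$, and $D := \R^{n+1}_+ \setminus \overline{\B^+_{\bar\lambda}(X_0)}$. By continuity in $\lambda$, the inequalities $U_{X_0,\bar\lambda} \leq U$ and $V_{X_0,\bar\lambda} \leq V$ persist on $D$. I will push the procedure slightly past $\bar\lambda$ while keeping $\lambda < |x_0|$, contradicting maximality.

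First I apply a strong maximum / Hopf-type argument to the nonnegative $\sigma$-harmonic functions $U - U_{X_0,\bar\lambda}$ and $V - V_{X_0,\bar\lambda}$ on $D$, producing the dichotomy: either (a) both differences are strictly positive on the interior of $D$, or (b) at least one vanishes identically. In case (b), the boundary equations of the system propagate one identity to the other through the coupling term $\beta u^{2\sigma/(n-2\sigma)}v^{n/(n-2\sigma)}$ (mirroring the claim at the start of the proof of Lemma \ref{lemma2}), so both $U \equiv U_{X_0,\bar\lambda}$ and $V \equiv V_{X_0,\bar\lambda}$ on $D$. Since $\bar\lambda < |x_0|$, the origin lies in the interior of $\partial' D$ and the Kelvin transforms $u_{x_0,\bar\lambda}$, $v_{x_0,\bar\lambda}$ are smooth and bounded in a neighborhood of $0 \in \R^n$ (their singularities sit inside $B_{\bar\lambda}(x_0)$ at the Kelvin preimage of $0$). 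This contradicts $\limsup_{y \to 0}(u+v)(y) = \infty$ and rules out (b).

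For case (a), I follow the three-region strategy from the proof of Lemma \ref{lemma2}, adapted to the exterior: a narrow shell $K_1$ of small width $\delta$ adjacent to $\partial''\B^+_{\bar\lambda}(X_0)$, a compact intermediate annulus $K_2$ bounded away from $(0,0)$, from infinity, and from the boundary sphere, and an outer piece $K_3$ combining a neighborhood of $(0,0)$ and a neighborhood of infinity. On $K_2$, the strict lower bound $U - U_{X_0,\bar\lambda} \geq c_2 > 0$ (and similarly for $V$) is preserved under $O(\varepsilon)$-perturbations of $\lambda$ by uniform continuity of the Kelvin map. On $K_1$, I reuse the narrow-domain energy estimate from Lemma \ref{lemma1} essentially verbatim: it is purely local, relies on the trace inequality together with the $L^{2n/(n-2\sigma)}$ integrability of $u,v$ on compact pieces of $\R^n\setminus\{0\}$, and absorbs the coupling via the $C\mu^{2\sigma}$ term. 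On $K_3$, near $(0,0)$ the singularity of $u$ or $v$ combined with the uniform boundedness of $U_{X_0,\lambda}$, $V_{X_0,\lambda}$ there for $\lambda \in [\bar\lambda,\bar\lambda+\varepsilon]\subset(0,|x_0|)$ gives a large positive gap; near infinity, the Kelvin decay $U_{X_0,\lambda}(\xi), V_{X_0,\lambda}(\xi) \lesssim |\xi|^{-(n-2\sigma)}$ together with the positivity of $U$ and $V$ yields the inequality. Choosing $\delta$ small for $K_1$ and then $\varepsilon$ small for $K_2$, $K_3$ produces $U_{X_0,\lambda} \leq U$, $V_{X_0,\lambda} \leq V$ for all $\lambda \in (\bar\lambda, \bar\lambda + \varepsilon)$, contradicting the maximality of $\bar\lambda$.

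The principal technical difficulty is the unbounded geometry of $D$: in Lemma \ref{lemma2} the analogue of $K_3$ was a small ball around a finite point, handled by a direct explicit barrier, whereas here one must simultaneously control the singularity at $(0,0)$ and the behavior at infinity, neither of which is compact. The singular point is in fact helpful, providing both the contradiction in case (b) and a quantitative positive lower bound for $U - U_{X_0,\lambda}$ near $0$ via a Proposition \ref{prop:liminf}-type estimate; the genuinely delicate step is comparing $U$ with the Kelvin decay in the neighborhood of infinity, which will need a preliminary asymptotic estimate on $U$ or a barrier argument using the $\sigma$-harmonic function $|\xi - X_0|^{2\sigma - n}$.
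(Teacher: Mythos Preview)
Your overall contradiction strategy matches the paper's. The ruling-out of the identity case via the singularity at the origin is also what the paper does, though more directly: since $U_{X_0,\bar\lambda}$ is bounded near $0$ while $\limsup u=\infty$, the difference $U-U_{X_0,\bar\lambda}$ cannot vanish identically, and the strong maximum principle then gives strict positivity on the whole exterior—no dichotomy is needed.

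The substantive difference is in how the exterior is handled. The paper uses only \emph{two} regions: the narrow shell $\{\lambda\le|\xi-X_0|\le\bar\lambda+\delta_2\}$ (your $K_1$), and the entire far piece $\{|\xi-X_0|\ge\bar\lambda+\delta_2\}$, which contains both the origin and infinity. On this far piece the paper applies Proposition~\ref{PP} directly to the difference $U-U_{X_0,\bar\lambda}$ (positive, $\sigma$-harmonic in the exterior, nonnegative conormal on $\partial'\R^{n+1}_+\setminus\{0\}$), obtaining
\[
(U-U_{X_0,\bar\lambda})(\xi)\;\ge\;\Bigl(\tfrac{\bar\lambda+\delta_2}{|\xi-X_0|}\Bigr)^{n-2\sigma}\inf_{\partial''\B^+_{\bar\lambda+\delta_2}(X_0)}(U-U_{X_0,\bar\lambda}).
\]
The second, equally essential, step is an explicit estimate showing that the perturbation $|U_{X_0,\bar\lambda}-U_{X_0,\lambda}|$ carries the \emph{same} decay factor $(\bar\lambda+\delta_2)^{n-2\sigma}|\xi-X_0|^{-(n-2\sigma)}$, with constant at most half the infimum above, uniformly in $\xi$, for $\lambda$ close to $\bar\lambda$. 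This is obtained by writing the Kelvin transforms out and using uniform continuity of $U$ on the compact set $\overline{\B^+_{(|x_0|+\bar\lambda)/2}(X_0)}$ where the reflected points $\xi_\lambda,\xi_{\bar\lambda}$ live.

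Your treatment of the neighborhood of infinity—``Kelvin decay of $U_{X_0,\lambda}$ together with positivity of $U$''—is the point that does not go through as stated: both $U$ and $U_{X_0,\lambda}$ generically decay at the same rate $|\xi|^{-(n-2\sigma)}$, so positivity of $U$ (or even the lower bound on $U$ from Lemma~\ref{lemma4.1}) does not by itself force $U_{X_0,\lambda}\le U$. The barrier you mention at the end is exactly the right tool, but it must be applied to the \emph{difference} $U-U_{X_0,\bar\lambda}$ rather than to $U$, and then paired with the decay-matching perturbation estimate above. Once you do that, there is no need for a separate compact annulus or a separate treatment of the origin and of infinity: Proposition~\ref{PP} covers the entire far region in one stroke.
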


Without loss of generality, we assume that
\[
\limsup_{x\rightarrow 0}u=\infty,
\]
namely $0$ is a non-removable singularity of $U$. Moreover, by the previous argument, we know that
$U$, $V\in C^2(\R^{n+1}_{+})\cap C(\overline{\R^{n+1}_{+}}\setminus\{0\})$, and
\be\label{2}
\begin{cases}
\begin{aligned}
&\mathrm{div}(t^{1-2\sigma} \nabla U)=0 &\quad&\mbox{in }\ \ \R^{n+1}_{+},\\
&\mathrm{div}(t^{1-2\sigma} \nabla V)=0 &\quad&\mbox{in }\ \ \R^{n+1}_{+},\\
&\frac{\partial U}{\partial \nu^\sigma}=\alpha_1u^{\frac{n+2\sigma}{n-2\sigma}}+\beta u^{\frac{2\sigma}{n-2\sigma}}v^{\frac{n}{n-2\sigma}} &\quad&\mbox{on }\ \ \partial'\R^{n+1}_{+}\backslash\{0\},\\
&\frac{\partial V}{\partial \nu^\sigma}=\alpha_2v^{\frac{n+2\sigma}{n-2\sigma}}+\beta v^{\frac{2\sigma}{n-2\sigma}}u^{\frac{n}{n-2\sigma}} &\quad&\mbox{on }\ \ \partial'\R^{n+1}_{+}\backslash\{0\}.
\end{aligned}
\end{cases}
\ee
\begin{proof}[Proof of Lemma \ref{lemma4.1}] We are going to show that there exist $\mu$ and $\lambda_0(x)$ satisfying $0<\lambda_0(x)<\mu$, ~which may depend on ~$x$,~ such that for all $\lda \in (0,\lambda_0(x))$,
\be\label{QQ}
U_{X,\lda }(\xi)\leq U(\xi),\ \ V_{X,\lda }(\xi)\leq V(\xi)\quad\quad \mbox{in}\ \ \overline{\B^+_\mu(X)}\backslash\B^+_\lda(X).
\ee
Then we will prove that for all $\lda \in (0,\lda_0(x))$,
\be\label{NQ}
U_{X,\lambda}(\xi)\leq U(\xi),\ \ V_{X,\lda }(\xi)\leq V(\xi)\quad\quad \mbox{in}\ \ \R^{n+1}_+\backslash\overline{\B^+_\mu(X)}.
\ee
The proof of \eqref{QQ} follows exactly the same as that for the the proof of the Lemma \ref{lemma1}.
Then, we just need to prove  \eqref{NQ}. By Proposition \ref{PP},  we have
\be\label{Weq:cl22U}
U(\xi)\geq \left(\frac{\mu}{|\xi-X|}\right)^{n-2\sigma}\inf_{\partial'' \B^{+}_{\mu}(X)} U\quad\quad \mbox{in}\ \  \R^{n+1}_+\backslash \overline{\B^{+}_{\mu}(X)}.
\ee
Then for all $\xi\in \R^{n+1}_+\backslash \B^{+}_{\mu}(X)$  and $\lda\in(0,\lda_0(x))\subset (0,\mu)$, we obtain that
\[
\begin{split}
 U_{X,\lda }(\xi)&= \left(\frac{\lda}{|\xi-X|}\right)^{n-2\sigma}U(X+\frac{\lda^2(\xi-X)}{|\xi-X|^2})\leq
 \left(\frac{\lda_0(x)}{|\xi-X|}\right)^{n-2\sigma}\sup\limits_{\B^+_{\mu}(X)}U\\
&\leq \left(\frac{\mu}{|\xi-X|}\right)^{n-2\sigma}\inf\limits_{\partial ''\B^+_{\mu}(X)}U
\leq U(\xi),
\end{split}
\]
where \eqref{Weq:cl22U} is used in the last inequality. Lemma \ref{lemma4.1} is proved.
\end{proof}
\begin{proof}[Proof of Lemma \ref{lemma4.2}]
By Lemma \ref{lemma4.1}, $\bar \lda(x)$ is well defined, and we also know that for $x\neq0$, $\bar \lda(x)\leq|x|$.   From the definition of $\bar \lda(x)$, it is obvious to see that for any $\lambda\in(0,\bar\lda(x)]$,
\be\label{M1}
U_{X,\lda}(\xi)\leq U(\xi),\quad V_{X,\lda}(\xi)\leq V(\xi)\quad \mbox{in }\ \R^{n+1}_+\backslash \B^+_{\lda}(X).
\ee
We prove Lemma \ref{lemma4.2} by contradiction. That is, suppose $\bar \lda(x)<|x|$ for some $x\neq 0$. We want to show that there exists $\va\in (0,\frac{|x|-\bar\lda(x)}{2})$ such that for any $\lda\in(\bar\lda(x),\bar\lda(x)+\va)$,
\be\label{Fenjie}
U_{X,\lda}(\xi)\leq U(\xi),\quad V_{X,\lda}(\xi)\leq V(\xi)\quad \mbox{in }\ \R^{n+1}_+\backslash \B^+_{\lda}(X),
\ee
which contradicts with the definition of $\bar\lda(x)$, then we obtain $\bar\lda(x)=|x|$.

We divide the region into two parts,
\[
\begin{array}{ll}
K_1:=\left\{\xi\in\R_{+}^{n+1}\ \big|  \ |\xi-X|\geq \bar\lda(x)+\delta_2\right\},\vspace{0.2cm}\\
K_2:=\left\{\xi\in\R^{n+1}_+\ \big| \ \lda\leq|\xi-X|\leq \bar\lda(x)+\delta_2\right\},
\end{array}
\]
where $\delta_2$ will be fixed later. Then in order to obtain \eqref{Fenjie} it  suffices to prove that it holds respectively on $K_1$, $K_2$.

From \eqref{M1}, we have obtained
\be
U_{X,\bar\lda(x)}(\xi)\leq U(\xi),\quad V_{X,\bar\lda(x)}(\xi)\leq V(\xi)\quad \mbox{in }\ \R^{n+1}_+\backslash \B^+_{\bar\lda(x)}(X).
\ee
Besides, by the fact that
\begin{equation*}
\begin{split}
\lim_{\xi\to 0}U_{X,\bar\lda(x) }(\xi)&= \lim_{\xi\to 0}\left(\frac{\bar\lda(x)}{|\xi-X|}\right)^{n-2\sigma}U\left(X+\frac{\bar\lda(x)^2(\xi-X)}{|\xi-X|^2}\right)\\
&=\left(\frac{\bar\lda(x)}{|X|}\right)^{n-2\sigma}U\left(X-\frac{\bar\lda(x)^2X}{|X|^2}\right)< \infty,
\end{split}
\end{equation*}
and the strong maximum principle, we have
\be\label{BO}
U_{X,\bar\lda(x)}(\xi)< U(\xi),\quad V_{X,\bar\lda(x)}(\xi)< V(\xi)\quad \mbox{in }\ \R^{n+1}_+\backslash \overline{\B^+_{\bar\lda(x)}(X)}.
\ee
Via a  calculation, it follows that
\begin{equation}
\begin{cases}
\mathrm{div}(t^{1-2\sigma}\nabla (U-U_{X,\bar\lda(x) }))=0&\text{in }\ K_1,\\
\frac{\partial (U-U_{X,\bar\lda(x) })}{\partial \nu^\sigma}
&\\=\alpha_1(u^{\frac{n+2\sigma}{n-2\sigma}}-u^{\frac{n+2\sigma}{n-2\sigma}}_{x,\bar\lda(x))}+\beta (u^{\frac{2\sigma}{n-2\sigma}}v^{\frac{n}{n-2\sigma}}-u_{x,\bar\lda(x) }^{\frac{2\sigma}{n-2\sigma}}v_{x,\bar\lda(x)}^{\frac{n}{n-2\sigma}})\quad &\text{on }\ \partial'K_1.
\end{cases}
\end{equation}
As a result,
\begin{equation}
\begin{cases}
\mathrm{div}(t^{1-2\sigma}\nabla (U-U_{X,\bar\lambda(x) }))=0& \text{in }\ K_1,\\
\frac{\partial (U-U_{X,\bar\lambda(x) })}{\partial \nu^\sigma}\geq0\quad &\text{on }\ \partial'K_1.
\end{cases}
\end{equation}
Through a combination of Proposition \ref{PP} and \eqref{BO}, it follows that
\be\label{S}
(U-U_{X,\bar\lambda(x) })(\xi)\geq \left(\frac{\bar\lambda(x)+\delta_2}{|\xi-X|}\right)^{n-2\sigma}\inf_{\partial'' \B^{+}_{\bar\lambda(x)+\delta_2}(X)} (U-U_{X,\bar\lambda(x) })\ \quad\text{ in } \ K_1,
\ee
and
\[
\inf_{\partial'' \B^{+}_{\bar\lambda(x)+\delta_2}(X)} (U-U_{X,\bar\lambda(x) })>0,
\]
By the uniform continuity of $U$ on compact sets, there exists $\varepsilon_{1}<\frac{|x|-\bar\lambda(x)}{2}$ small such that for any $\lambda\in(\bar \lambda(x), \bar \lambda(x)+\varepsilon_{1}$),
\be\label{WE}
|U_{X,\bar\lambda(x)}(\xi)-U_{X,\lambda}(\xi)|\leq\frac{1}{2}\left(\frac{\bar\lambda(x)+\delta_2}{|\xi-X|}\right)^{n-2\sigma}\inf_{\partial'' \B^{+}_{\bar\lambda(x)+\delta_2}(X)} (U-U_{X,\bar\lambda(x) })\ \quad\text{ in } \ K_1.
\ee
Indeed, notice that $\xi_{\bar\lambda(x)}:=X+\frac{\bar\lambda^2(x)(\xi-X)}{|\xi-X|^2}$,  $\xi_{\lambda}:=X+\frac{\lambda^2(\xi-X)}{|\xi-X|^2}\in \overline{\B^+_{\frac{|x|+\bar\lambda(x)}{2}}(X)} $ and
\begin{equation*}
\begin{split}
|U_{X,\bar\lambda(x)}(\xi)-U_{X,\lambda}(\xi)|=&\left|\left(\frac{\bar\lambda(x)}{|\xi-X|}\right)^{n-2\sigma}U(\xi_{\bar\lambda(x)})-\left(\frac{\lambda}{|\xi-X|}\right)^{n-2\sigma}U\left(\xi_{\lambda}\right)\right|\\
\leq&\left(\frac{\bar\lambda(x)}{|\xi-X|}\right)^{n-2\sigma}\left|U(\xi_{\bar\lambda(x)})-U\left(\xi_{\lambda}\right)\right|
+\frac{|\bar\lambda^{n-2\sigma}(x)-\lambda^{n-2\sigma}|}{|\xi-X|^{n-2\sigma}}U\left(\xi_{\lambda}\right)\\
=&\left(\frac{\bar\lambda(x)+\delta_2}{|\xi-X|}\right)^{n-2\sigma}\left(\frac{\bar\lambda(x)}{\bar\lambda(x)+\delta_2}\right)^{n-2\sigma}\left|U(\xi_{\bar\lambda(x)})-U\left(\xi_{\lambda}\right)\right|\\
&+\left(\frac{\bar\lambda(x)+\delta_2}{|\xi-X|}\right)^{n-2\sigma}\frac{|\bar\lambda^{n-2\sigma}(x)-\lambda^{n-2\sigma}|}{|\bar\lambda(x)+\delta_2|^{n-2\sigma}}U\left(\xi_{\lambda}\right).
\end{split}
\end{equation*}
By the uniform continuity of $U$ on compact sets, we can choose $\varepsilon_{1}$ sufficient small, such that for all $\lambda\in(\bar \lambda(x), \bar \lambda(x)+\varepsilon_{1}$),
\[
\left(\frac{\bar\lambda(x)}{\bar\lambda(x)+\delta_2}\right)^{n-2\sigma}\left|U(\xi_{\bar\lambda(x)})-U\left(\xi_{\lambda}\right)\right|
\leq \frac{1}{4}\inf_{\partial'' \B^{+}_{\bar\lambda(x)+\delta_2}(X)} (U-U_{X,\bar\lambda(x) }),
\]
and
\[
\frac{|\bar\lambda^{n-2\sigma}(x)-\lambda^{n-2\sigma}|}{|\bar\lambda(x)+\delta_2|^{n-2\sigma}}U\left(\xi_{\lambda}\right)
\leq \frac{1}{4}\inf_{\partial'' \B^{+}_{\bar\lambda(x)+\delta_2}(X)} (U-U_{X,\bar\lambda(x) }).
\]
Then \eqref{WE} follows.

Combining \eqref{S} with \eqref{WE}, we conclude that for any $ \lambda \in(\bar \lambda(x),\bar \lambda(x)+\varepsilon_{1})$,
\[
U_{X,\lambda}(\xi)\leq U(\xi) \ \quad\text{ in } \ K_1.
\]
Hence, we have
\[
U_{X,\lambda}(\xi)\leq U(\xi),\quad V_{X,\lambda}(\xi)\leq V(\xi) \ \quad\text{ on } \ \pa'K_2.
\]
Using the narrow domain technique as that the proof of \eqref{Q}, we can choose $\delta_2$
small (notice that we can choose $\va$ as small as we want less then $\varepsilon_1$ such that for $ \lambda \in(\bar \lambda(x),\bar \lambda(x)+\va)$, we have
\be\label{H3}
U_{X,\lambda}(\xi)\leq U(\xi),\quad V_{X,\lambda}(\xi)\leq V(\xi) \ \quad\text{ in } \ K_2.
\ee
From the above argument, we can see that the moving sphere procedure may continue beyond $\bar \lambda(x)$ where we reach a contradiction.
\end{proof}

\section{Proof of Theorem \ref{thm:a}}\label{F2}
In order to finish the proof of Theorem \ref{thm:a}, we divide the proof into two parts. As the first part, we will obtain the upper bound near an isolated singularity by the blow up analysis and the method of moving sphere in Section \ref{Upper}. One consequence of this upper bound is a Harnack inequality (Lemma \ref{lem:spherical harnack}), which will be used frequently in the second part. In the second part, we use the Pohozaev integral to prove Theorem \ref{14}, Theorem \ref{15}. A combination of Theorem \ref{14} and Theorem \ref{15}, we can see that if $\liminf_{|x|\to 0}|x|^{\frac{n-2\sigma}{2}}(u+v)(x)=0$, then $(u,v)$ can be extended as a continuous function at the origin $0$. Otherwise, we obtain the lower bound near an isolated singularity.
\subsection{A Upper Bound near an Isolated Singularity}\label{Upper}
\begin{thm}\label{thm2}
Let $(u,v)\in C^{2}(B_1\backslash \{0\})\cap L_{\sigma}(\R^n)$ be a solution of \eqref{91}, then
\[
\limsup_{|x|\to 0} |x|^{\frac{n-2\sigma}{2}}u(x)<\infty,\ \
\limsup_{|x|\to 0} |x|^{\frac{n-2\sigma}{2}}v(x)<\infty.
\]
\end{thm}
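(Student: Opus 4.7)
The plan is to argue by contradiction, blow up at a selected sequence of points, apply the Liouville theorem (Theorem~\ref{thm1}) to identify the blow-up limit, and close via a quantitative moving-sphere argument modeled on Section~\ref{F4}. Suppose the conclusion fails; then without loss of generality there is $x_k\to 0$ in $B_1\setminus\{0\}$ with $|x_k|^{(n-2\sigma)/2}(u+v)(x_k)\to\infty$. A standard Polacik--Quittner--Souplet doubling lemma applied to the scalar quantity $(u+v)^{2/(n-2\sigma)}$ produces a new sequence $\bar x_k\to 0$ with $M_k:=(u+v)(\bar x_k)\to\infty$, $|\bar x_k|^{(n-2\sigma)/2}M_k\to\infty$, and
\[
\sup_{|x-\bar x_k|\le R_k\mu_k}(u+v)(x)\le 2^{(n-2\sigma)/2}M_k,
\]
where $\mu_k:=M_k^{-2/(n-2\sigma)}$ and $R_k\to\infty$ with $R_k\mu_k\le|\bar x_k|/2$.

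Working with the Caffarelli--Silvestre extensions and $X_k:=(\bar x_k,0)$, I would rescale $\tilde U_k(Y):=\mu_k^{(n-2\sigma)/2}U(X_k+\mu_k Y)$ and $\tilde V_k(Y):=\mu_k^{(n-2\sigma)/2}V(X_k+\mu_k Y)$. By criticality of the exponent in \eqref{Weq:ex0001}, $(\tilde U_k,\tilde V_k)$ solves the same extended system on $\B^+_{R_k}$, with $(\tilde u_k+\tilde v_k)(0)=1$ and the trace bound $\tilde u_k+\tilde v_k\le 2^{(n-2\sigma)/2}$ on $B_{R_k}$. Proposition~\ref{Harnack inequality}(i) promotes this trace bound to a uniform local $L^\infty$ bound on $\tilde U_k+\tilde V_k$, and Proposition~\ref{Harnack inequality}(iii) upgrades it to uniform local $C^\alpha$ estimates. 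A subsequence then converges locally uniformly on $\overline{\R^{n+1}_+}$ to a classical solution $(\tilde U_\infty,\tilde V_\infty)$ of the extended \eqref{I8} on $\R^{n+1}_+$ with $(\tilde u_\infty+\tilde v_\infty)(0)=1$, and Theorem~\ref{thm1} identifies the limit as $(k\widehat U,l\widehat U)$ for a fractional bubble $\widehat U$.

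To finish, I would transplant the moving-sphere inequality of Section~\ref{F4} back to the prelimit. For the bubble limit there is $\bar\lambda_\infty>0$ so that the Kelvin inequality at $0$ holds for every $\lambda\in(0,\bar\lambda_\infty)$; locally uniform convergence and the conformal covariance of the Kelvin transform then give, for every $\lambda\in(0,\bar\lambda_\infty/2)$ and all large $k$,
\[
U_{X_k,\lambda\mu_k}(\xi)\le U(\xi),\quad V_{X_k,\lambda\mu_k}(\xi)\le V(\xi)\quad\text{in }\B^+_\rho(X_k)\setminus\B^+_{\lambda\mu_k}(X_k).
\]
Since $|\bar x_k|/\mu_k=R_k\to\infty$, the singularity $0$ stays outside $\B^+_{\lambda\mu_k}(X_k)$; choosing $\xi=(x,0)$ on the segment from $\bar x_k$ toward $0$ and unwinding the Kelvin identity produces a pointwise lower bound for $(u+v)$ at a point whose distance from $0$ is of order $|\bar x_k|$, where Harnack-type estimates in a ball not meeting the singularity (Proposition~\ref{Harnack inequality}) provide matching upper control. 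Combining the two forces $M_k\lesssim|\bar x_k|^{-(n-2\sigma)/2}$, contradicting $|\bar x_k|^{(n-2\sigma)/2}M_k\to\infty$. The hardest step is this last contradiction: it requires a careful choice of test point so that its Kelvin image lands in a region with usable a-priori control, and a doubling selection arranged so that $R_k\mu_k\ll|\bar x_k|$ keeps $0$ safely separated from the moving sphere; the doubling selection itself is routine once one works with the scalar quantity $u+v$ rather than the components separately.
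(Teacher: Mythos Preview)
Your blow-up and limit identification are fine and match the paper. The gap is in the contradiction step. You claim that locally uniform convergence of $(\tilde U_k,\tilde V_k)$ to the bubble yields the Kelvin inequality $U_{X_k,\lambda\mu_k}\le U$ on $\B^+_\rho(X_k)\setminus\B^+_{\lambda\mu_k}(X_k)$ for a \emph{fixed} $\rho$, and then compare values at a test point on the segment toward the origin. But convergence controls the rescaled functions only on compacta, i.e.\ on balls of radius $O(\mu_k)$ around $X_k$ in the original variables; it gives no information at distance $\rho$ or $|\bar x_k|$ from $X_k$. Worse, even if the inequality were available, spheres of radius $\lambda\mu_k\to 0$ are far too small to force $M_k\lesssim|\bar x_k|^{-(n-2\sigma)/2}$: at a point $\xi$ with $|\xi-X_k|\sim c$ one gets only $(\lambda\mu_k/c)^{n-2\sigma}M_k\lesssim (U+V)(\xi)$, which for $c\sim 1$ reduces to $M_k^{-1}\lesssim C$ (vacuous), and for $c\sim|\bar x_k|$ involves $(u+v)(\xi)$ at a point where you have no a-priori control.

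The paper does not attempt to transplant the inequality from the limit. Instead it runs the full moving-sphere argument directly on the \emph{prelimit} pair $(U_j,V_j)$ on the punctured rescaled domain $\omega_j$ (Lemmas \ref{Lemma5.1}--\ref{Lemma5.2}), proving that for \emph{every} center $X_0$ and \emph{every} $\lambda_0>0$ the Kelvin inequality holds up to $\lambda_0$. Passing to the limit then gives $(\tilde u+\tilde v)_{x,\lambda}\le\tilde u+\tilde v$ for all $x$ and all $\lambda>0$, whence Proposition~\ref{CHANG2} forces $\tilde u+\tilde v$ constant, contradicting the bubble form from Theorem~\ref{thm1}. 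The substantive work is precisely what your sketch skips: controlling the comparison on $\omega_j\setminus\B^+_\lambda(X_0)$ near the scaled singularity $Y_0=-\bar x_j w(\bar x_j)^{2/(n-2\sigma)}$ and near $\partial''\omega_j$ (via \eqref{eq:cl20}--\eqref{eq:cl21}), together with the narrow-domain step for the annular shell. None of this can be replaced by soft convergence.
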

\begin{proof} Without loss of generality, we assume that $u$, $v$ are continuous to the boundary $\partial B_1$. Suppose the contrary that there exists a sequence $\{x_j\} \subset B_1$ such that
\[
x_j\to 0\quad \mbox{as } j\to \infty,
\]
and
\be\label{eq:cl1}
|x_j|^{\frac{n-2\sigma}{2}}(u+v)(x_j)\to \infty\quad \mbox{as }j\to \infty.
\ee
Define
\[
w(x):=(u+v)(x).
\]
Consider
\[
h_j(x):=\left(\frac{|x_j|}{2}-|x-x_j|\right)^{\frac{n-2\sigma}{2}} w(x)\quad\quad\mbox{in }\ \ B_{|x_j|/2}(x_j).
\]
Let $|\bar x_j-x_j|<\frac{|x_j|}{2}$ satisfy
\[
h_j(\bar x_j)=\max_{|x-x_j|\leq \frac{|x_j|}{2}}h_j(x),
\]
and
\[
2\mu_j:=\frac{|x_j|}{2}-|\bar x_j-x_j|.
\]
Then
\be \label{eq:cl2}
0<2\mu_j\leq \frac{|x_j|}{2}\quad\mbox{and}\quad \frac{|x_j|}{2}-|x-x_j|\ge\mu_j\quad\quad\mbox{in }\ \ B_{\mu_j}(\overline{x}_j).
\ee
By the definition of $h_j$, we have
\be \label{eq:cl3}
(2\mu_j)^{\frac{n-2\sigma}{2}}w(\bar x_j)=h_j(\bar x_j)\ge h_j(x)\ge (\mu_j)^{\frac{n-2\sigma}{2}}w(x)\quad\quad\mbox{in }\ \ B_{\mu_j}(\overline{x}_j).
\ee
Therefore,
\be\label{youjie}
\frac{w(x)}{w(\bar x_j)}\leq2^{\frac{n-2\sigma}{2}}\quad\quad\mbox{in }\ \ B_{\mu_j}(\bar x_j).
\ee
On the other hand,
\be\label{eq:cl4}
(2\mu_j)^{\frac{n-2\sigma}{2}}w(\bar x_j)=h_j(\bar x_j)\ge h_j(x_j)= \left(\frac{|x_j|}{2}\right)^{\frac{n-2\sigma}{2}}w(x_j)\to \infty\quad\mbox{as }j\to\infty.
\ee
Now, define
\[
U_j(y,t):=\frac{1}{w(\bar x_j)}U\left(\bar x_j+\frac{y}{w(\bar x_j)^{\frac{2}{n-2\sigma}}},\frac{t}{w(\bar x_j)^{\frac{2}{n-2\sigma}}}\right ) \quad \mbox{in }\  \om_j,
\]
\[
V_j(y,t):=\frac{1}{w(\bar x_j)}V\left(\bar x_j+\frac{y}{w(\bar x_j)^{\frac{2}{n-2\sigma}}},\frac{t}{w(\bar x_j)^{\frac{2}{n-2\sigma}}}\right ) \quad \mbox{in }\  \om_j,
\]
where $U$ and $V$ are defined as \eqref{DD}, and
\[
\overline{\om_j}:=\left\{(y,t)\in \overline{\R_+^{n+1}}| \left(\bar x_j+\frac{y}{w(\bar x_j)^{\frac{2}{n-2\sigma}}},\frac{t}{w(\bar x_j)^{\frac{2}{n-2\sigma}}}\right)\in  \overline{\B^+_{1}}\setminus \{0\} \right\}.
\]
Let
\[
u_j(y):=U_j(y,0),\quad\quad v_j(y):=V_j(y,0),
\]
It follows that
\be\label{11}
\begin{cases}
\begin{aligned}
&(-\Delta)^{\sigma}u_j=\alpha_1u_j^{\frac{n+2\sigma}{n-2\sigma}}+\beta u_j^{\frac{2\sigma}{n-2\sigma}}v_j^{\frac{n}{n-2\sigma}} &\quad&\mbox{in }\ \   \partial'\om_j,\\
&(-\Delta)^{\sigma}v_j=\alpha_2v_j^{\frac{n+2\sigma}{n-2\sigma}}+\beta v_j^{\frac{2\sigma}{n-2\sigma}}u_j^{\frac{n}{n-2\sigma}} &\quad&\mbox{in }\ \   \partial'\om_j.
\end{aligned}
\end{cases}
\ee

Moreover, $u_j(0)+ v_j(0)=1$. From \eqref{youjie}, we obtain for all $|\bar x_j+\frac{y}{w(\bar x_j)^{\frac{2}{n-2\sigma}}}-\bar x_j|\leq \mu_j$,
\begin{equation*}
\begin{split}
u_j(y)&=\frac{1}{w(\bar x_j)}u\left(\bar x_j+\frac{y}{w(\bar x_j)^{\frac{2}{n-2\sigma}}}\right )
\leq \frac{1}{w(\bar x_j)}(u+v)\left(\bar x_j+\frac{y}{w(\bar x_j)^{\frac{2}{n-2\sigma}}}\right )\\
&=\frac{1}{w(\bar x_j)}w\left(\bar x_j+\frac{y}{w(\bar x_j)^{\frac{2}{n-2\sigma}}}\right )\leq2^{\frac{n-2\sigma}{2}},
\end{split}
\end{equation*}
Therefore, we conclude that
\[
u_j(y),\ \ v_j(y)\leq2^{\frac{n-2\sigma}{2}}\quad \mbox{in}  \ \  B_{R_j},
\]
where $$R_j=\mu_jw(\bar x_j)^{\frac{2}{n-2\sigma}}\rightarrow \infty\quad\quad\mbox{as }\ \ j\rightarrow \infty.$$
It follows that
\[
u_j^{\frac{2\sigma}{n-2\sigma}}v_j^{\frac{n}{n-2\sigma}},\ \ v_j^{\frac{2\sigma}{n-2\sigma}}u_j^{\frac{n}{n-2\sigma}}\leq2^{\frac{n+2\sigma}{2}}\quad \mbox{in}\  \ B_{R_j}.
\]
By Proposition \ref{prop:guji1}, there exists $\gamma \in (0,1)$, such that $u_j$, $v_j\in C^{\gamma}(B_{R_j/2})$. Bootstrapping use Proposition \ref{prop:guji}, there exists $\al$ and for every $R>1$, such that $u_j$, $v_j\in C^{2,\al}(B_{R})$. Moreover,
\[
\|u_j\|_{C^{2,\al}(B_{R})}, \ \|v_j\|_{C^{2,\al}(B_{R})}\leq C(R),
\]
where $C$ is  independent on $j$. Thus, after passing to a subsequence,  there exist $\widetilde{u}$, $\widetilde{v}\in C^2(\R^n)$ such that
\[
\begin{cases}
u_j&\rightarrow \widetilde{u}\quad\mbox{in }\ C^2_{{\rm{loc}}}(\R^n),\\
v_j&\rightarrow \widetilde{v}\quad\mbox{in }\ C^2_{{\rm loc}}(\R^n),
\end{cases}
\]
and
\[
\begin{cases}
\begin{aligned}
&(-\Delta)^{\sigma}\widetilde{u}=\alpha_1\widetilde{u}^{\frac{n+2\sigma}{n-2\sigma}}+\beta \widetilde{u}^{\frac{2\sigma}{n-2\sigma}}\widetilde{v}^{\frac{n}{n-2\sigma}} &\quad&\mbox{in }\ \ \R^{n},\\
&(-\Delta)^{\sigma}\widetilde{v}=\alpha_2\widetilde{v}^{\frac{n+2\sigma}{n-2\sigma}}+\beta \widetilde{v}^{\frac{2\sigma}{n-2\sigma}}\widetilde{u}^{\frac{n}{n-2\sigma}} &\quad&\mbox{in }\ \ \R^{n},
\end{aligned}
\end{cases}
\]
and $(\widetilde{u}+\widetilde{v})(0)=1$. By Liouville Theorem (Theorem \ref{thm1}), we have
\be\label{eq:cl5}
(\widetilde{u}+\widetilde{v})(x)= \left(\frac{1}{1+|x|^2}\right)^{\frac{n-2\sigma}{2}}
\ee
modulo some multiple, scaling and translation.

On the other hand, we are going to show that  for any $\lda>0$, $x\in \R^n$,
\be\label{eq:aim1}
(\widetilde{u}+\widetilde{v})_{ x,\lda}(y)\leq (\widetilde{u}+\widetilde{v})(y)\quad\mbox{in } \R^n\backslash B_{\lda}(x).
\ee
By an elementary calculus lemma Proposition \ref{CHANG2} implies that
\[
\widetilde{u}+\widetilde{v}\equiv constant.
\]
This contradicts to \eqref{eq:cl5}.

In order to prove \eqref{eq:aim1}, it suffices to prove that for any $x\in \R^n$, $\lda>0$,
\be\label{eq:aim11}
(u_{j}+v_{j})_{x,\lda}(y)\leq (u_j+v_j)(y)\quad \mbox{in}\ \ \partial'(\Omega_j\backslash \B^+_{\lda}(X)).
\ee
Sending $j\to \infty$, \eqref{eq:aim1} follows.

Hence, let us arbitrarily fix $x_0\in\R^n$, $X_0=(x_0,0)$ and $\lda_0>0$. Then for all $j$ large, we have $|x_0|<\frac{R_j}{10}, 0<\lda_0<\frac{R_j}{10}$. If we have proved that for any $\lda\in(0,\lda_0)$,
\be\label{PW}
(U_{j}+V_{j})_{X_0,\lda}(\xi)\leq (U_j+V_j)(\xi)\quad \mbox{in}\ \  \Omega_j\backslash \B^+_{\lda}(X).
\ee
Together with the arbitrariness of  $x_0$ and $\lda_0$, \eqref{eq:aim11} has been verified.

For simplicity, we denote
\[
H_j(\xi):=(U_j+V_j)(\xi),\quad H_{j,X_0,\lambda}:=(U_{j}+V_{j})_{X_0,\lda}(\xi),
\]
and define
\[
\bar \lda(x):=\sup \big\{\mu\in (0,\lda_0)\big|H_{j,X_0,\lda}(\xi)\le H_j(\xi)\ \ \mbox{in }\ \om_j\backslash \B^+_{\lda}(X_0),~\forall\lda\in(0, \mu)\big\}.
\]
From what has been discussed above, it is clearly to know that if we  get $\bar \lda(x)=\lda_0$, then \eqref{PW} follows.
Therefore,  we need the following Lemma \ref{Lemma5.1}  to make sure that $\bar \lda(x)$ is well defined, and then we shall prove $\bar \lda(x)=\lda_0$. Before that, by a calculation it is easy to see that
\be
\begin{cases}
\begin{aligned}
&\mathrm{div}(t^{1-2\sigma} \nabla U_j)=0 &\quad&\mbox{in }\ \ \om_j,\\
&\mathrm{div}(t^{1-2\sigma} \nabla V_j)=0 &\quad&\mbox{in }\ \ \om_j,\\
&\frac{\partial U_j}{\partial \nu^\sigma} =\alpha_1u_j^{\frac{n+2\sigma}{n-2\sigma}}+\beta u_j^{\frac{2\sigma}{n-2\sigma}}v_j^{\frac{n}{n-2\sigma}} &\quad&\mbox{on }\ \ \partial'\om_j,\\
&\frac{\partial V_j}{\partial \nu^\sigma} =\alpha_2v_j^{\frac{n+2\sigma}{n-2\sigma}}+\beta v_j^{\frac{2\sigma}{n-2\sigma}}u_j^{\frac{n}{n-2\sigma}} &\quad&\mbox{on }\ \ \partial'\om_j.
\end{aligned}
\end{cases}
\ee

\begin{lem}\label{Lemma5.1} We would like to show that there exists $\lda_1\in (0,\lda_0)$ such that for any $\lda\in(0,\lda_1)$,
\be\label{eq:bigger}
H_{j,X_0,\lda}(\xi)\le H_j(\xi)\quad\mbox{\rm in }\ \ \om_j\backslash \B^+_{\lda}(X_0).
\ee
\end{lem}
Then we give that

\begin{lem}\label{Lemma5.2}
\be\label{Weq:gseq}
\bar \lda(x)=\lda_0.
\ee
\end{lem}

\begin{proof}[Proof of Lemma \ref{Lemma5.1}] There consists two steps. The step 1 we need to prove
\[
U_{j,X_0,\lda }(\xi)\leq U_j(\xi),\ \ V_{j,X_0,\lda }(\xi)\leq V_j(\xi) \quad \mbox{in }\ \ \overline{\B^+_\mu(X_0)}\backslash\B^+_\lda(X_0).
\]
This step follows exactly the same as that for the the proof of \eqref{Q} in  Lemma \ref{lemma1}. It follows that
\be
H_{j,X_0,\lda }(\xi)\leq H_j(\xi)\quad \mbox{in }\ \ \overline{\B^+_\mu(X)}\backslash\B^+_\lda(X).
\ee
Pay attention to the difference is that we choose
$$\lambda_0(x)=\mu\min\left\{\left(\frac{\inf\limits_{\partial'' \B^+_{\mu}(X)} U_j}{\sup\limits_{ \B^+_{\mu}(X)}U_j}\right)^{\frac{1}{n-2\sigma}},\left(\frac{\inf\limits_{\partial'' \B^+_{\mu}(X)} V_j}{\sup\limits_{ \B^+_{\mu}(X)}V_j}\right)^{\frac{1}{n-2\sigma}},\left(\frac{\inf\limits_{\partial'' \B^+_{\mu}(X)} H_j}{\sup\limits_{ \B^+_{\mu}(X)}H_j}\right)^{\frac{1}{n-2\sigma}} \right\} ,$$
instead of \eqref{IDA}.

The  step 2 we are going to prove
\[
H_{j,X_0,\lda }(\xi)\leq H_j(\xi)\quad \mbox{in }\ \ \R^{n+1}_+\backslash\overline{\B^+_\mu(X)}.
\]
Let $\phi(\xi):=\left(\frac{\mu}{|\xi-X_0|}\right)^{n-2\sigma}\inf\limits_{\partial'' \B^+_{\mu}(X_0)} H_j$,
which satisfies
\[
\begin{cases}
\begin{aligned}
&\mathrm{div}(t^{1-2\sigma}\nabla \phi)=0&\quad& \mbox{in }\ \R^{n+1}_+\setminus \B^+_{\mu}(X_0),\\
&\frac{\partial \phi}{\partial \nu^\sigma}=0&\quad& \mbox{on }\ \partial'(\R^{n+1}_+\setminus \B^+_{\mu}(X_0))
 \end{aligned}
\end{cases}
\]
and
\[
\phi(\xi)=\inf\limits_{\partial'' \B^{+}_{\mu}(X_0)} H_j\leq H_j(\xi)\quad\mbox{on }\ \partial'' \B^{+}_{\mu}(X_0).
\]
In addition, since $u+v\ge 1/C>0$ on $\pa B_1$, it follows from Proposition \ref{Harnack inequality} that
\be\label{eq:cl20}
H_{j}(\xi)\geq \frac{1}{Cw(\bar x_j)}>0 \quad \mbox{on }\pa'' \om_j.
\ee
 Since $\frac{|x_j|}{2}\le |\bar x_j|\leq \frac{3|x_j|}{2}<<1$, for any $\xi\in \pa'' \om_j$, i.e., $\left|\bar X_j+\frac{\xi}{w(\bar x_j)^{\frac{2}{n-2\sigma}}}\right|=1$, we have
\[
|\xi|\approx w(\bar x_j)^{\frac{2}{n-2\sigma}}.
\]
Thus
\be\label{eq:cl21}
H_{j}(\xi)\geq \frac{1}{Cw(\bar x_j)}>\left(\frac{\mu}{|\xi-X_0|}\right)^{n-2\sigma}\inf\limits_{\partial'' \B_{\mu}^+(X_0)} H_{j} \quad \mbox{on }\pa'' \om_j,
\ee
where we used the fact that $H_{j}\rightarrow H:=\widetilde{U}+\widetilde{V}$ locally uniformly

By the standard maximum principle argument,  we have
\be\label{9o}
H_j(\xi)\geq \left(\frac{\mu}{|\xi-X_0|}\right)^{n-2\sigma}\inf_{\partial'' \B^{+}_{\mu}(X_0)} H_j\ \  ~\mbox{in }\ \ \om_j\backslash \B^+_{\mu}(X_0).
\ee
Then for all $\xi\in \om_j\backslash \B^+_{\mu}(X_0)$, $0<\lda<\lda_{0}$, by \eqref{9o}, we have
\[
\begin{split}
H_{j,X_0,\lda }(\xi)&= \left(\frac{\lda}{|\xi-X_0|}\right)^{n-2\sigma}H_j(X_0+\frac{\lda^2(\xi-X_0)}{|\xi-X_0|^2})
\leq \left(\frac{\lda_0}{|\xi-X_0|}\right)^{n-2\sigma}\sup\limits_{\B^{+}_{\mu}(X_0)}H_j\\
&\leq \left(\frac{\mu}{|\xi-X_0|}\right)^{n-2\sigma}\inf\limits_{\partial ''\B^{+}_{\mu}(X_0)}H_j\leq H_j(\xi).
\end{split}
\]
\end{proof}
\begin{proof}[Proof of Lemma \ref{Lemma5.2}] We argue by contradiction. Suppose that $\bar\lda(x)<\lda_0$, we want to show that there exists a positive constant $\va$ such that for all $\lda\in(\bar\lda(x),\bar\lda(x)+\va)$,
\be\label{Zhong1}
H_{j,X_0,\lda}(\xi)\leq H_j(\xi)\quad \mbox{in }\ \ \om_j\setminus\B^+_{\lda}(X),
\ee
which contradicts with the definition of $\bar\lda(x)$.

Let us divide the region  $\om_j\setminus\B^+_{\lda}(X)$ into two parts. For $\delta, \delta_1>0$ small, which will be fixed later, denote
\[
\begin{array}{ll}
K_{1}:=\{\xi\in\om_j| 0<|\xi-Y_0|\leq\delta_1\},\vspace{0.2cm}\\
K_{2}:=\{\xi\in\om_j||\xi-Y_0|\geq\delta_1, |\xi-X_0|\geq\bar\lda(x)+\delta\},\vspace{0.2cm}\\
K_{3}:=\{\xi\in\om_j| \lda\leq|\xi-X_0|\leq\bar\lda(x)+\delta\},
\end{array}
\]
where $Y_0:=-\bar x_jw(\bar x_j)^{\frac{2}{n-2\sigma}}$.  In order to obtain \eqref{Zhong1} it  suffices to prove that it established on $K_{1}$, $K_2$, $K_3$. The following we will  prove \eqref{Zhong1} holds in $K_1$, $K_2$, $K_3$ respectively.

Similar to \eqref{eq:cl21}, we have
\[
H_{j}(\xi)\geq \frac{1}{Cw(\bar x_j)}>\left(\frac{\lda_0}{|\xi-X_0|}\right)^{n-2\sigma}\sup\limits_{ \B^+_{\lda_0}(X_0)} H_{j}\geq H_{j,X_0,\bar\lda(x)}(\xi) \quad \mbox{on }\pa'' \om_j,
\]
It follows from the strong maximum principle,
\[
H_{j,X_0,\bar\lda(x)}(\xi)< H_{j}(\xi)\quad\mbox{in}\ \ \overline \om_j\setminus \overline{\B^+_{\bar\lda(x)}(X_0)}
\]
and by Proposition \ref{prop:liminf}, there exist two positive constants $\delta_1$, $C_1$ such that
\[
H_{j}(\xi)-H_{j,X_0,\bar\lda(x)}(\xi)>C_1  \ \text{ in }\  K_{1}.
\]
Choose a  positive constant $\va_1$ small such that for all $\lda\in (\bar\lda(x),\bar\lda(x)+\va_1)$,
\[
H_{j,X_0,\bar\lda(x)}(\xi)-H_{j,X_0,\lda}(\xi)>-C_1/2\ \text{ in }\  K_{1}.
\]
Hence,
\be\label{E1}
H_{j}(\xi)-H_{j,X_0,\lda}(\xi)>C_1/2 \ \text{ in } \ K_{1}.
\ee
Together with
\[
H_{j,X_0,\bar\lda(x)}(\xi)< H_{j}(\xi)\quad\mbox{in}\ K_2,
\]
and the compactness of $K_2$, there exists a positive constant $C_2$ such that
\[
H_{j}(\xi)-H_{j,X_0,\bar\lda(x)}(\xi)>C_2  \ \text{ in }\  K_{2}.
\]
By the uniform continuity of $H_j$ on compact sets, there exists a  positive constant $\va_2$ small such that for all $\lda\in (\bar\lda(x),\bar\lda(x)+\va_2)$,
\[
H_{j,X_0,\bar\lda(x)}(\xi)-H_{j,X_0,\lda}(\xi)>-C_2/2\ \text{ in }\  K_{2}.
\]
Therefore,
\be\label{E1o}
H_{j}(\xi)-H_{j,X_0,\lda}(\xi)>C_2/2\ \text{ in } \ K_{2}.
\ee
Now let us focus on the region $K_3$. Using the narrow domain technique as that in Lemma \ref{lemma1}, we can choose $\delta$ small ( notice that we can choose $\va$ as small as we want ) such that
\[
 U_{j,X_0,\lda}(\xi)\leq U_{j}(\xi), \ V_{j,X_0,\lda}(\xi)\leq  V_{j}(\xi) \ \text{ in } \ K_3.
\]
Thus,
\be\label{E2}
 H_{j,X_0,\lda}(\xi)\leq H_{j}(\xi) \ \text{ in } \ K_2.
\ee
Combining \eqref{E1}, \eqref{E1o} with \eqref{E2}, we obtain that there exists a positive constant $\va_1$ such that for all $\lda\in(\bar\lda(x),\bar\lda(x)+\va)$,
\[
H_{j,X_0,\lda}(\xi)\leq H_{j}(\xi)\quad\mbox{in }\ \om_j\backslash \B_{\lda}^+(X_0),
\]
which contradicts with the definition of $\bar\lda(x)$. Lemma \ref{Lemma5.2} is proved.
\end{proof}
\end{proof}

One consequence of this upper bound is that every solution $(U,V)$ of \eqref{91} satisfies the following Harnack inequality, which will be used very frequently in this rest of the paper.
\begin{lem}\label{lem:spherical harnack}
Suppose that $(U,V)\in C^2(\B^+_1)\cap C^1(\overline{\B^+_1}\backslash\{0\})$ is a nonnegative solution of
\[
\begin{cases}
\begin{aligned}
&\mathrm{div}(t^{1-2\sigma} \nabla U)=0 &\quad&\mbox{\rm in }\ \ \B_1^{+},\\
&\mathrm{div}(t^{1-2\sigma} \nabla V)=0 &\quad&\mbox{\rm in }\ \ \B_1^{+},\\
&\frac{\partial U}{\partial \nu^\sigma} =\alpha_1u^{\frac{n+2\sigma}{n-2\sigma}}+\beta u^{\frac{2\sigma}{n-2\sigma}}v^{\frac{n}{n-2\sigma}} &\quad&\mbox{\rm on }\ \pa'\B^{+}_{1}\backslash\{0\},\\
&\frac{\partial V}{\partial \nu^\sigma} =\alpha_2v^{\frac{n+2\sigma}{n-2\sigma}}+\beta v^{\frac{2\sigma}{n-2\sigma}}u^{\frac{n}{n-2\sigma}} &\quad&\mbox{\rm on }\  \pa'\B^{+}_{1}\backslash\{0\}.
\end{aligned}
\end{cases}
\]
Then for all $0<r<1/4$, we have
\be\label{eq:spherical harnack}
\sup_{\B^+_{2r}\setminus\overline{\B^+_{r}}} (U+V)\le C \inf_{\B^+_{2r}\setminus\overline{\B^+_{r}}} (U+V),
\ee
where $C$ is a positive constant independent of $r$.
\end{lem}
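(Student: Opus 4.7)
The plan is to reduce \eqref{eq:spherical harnack} to a single Harnack inequality on a fixed annulus via the critical scaling of the system, and then to run a finite Harnack chain. For $r\in(0,1/4)$, define
\[
\tilde U(\xi):=r^{(n-2\sigma)/2}U(r\xi),\qquad \tilde V(\xi):=r^{(n-2\sigma)/2}V(r\xi)
\]
on $\B^+_{1/r}\setminus\{0\}$. A direct computation, using the criticality of the exponents in the boundary condition, shows that $(\tilde U,\tilde V)$ solves the same extension system as $(U,V)$, so the inequality for $(U,V)$ on $\B^+_{2r}\setminus\overline{\B^+_r}$ is equivalent to an $r$-uniform Harnack inequality for $(\tilde U,\tilde V)$ on the fixed annulus $\B^+_2\setminus\overline{\B^+_1}$. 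By Theorem~\ref{thm2} we have $(u+v)(x)\le C_0|x|^{-(n-2\sigma)/2}$ for $|x|$ small, and after rescaling this becomes $(\tilde u+\tilde v)(y)\le C_0|y|^{-(n-2\sigma)/2}$, so on the enlarged shell $\overline{\B^+_{5/2}}\setminus\B^+_{3/4}$ there is a uniform bound $\tilde u+\tilde v\le C$ independent of $r$.

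Consequently the nonlinear coefficients $\al_i\tilde u^{4\sigma/(n-2\sigma)}$ and $\beta\tilde u^{2\sigma/(n-2\sigma)}\tilde v^{2\sigma/(n-2\sigma)}$ are uniformly bounded in every $L^q$ on this shell, and a standard weighted Caccioppoli estimate controls $\|\tilde U+\tilde V\|_{W^{1,2}(t^{1-2\sigma})}$ on a slightly smaller shell uniformly in $r$. Now cover the compact connected set $\overline{\B^+_2\setminus\B^+_1}$ by a finite family of small half-balls $\B^+_\rho(Y_k)$ centered on $\pa'\R^{n+1}_+$ and interior balls $\B_\rho(Z_k)\subset\R^{n+1}_+\setminus\pa'\R^{n+1}_+$, chosen so that their doubles sit inside $\B^+_{5/2}\setminus\overline{\B^+_{3/4}}$ and consecutive balls overlap; the number of balls depends only on $n$. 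On each interior ball the classical Harnack inequality for the $A_2$-weighted degenerate elliptic equation $\mathrm{div}(t^{1-2\sigma}\nabla\cdot)=0$ applied to the nonnegative solution $\tilde U+\tilde V$ gives $\sup\le C\inf$ on the half-sized sub-ball, with $C$ depending only on $n$ and $\sigma$. On each boundary half-ball, combining parts (i) and (ii) of Proposition~\ref{Harnack inequality} with a common exponent $p=p_0\in(0,(n+1)/n]$ yields the same conclusion, with $C$ controlled only by the uniform bounds of the previous step. Chaining these inequalities along the overlapping cover produces a single Harnack inequality on $\overline{\B^+_2\setminus\B^+_1}$ with $C$ independent of $r$, and undoing the rescaling yields \eqref{eq:spherical harnack}.

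The principal technical point is to guarantee that the constant emerging from Proposition~\ref{Harnack inequality} does not deteriorate as $r\to 0^+$. This reduces to the $r$-uniform control of the quantities on which it depends, namely the $L^q$ norms with $q>n/(2\sigma)$ of the rescaled coefficients together with the weighted Sobolev norm of $\tilde U+\tilde V$ on the enlarged shell; both are immediate consequences of the rescaled form of Theorem~\ref{thm2} and the Caccioppoli estimate. The restriction $r<1/4$ ensures that $\B^+_{5/2}\subset\B^+_{1/r}$, so that every estimate above is carried out strictly inside the rescaled domain and the number of balls in the chain, as well as the constants on each ball, are independent of the parameter $r$.
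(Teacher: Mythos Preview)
Your proposal is correct and follows essentially the same approach as the paper: rescale via the critical scaling to a fixed annulus, invoke Theorem~\ref{thm2} to obtain an $r$-uniform bound on the rescaled traces, and then apply Proposition~\ref{Harnack inequality}. You are more explicit than the paper about the Harnack chaining over the annulus and about the Caccioppoli estimate needed to control the $W^{1,2}$ norm on which the constant in Proposition~\ref{Harnack inequality} depends; the paper simply invokes that proposition in one line.
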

\begin{proof}
Let
\[
U_1(X):=r^{\frac{n-2\sigma}{2}}U(rX),\ \ V_1(X):=r^{\frac{n-2\sigma}{2}}V(rX).
\]
It follows that
\[
\begin{cases}
\begin{aligned}
&\mathrm{div}(t^{1-2\sigma} \nabla U_1)=0 &\quad&\mbox{in }\ \ \B_4^{+}\backslash \B_{1/4}^{+},\\
&\mathrm{div}(t^{1-2\sigma} \nabla V_1)=0 &\quad&\mbox{in }\ \ \B_4^{+}\backslash \B_{1/4}^{+},\\
&\frac{\partial U_1}{\partial \nu^\sigma}=\alpha_1u_1^{\frac{n+2\sigma}{n-2\sigma}}+\beta u_1^{\frac{2\sigma}{n-2\sigma}}v_1^{\frac{n}{n-2\sigma}} &\quad&\mbox{on }\ \partial'(\B_4^{+}\backslash \B_{1/4}^{+}),\\
&\frac{\partial V_1}{\partial \nu^\sigma}=\alpha_2v_1^{\frac{n+2\sigma}{n-2\sigma}}+\beta v_1^{\frac{2\sigma}{n-2\sigma}}u_1^{\frac{n}{n-2\sigma}} &\quad&\mbox{on }\ \partial'(\B_4^{+}\backslash \B_{1/4}^{+}).
\end{aligned}
\end{cases}
\]
Next, we are going to prove
\[
|U_1(x,0)|,\ \ |V_1(x,0)|\le C\quad\mbox{on } \ \ \partial'(\B_4^{+}\backslash \B_{1/4}^{+}),
\]
where $C$ is a positive constant depending on $U$, $V$ but independent of $r$.

Indeed, by Theorem \ref{thm2}, there exist positive constants $\varepsilon$ and $C_1$ such that  for $|rx|<\varepsilon$,
\[
|rx|^{\frac{n-2\sigma}{2}}u(rx)\leq C_1,
\]
that is,
\[
|r|^{\frac{n-2\sigma}{2}}u(rx)\leq C_1|x|^{-\frac{n-2\sigma}{2}}\leq C.
\]
For $\varepsilon\leq|rx|\leq 1$, it follows from the continuity of $u$ that there exists a positive constant $C_2$ such that
\[
|r|^{\frac{n-2\sigma}{2}}u(rx)\leq C_2.
\]
It follows  that
\[
|U_1(x,0)|\leq C\quad\mbox{on } \ \ \partial'(\B_4^{+}\backslash \B_{1/4}^{+}),
\]
where $C$ is a positive constant depending on $U$ but independent of $r$.

With the help of Proposition \ref{Harnack inequality}, we have
\[
\sup_{1\le|X|\le 2} (U_1+ V_1)(X)\le C \inf_{1\le|X|\le 2} (U_1+ V_1)(X),
\]
that is,
\[
\sup_{1\le|X|\le 2} r^{\frac{n-2\sigma}{2}}(U+V)(rX)\le C \inf_{1\le|X|\le 2} r^{\frac{n-2\sigma}{2}}(U+V)(rX).
\]
Then we deduce that
\[
\sup_{r\le|X|\le 2r} (U+V)(X)\le C \inf_{r\le|X|\le 2r}(U+V)(X).
\]
where $C$ is another positive constant independent of $r$. Hence, \eqref{eq:spherical harnack} follows.
\end{proof}
\subsection{A Lower Bound and Removability}
We define the Pohozaev integral as
\[
\begin{split}
P(U,V,R)&=\frac{n-2\sigma}{2}\int_{\pa'' \B^+_{R}}t^{1-2\sigma}\left(\frac{\pa U}{\pa\nu} U+\frac{\pa V}{\pa\nu} V\right)
-\frac{R}{2}\int_{\pa'' \B^+_{R}}t^{1-2\sigma}\left(|\nabla U|^2+|\nabla V|^2\right)\\
&\quad+R\int_{\pa'' \B^+_{R}}t^{1-2\sigma}\left(\left|\frac{\pa U}{\pa\nu}\right|^2+\left|\frac{\pa V}{\pa\nu}\right|^2\right)
+\frac{R}{2^*}\int_{\pa B_{R}}\alpha_1u^{2^*}+\alpha_2v^{2^*}+2\beta u^{\frac{2^*}{2}}v^{\frac{2^*}{2}},
\end{split}
\]
where $2^*=\frac{2n}{n-2\sigma}$, $u(\cdot)=U(\cdot, 0)$ and $v(\cdot)=V(\cdot, 0)$ and $\nu$ is the unite outer normal of $\partial \B^+_{R}$.

Since
\begin{equation}\label{EP}
\begin{cases}
\begin{aligned}
&\mathrm{div}(t^{1-2\sigma} \nabla U)=0 &\quad&\mbox{\rm in }\ \ \B_1^{+},\\
&\mathrm{div}(t^{1-2\sigma} \nabla V)=0 &\quad&\mbox{\rm in }\ \ \B_1^{+},\\
&\frac{\partial U}{\partial \nu^\sigma} =\alpha_1u^{\frac{n+2\sigma}{n-2\sigma}}+\beta u^{\frac{2\sigma}{n-2\sigma}}v^{\frac{n}{n-2\sigma}} &\quad&\mbox{\rm on }\ \pa'\B^{+}_{1}\backslash\{0\},\\
&\frac{\partial V}{\partial \nu^\sigma} =\alpha_2v^{\frac{n+2\sigma}{n-2\sigma}}+\beta v^{\frac{2\sigma}{n-2\sigma}}u^{\frac{n}{n-2\sigma}} &\quad&\mbox{\rm on }\  \pa'\B^{+}_{1}\backslash\{0\}.
\end{aligned}
\end{cases}
\end{equation}
Multiplying the first equation in \eqref{EP} by $(U_{X,\lda }-U)^+$, the second equation by $(V_{X,\lda }-V)^+$ and integrating by parts in $\B_{R}^+(X)\backslash \overline{\B_{S}^+(X)}$. By a direct calculations, we  obtain that
\[
P(U,V,R)=P(U,V,S),
\]
which implies that $P(U,V,R)$ is a constant independent of $R$.
\begin{thm}\label{14}
If
\[
\liminf_{|x|\to 0}|x|^{\frac{n-2\sigma}{2}}(u+v)(x)=0,
\]
then
\[
\lim_{|x|\to 0}|x|^{\frac{n-2\sigma}{2}}u(x)=\lim_{|x|\to 0}|x|^{\frac{n-2\sigma}{2}}v(x)=0.
\]
\end{thm}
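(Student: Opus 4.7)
The plan is to exploit the fact, already established just before the theorem, that the Pohozaev integral $P(U,V,R)$ is independent of $R$, and combine this with the spherical Harnack inequality of Lemma \ref{lem:spherical harnack}. First I would use the hypothesis $\liminf_{|x|\to 0}|x|^{(n-2\sigma)/2}(u+v)(x)=0$ to pick a sequence $r_j\to 0$ together with points $\bar x_j$ satisfying $|\bar x_j|=r_j$ and $r_j^{(n-2\sigma)/2}(u+v)(\bar x_j)\to 0$. Applying Lemma \ref{lem:spherical harnack} on the dyadic half-annulus $\B^+_{2r_j}\setminus\overline{\B^+_{r_j/2}}$ upgrades this pointwise smallness to
\[
\varepsilon_j:=r_j^{(n-2\sigma)/2}\sup_{\pa''\B^+_{r_j}}(U+V)\longrightarrow 0.
\]

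Next, I rescale by $\widetilde U_j(X):=r_j^{(n-2\sigma)/2}U(r_jX)$ and similarly $\widetilde V_j$. The critical exponent makes the extension system \eqref{2} invariant and the Pohozaev integral scale-invariant, so $P(U,V,r_j)=P(\widetilde U_j,\widetilde V_j,1)$. On $\pa''\B^+_1$ one has $\widetilde U_j+\widetilde V_j=O(\varepsilon_j)\to 0$, and standard interior estimates for the weighted equation $\mathrm{div}(t^{1-2\sigma}\nabla\,\cdot\,)=0$ on a thin collar of $\pa''\B^+_1$ (combined with the boundary bound provided by Theorem \ref{thm2}, which rules out concentration near the flat face) upgrade this to $\nabla\widetilde U_j,\nabla\widetilde V_j=o(1)$ uniformly on $\pa''\B^+_1$. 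Every term in the definition of $P(\widetilde U_j,\widetilde V_j,1)$ then tends to $0$, and invariance of $P$ under $R$ forces $P(U,V,R)\equiv 0$.

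Now I argue by contradiction: assume $\limsup_{|x|\to 0}|x|^{(n-2\sigma)/2}(u+v)(x)>0$, so there is a sequence $y_k\to 0$ with $|y_k|^{(n-2\sigma)/2}(u+v)(y_k)\ge c_0>0$. Define $\bar U_k(X):=|y_k|^{(n-2\sigma)/2}U(|y_k|X)$ and $\bar V_k$ analogously. Theorem \ref{thm2} together with Lemma \ref{lem:spherical harnack} provides a uniform bound for $\bar U_k+\bar V_k$ on compact subsets of $\overline{\R^{n+1}_+}\setminus\{0\}$; along a subsequence, $\bar U_k\to U_\infty$ and $\bar V_k\to V_\infty$ in $C^2_{\rm loc}$, and the limits form a non-trivial solution of the extended system \eqref{2} on all of $\R^{n+1}_+$ whose trace $(u_\infty,v_\infty)$ is a non-removable singular solution of \eqref{1}. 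By scale invariance $P(U_\infty,V_\infty,R)\equiv P(U,V,R)\equiv 0$.

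Applying Theorem \ref{ZHUYAO}, the pair $(u_\infty,v_\infty)$ is radial and strictly decreasing in $r=|x|$, so substituting the Emden--Fowler change of variables $r=e^{-t}$ reduces \eqref{1} to an autonomous ODE system and the Pohozaev integral becomes a Hamiltonian-type first integral. The main obstacle is to show that for any such non-trivial radial monotone singular pair this Hamiltonian is strictly positive, contradicting $P(U_\infty,V_\infty,R)\equiv 0$. I would do this in the spirit of Caffarelli--Jin--Sire--Xiong \cite{CJSX} and Chen--Lin \cite{CZL}: positivity of the Pohozaev Hamiltonian on non-constant orbits of the coupled Emden--Fowler system, with equality only on the trivial (removable) branch, together with the non-removability of the singularity at $0$ for $(u_\infty,v_\infty)$. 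Once this strict positivity is established, the contradiction with Step 2 yields $\limsup_{|x|\to 0}|x|^{(n-2\sigma)/2}(u+v)(x)=0$, which by the nonnegativity of $u$ and $v$ gives the claimed separate limits.
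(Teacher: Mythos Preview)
Your proposal contains two genuine gaps, both in the second blow-up step.

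First, you assert without justification that the limit $(u_\infty,v_\infty)$ of the rescalings $\bar U_k(X)=|y_k|^{(n-2\sigma)/2}U(|y_k|X)$ is a \emph{non-removable} singular solution of \eqref{1}. All you actually know is that $(u_\infty+v_\infty)(\theta_0)\ge c_0$ at some point $|\theta_0|=1$; nothing prevents the limit from being a smooth entire solution, i.e.\ a bubble from Theorem~\ref{thm1}. For a bubble the Pohozaev integral vanishes identically, so $P(U_\infty,V_\infty,R)\equiv 0$ yields no contradiction, and Theorem~\ref{ZHUYAO} (which requires $\limsup_{x\to 0}(u_\infty+v_\infty)=\infty$) is inapplicable. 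Your argument simply stalls in this scenario.

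Second, even granting a singular radial limit, you explicitly label the strict sign of the Emden--Fowler Hamiltonian as ``the main obstacle'' and defer it to the ``spirit of'' \cite{CJSX,CZL}. For the \emph{coupled fractional} system this is not a citation but a nontrivial open computation; neither reference treats this case, and the paper itself never carries out such an ODE analysis.

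The paper avoids both problems by a different rescaling. It selects $r_i\to 0$ as \emph{local minima} of the spherical-average function $w(r)=r^{(n-2\sigma)/2}\overline{(u+v)}(r)$ (the oscillation between your two sequences guarantees such minima exist with $w(r_i)\to 0$), and normalizes by $(U+V)(r_ie_1)$ rather than by $r_i^{(n-2\sigma)/2}$. The nonlinear boundary term then carries the vanishing factor $\big(r_i^{(n-2\sigma)/2}(U+V)(r_ie_1)\big)^{4\sigma/(n-2\sigma)}\to 0$, so the limit $(W,Z)$ solves the \emph{linear} extension problem with zero Neumann data. B\^ocher's theorem (Proposition~\ref{Bocher}) forces $W=a_1|X|^{2\sigma-n}+b_1$, $Z=a_2|X|^{2\sigma-n}+b_2$; the local-minimum condition gives $a_1+a_2=b_1+b_2$, and the normalization gives $a_1+a_2=\tfrac12$. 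Plugging this explicit function into $P(\cdot,\cdot,1)$ yields a strictly negative constant, contradicting $P\equiv 0$. No ODE analysis and no appeal to Theorem~\ref{ZHUYAO} are needed.
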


\begin{proof}
We just need to prove
\[
\limsup_{|x|\to 0}|x|^{\frac{n-2\sigma}{2}}(u+v)(x)=0,
\]
then
\[
\limsup_{|x|\to 0}|x|^{\frac{n-2\sigma}{2}}u(x)=0,\ \
\limsup_{|x|\to 0}|x|^{\frac{n-2\sigma}{2}}v(x)=0.
\]
The result is to obtain that
\[
\lim_{|x|\to 0}|x|^{\frac{n-2\sigma}{2}}u(x)=\lim_{|x|\to 0}|x|^{\frac{n-2\sigma}{2}}v(x)=0.
\]
We suppose by contradiction that
\be\label{JIA}
\limsup_{|x|\to 0}|x|^{\frac{n-2\sigma}{2}}(u+v)(x)=C>0.
\ee
Hence, there exist two sequences of points $\{x_i\}, \{y_i\}$ satisfying
\[
x_i\to 0,\quad y_i\to 0\quad \mbox{as }i\to\infty,
\]
such that
\[
|x_i|^{\frac{n-2\sigma}{2}}(u+v)(x_i)\to 0,\quad |y_i|^{\frac{n-2\sigma}{2}}(u+v)(y_i)\to C>0\quad \mbox{as }i\to\infty.
\]
Define
\[
w(r):=r^{\frac{n-2\sigma}{2}}(\overline{u+v})(r),
\]
where $(\overline{u+v})(r):=\frac{1}{|\partial B_r|}\int _{\partial B_r}u+vds$ is the spherical average of $u+v$ on $\pa B_r$.

First,  we are going to prove that
\be\label{CL1}
w(r)\leq C,\quad r\in(0,1/2).
\ee
\be\label{CL2}
\liminf_{r\to 0}w(r)=0.
\ee
\be\label{CL3}
\limsup_{r\to 0}w(r)>0.
\ee
We follow the usual convention of denoting by $C$ a general positive constant that may vary from line to line.

\emph{Proof of \eqref{CL1}}: From Theorem \ref{thm2} and the continuous of $u$ and $v$, there exists a positive constant $C$ such that
$$|x|^{\frac{n-2\sigma}{2}}u(x),\ \ |x|^{\frac{n-2\sigma}{2}}v(x)\leq C\quad\mbox{in}\ B_{1/2}\backslash\{0\},$$ then by Lemma \ref{lem:spherical harnack}, for any $x\in B_{1/2}\backslash\{0\}$,
\[
\begin{aligned}
w(r):=&w(|x|)=r^{\frac{n-2\sigma}{2}}\frac{1}{|\partial B_r|}\int _{\partial B_r}u+vds\leq r^{\frac{n-2\sigma}{2}}\sup_{\partial B_r}(u+v)\\
\leq &Cr^{\frac{n-2\sigma}{2}}\inf_{\partial B_r}(u+v)\leq C|x|^{\frac{n-2\sigma}{2}}(u+v)(x)\leq C.
\end{aligned}
\]

\emph{Proof of \eqref{CL2}}: Choose $x_i\rightarrow 0$ which satisfies $|x_i|^{\frac{n-2\sigma}{2}}(u+v)(x_i)\to 0$. It follows that
\[
\begin{aligned}
w(r_i)=&w(|x_i|)=r_i^{\frac{n-2\sigma}{2}}\frac{1}{|\partial B_{r_i}|}\int _{\partial B_{r_i}}u+vds\leq r_i^{\frac{n-2\sigma}{2}}\sup_{\partial B_{r_i}}(u+v)\\
\leq& Cr_i^{\frac{n-2\sigma}{2}}\inf_{\partial B_{r_i}}(u+v)\leq C|x_i|^{\frac{n-2\sigma}{2}}(u+v)(x_i)\rightarrow 0.
\end{aligned}
\]

\emph{Proof of \eqref{CL3}:} We suppose the contrary that $\limsup_{r\to 0}w(r)=0$, implies that $\lim_{r\to 0}w(r)=0$. It follows that for any positive constant $\varepsilon$, there exists  $\delta(\varepsilon)>0$, so that for any $r_i:=|x_i|< \delta$,
\[
|x_i|^{\frac{n-2\sigma}{2}}(\overline{u+v})(x_i)<\varepsilon.
\]
We deduce that
\[
\begin{aligned}
r_i^{\frac{n-2\sigma}{2}}(u+v)(x_i)\leq &r_i^{\frac{n-2\sigma}{2}}\sup_{\partial B_{r_i}}(u+v)\leq C r_i^{\frac{n-2\sigma}{2}}\inf_{\partial B_{r_i}}(u+v)\\
\leq& Cr_i^{\frac{n-2\sigma}{2}}\frac{1}{|\partial B_{r_i}|}\int _{\partial B_{r_i}}u+vds=Cr_i^{\frac{n-2\sigma}{2}}(\overline{u+v})(r_i)\\
\leq& C\varepsilon.
\end{aligned}
\]
from Lemma \ref{lem:spherical harnack}. By the arbitrariness of  $\varepsilon$, we conclude that
\[
\lim_{x\to 0}|x|^{\frac{n-2\sigma}{2}}(u+v)(x)=0.
\]
It is a contradiction with \eqref{JIA}.

A combination of  \eqref{CL1}, \eqref{CL2} and  \eqref{CL3} yields there exists a sequence of positive numbers $\{r_i\}$ converging to $0$ such that
\[
r_i^{\frac{n-2\sigma}{2}}(\overline{u+ v})(r_i)\to 0\quad\mbox{as }i\to\infty,
\]
and $r_i$  are local minimum of $r^{\frac{n-2\sigma}{2}}(\overline{u+ v})(r)$ for every $i$.
Let
\[
W_i(X):=\frac{U(r_iX)}{(U+V)(r_ie_1)},\ \ Z_i(X):=\frac{V(r_iX)}{(U+V)(r_ie_1)},
\]
where $e_1=(1,0,\cdots, 0)\in\R^{n+1}$. A direct calculation gives that
\[
\begin{cases}
\begin{aligned}
&\mathrm{div}(t^{1-2\sigma} \nabla W_i)=0 & \quad &\mbox{in }\R^{n+1}_+,\\
&\mathrm{div}(t^{1-2\sigma} \nabla Z_i)=0 & \quad &\mbox{in }\R^{n+1}_+,\\
&\frac{\pa W_i}{\pa \nu^\sigma} =( r_i^{\frac{n-2\sigma}{2}}(U+V)(r_ie_1))^\frac{4\sigma}{n-2\sigma}\left(\alpha_1w_i^{\frac{n+2\sigma}{n-2\sigma}}+\beta w_i^{\frac{2\sigma}{n-2\sigma}}z_i^{\frac{n}{n-2\sigma}}\right)&\quad &\mbox{on }\partial'\R^{n+1}_+\setminus\{0\},\\
&\frac{\pa Z_i}{\pa \nu^\sigma} =( r_i^{\frac{n-2\sigma}{2}}(U+V)(r_ie_1))^\frac{4\sigma}{n-2\sigma}\left(\alpha_2z_i^{\frac{n+2\sigma}{n-2\sigma}}+\beta z_i^{\frac{2\sigma}{n-2\sigma}}w_i^{\frac{n}{n-2\sigma}}\right)&\quad &\mbox{on }\partial'\R^{n+1}_+\setminus\{0\},\\
\end{aligned}
\end{cases}
\]
where $w_i(x):=W_i(x,0)$, $z_i(x):=Z_i(x,0)$. With the help of Harnack inequality (Lemma \ref{lem:spherical harnack}), it follows that
\[
\begin{aligned}
\sup _{\B^+_{R}\setminus\overline{\B^+_{1/R}}}(W_i+Z_i)(X)&=\sup _{\B^+_{R}\setminus\overline{\B^+_{1/R}}}\frac{(U+V)(r_iX)}{(U+V)(r_ie_1)}
\leq C\inf _{\B^+_{R}\setminus\overline{\B^+_{1/R}}}\frac{(U+V)(r_iX)}{(U+V)(r_ie_1)}\\
&\leq C\inf _{|X|=1}\frac{(U+V)(r_iX)}{(U+V)(r_ie_1)}
\leq C\frac{(U+V)(r_ie_1)}{(U+V)(r_ie_1)}\leq C.
\end{aligned}
\]
Combining with $W_i(X)>0$, $Z_i(X)>0$ , we conclude that $W_i(X)$, $Z_i(X)$ is locally uniformly bounded away from the origin.
Using the Harnack inequality (Lemma \ref{lem:spherical harnack}) again, we have
\[
\begin{aligned}
r_i^{\frac{n-2\sigma}{2}}(U+V)(r_ie_1)=&r_i^{\frac{n-2\sigma}{2}}\frac{1}{|\partial B_{r_i}|}\int_{\partial B_{r_i}}(U+V)(r_ie_1)dx\\
\leq & r_i^{\frac{n-2\sigma}{2}}\frac{1}{|\partial B_{r_i}|}\int_{\partial B_{r_i}}\sup_{\partial B_{r_i}}(u+v)dx\\
\leq  &Cr_i^{\frac{n-2\sigma}{2}}\frac{1}{|\partial B_{r_i}|}\int_{\partial B_{r_i}}\inf_{\partial B_{r_i}}(u+v)dx\\
\leq & Cr_i^{\frac{n-2\sigma}{2}}\frac{1}{|\partial B_{r_i}|}\int_{\partial B_{r_i}}u+vdx\\
=&Cr_i^{\frac{n-2\sigma}{2}}( \overline{u+v})(r_i).
\end{aligned}
\]
Then
\be\label{infty}
r_i^{\frac{n-2\sigma}{2}}((U+V)(r_ie_1))\to 0\quad \mbox{as } i\to\infty.
\ee
By \cite[Proposition 2.3, 2.6, 2.8]{JLX}, there exists some $\al>0$ such that for any $0<r<1<R$,
\[
\|W_j+Z_j\|_{W^{1,2}(t^{1-2\sigma}, \B^+_R\setminus\overline \B^+_r)}+\|W_j+Z_j\|_{C^\al( \B^+_R\setminus\overline \B^+_r)}+\|w_j+z_j\|_{C^{2,\al}( B_R\setminus\overline B_r)}\le C(R,r).
\]
Since $W_j$, $Z_j\geq0$, it follows that
\[
\|W_j\|_{W^{1,2}(t^{1-2\sigma}, \B^+_R\setminus\overline \B^+_r)}+\|W_j\|_{C^\al( \B^+_R\setminus\overline \B^+_r)}+\|w_j\|_{C^{2,\al}( B_R\setminus\overline B_r)}\le C(R,r),
\]
and
\[
\|Z_j\|_{W^{1,2}(t^{1-2\sigma}, \B^+_R\setminus\overline \B^+_r)}+\|Z_j\|_{C^\al( \B^+_R\setminus\overline \B^+_r)}+\|z_j\|_{C^{2,\al}( B_R\setminus\overline B_r)}\le C(R,r),
\]
where $C(R,r)$ is independent of $i$. Then up to a subsequence, $\{W_i\}$, $\{Z_i\}$  converges to a nonnegative function $W$, $Z\in W_{\rm loc}^{1,2}(t^{1-2\sigma}, \overline{\R^{n+1}_+}\setminus\{0\})\cap C^{\al}_{\rm loc}(\overline{\mathbb{R}^{n+1}_+}\setminus\{0\})$ respectively, and
\[
\begin{cases}
\begin{aligned}
&\mathrm{div}(t^{1-2\sigma} \nabla W)=0 & \quad &\mbox{in }\R^{n+1}_+,\\
&\mathrm{div}(t^{1-2\sigma} \nabla Z)=0 & \quad &\mbox{in }\R^{n+1}_+,\\
&\frac{\pa W}{\pa \nu^\sigma} = 0&\quad &\mbox{on }\partial'\R^{n+1}_+\setminus\{0\},\\
&\frac{\pa Z}{\pa \nu^\sigma} = 0&\quad &\mbox{on }\partial'\R^{n+1}_+\setminus\{0\}.
\end{aligned}
\end{cases}
\]
Through a B\^ocher type Theorem ( Proposition \ref{Bocher} ), we deduce that
\[
W(X)=\frac{a_1}{|X|^{n-2\sigma}}+b_1,\ \ Z(X)=\frac{a_2}{|X|^{n-2\sigma}}+b_2,
\]
where $a_1$, $b_1$, $a_2$, $b_2$ are nonnegative constants. Let $w(x):=W(x,0)$ and $z(x):=Z(x,0)$. We know that $w_i(x)\to w(x)$ in $C^2_{\rm loc}(\R^n\setminus\{0\})$ and $z_i(x)\to z(x)$ in $C^2_{ \rm loc}(\R^n\setminus\{0\})$. Besides, since $r_i$  are local minimum of $r^{\frac{n-2\sigma}{2}}(\overline{u+ v})(r)$ for every $i$, we conclude that $r^{\frac{n-2\sigma}{2}}\overline{w+z}(r)$  has a critical point at $r=1$, which implies that $a_1+a_2=b_1+b_2$. Furthermore, we deduce $a_1+a_2=\frac{1}{2}$ by $W(e_1)+Z(e_1)=1$. Now let us compute $P(U, V)$.

Before that, we will prove that
\begin{equation}\label{LJ1}
|\nabla_x U(X)|, \ |\nabla_x V(X)|\le o(1)r_i^{-\frac{n-2\sigma}{2}-1}\quad\mbox{for all }|X|=r_i,
\end{equation}
and
\begin{equation}\label{LJ2}
|t^{1-2\sigma} U_t(X)|, \ |t^{1-2\sigma}V_t(X)|\le o(1)r_i^{-\frac{n-2\sigma}{2}-2\sigma}\quad\mbox{for all }|X|=r_i.
\end{equation}
Like before, we just give a proof of $U$.

Indeed, it follows from Proposition \ref{Tidu} that $|\nabla_x W_i|$, $|\nabla_x Z_i|$ are locally uniformly bounded in $L_{\rm loc}^{\infty}(\overline{\R^{n+1}_+}\setminus\{0\})$ and
$|t^{1-2\sigma}\pa_t W_i|$,   $|t^{1-2\sigma}\pa_t Z_i|$ are locally uniformly bounded in $C_{\rm loc}^{\gamma}(\overline{\R^{n+1}_+}\setminus\{0\})$ for some $\gamma>0$. By the fact that
\be
|\nabla_x W_i|=\frac{|\nabla_x U(r_iX)|}{(U+V)(r_ie_1)}\leq C,
\ee
let $Y:=r_i X$, we have
\[
|\nabla_y U(Y)|\leq Cr_i^{-1}(U+V)(r_ie_1)=Cr_i^{-1-\frac{n-2\sigma}{2}}r_i^{\frac{n-2\sigma}{2}}(U+V)(r_ie_1).
\]
Together with \eqref{infty}, we deduce that
\[
|\nabla_y U(Y)|\le o(1)r_i^{-\frac{n-2\sigma}{2}-1}\quad\mbox{for all }|Y|=r_i.
\]
Then \eqref{LJ1} follows.

On the other hand,
\[
|t^{1-2\sigma}\pa_t W_i|=\frac{|t^{1-2\sigma}\pa_t U(r_iX)|}{(U+V)(r_ie_1)}\leq C,
\]
let $Y:=r_i X$, it follows that
\[
|Y_{n+1}^{1-2\sigma}\pa_{n+1} U(Y)|\le C r_i^{-2\sigma} (U+V)(r_ie_1)=Cr_i^{-2\sigma-\frac{n-2\sigma}{2}}r_i^{\frac{n-2\sigma}{2}}(U+V)(r_ie_1).
\]
Together with \eqref{infty}, we have
\[
|t^{1-2\sigma}U_t(X)|\le o(1)r_i^{-\frac{n-2\sigma}{2}-2\sigma}\quad\mbox{for all }|X|=r_i.
\]
Then \eqref{LJ2} follows.

Thus
\[
P(U,V)=\lim_{i\to\infty} P(U,V,r_i)=0.
\]
Since $P(U,V)$ is a constant, it follows that
\[
P(U,V,r_i)=0\quad\mbox{for all }i.
\]
In addition, for all $i$,
\begin{equation*}
\begin{split}
0&=P(U,V,r_i)=P(r_i^{\frac{n-2\sigma}{2}}U(r_iX), r_i^{\frac{n-2\sigma}{2}}V(r_iX),1)\\
&=P(r_i^{\frac{n-2\sigma}{2}}(U+V)(r_ie_1)W_i,r_i^{\frac{n-2\sigma}{2}}(U+V)(r_ie_1)Z_i,1).
\end{split}
\end{equation*}
As a consequence, we obtain that
\[
\begin{split}
0=&\frac{n-2\sigma}{2}\int_{\pa'' \B^+_{1}}t^{1-2\sigma}\left(\frac{\pa W_i}{\pa\nu} W_i+\frac{\pa Z_i}{\pa\nu} Z_i\right)
-\frac{1}{2}\int_{\pa'' \B^+_{1}}t^{1-2\sigma}\left(|\nabla W_i|^2+|\nabla Z_i|^2\right)\\
&+\int_{\pa'' \B^+_{1}}t^{1-2\sigma}\left(\left|\frac{\pa W_i}{\pa\nu}\right|^2+\left|\frac{\pa Z_i}{\pa\nu}\right|^2\right)\\
&+\frac{1}{2^*}\int_{\pa B_{1}}\left(r_i^{\frac{n-2\sigma}{2}}(U+V)(r_ie_1)\right)^{\frac{4\sigma}{n-2\sigma}}(\alpha_1W_i^{2^*}+\alpha_2Z_i^{2^*}+2\beta W_i^{\frac{2^*}{2}}Z_i^{\frac{2*}{2}}).
\end{split}
\]Sending $i\to\infty$, we have
\[
\begin{split}
0=&\frac{n-2\sigma}{2}\int_{\pa'' \B^+_{1}}t^{1-2\sigma}\left(\frac{\pa W}{\pa\nu} W+\frac{\pa Z}{\pa\nu} Z\right)
-\frac{1}{2}\int_{\pa'' \B^+_{1}}t^{1-2\sigma}\left(|\nabla W|^2+|\nabla Z|^2\right)\\
&+\int_{\pa'' \B^+_{1}}t^{1-2\sigma}\left(\left|\frac{\pa W}{\pa\nu}\right|^2+\left|\frac{\pa Z}{\pa\nu}\right|^2\right)
=C,
\end{split}
\]
if $a_1\cdot a_2=0$, together with $a_1+a_2=1/2$, without loss of generality,we assume that $a_1=0$ and $a_2=\frac{1}{2}$,  then $C=-\frac{(n-2\sigma)^2}{8}\int_{\pa'' \B^+_{1}}t^{1-2\sigma}$; otherwise,
$C=-\frac{(n-2\sigma)^2}{2}\int_{\pa'' \B^+_{1}}t^{1-2\sigma}a_1^2$, which is a contradiction. Then, we obtain
\[
\limsup_{|x|\to 0}|x|^{\frac{n-2\sigma}{2}}(u+v)(x)=0.
\]
\end{proof}
\begin{thm}\label{15}
If
\[
\lim_{|x|\to 0}|x|^{\frac{n-2\sigma}{2}}u(x)=\lim_{|x|\to 0}|x|^{\frac{n-2\sigma}{2}}v(x)=0
\]
then both $u$, $v$ can be extended as a  continuous function at the origin $0$.
\end{thm}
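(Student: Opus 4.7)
My plan is to first upgrade the asymptotic decay into a uniform $L^\infty$-bound for $u+v$ on a punctured neighbourhood of $0$, and then deduce the continuous extension from the $C^\alpha$-regularity of Proposition~\ref{Harnack inequality}(iii). By Theorem~\ref{thm2} and the hypothesis, for every $\varepsilon>0$ there exists $r_\varepsilon>0$ such that
\[
(u+v)(x)\leq \varepsilon|x|^{-(n-2\sigma)/2}\quad\text{in } B_{r_\varepsilon}\setminus\{0\};
\]
combined with~\eqref{DAXIAO}, the right-hand sides of~\eqref{91} are then pointwise dominated on $B_{r_\varepsilon}$ by $C\varepsilon^{p}|x|^{-(n+2\sigma)/2}$, where $p:=(n+2\sigma)/(n-2\sigma)>1$.

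The core step is the following self-improvement. Writing $U$ on $\B_{r_\varepsilon}^+$ as the sum of a Green-type volume potential of the Neumann datum on $\partial'\B^+_{r_\varepsilon}$ and a $\sigma$-harmonic Dirichlet extension from the trace on $\partial''\B^+_{r_\varepsilon}$ (and likewise for $V$), I would control the Dirichlet part by the maximum principle together with the continuity of $u,v$ away from $0$, and the volume part by the critical Riesz-potential identity
\[
\int_{B_{r_\varepsilon}} |x-y|^{-(n-2\sigma)}|y|^{-(n+2\sigma)/2}\,dy \leq C|x|^{-(n-2\sigma)/2}.
\]
This yields $(u+v)(x)\leq C_1\varepsilon^{p}|x|^{-(n-2\sigma)/2}+C_2$ in $B_{r_\varepsilon/2}\setminus\{0\}$. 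Taking $\varepsilon$ so small that $C_1\varepsilon^{p-1}<1/2$, and absorbing $C_2$ into the leading term on a further restricted ball, one obtains on some smaller radius $r_1\leq r_\varepsilon/2$
\[
(u+v)(x)\leq \varepsilon_1 |x|^{-(n-2\sigma)/2}\quad\text{in } B_{r_1}\setminus\{0\},\qquad \varepsilon_1:=2C_1\varepsilon^{p}.
\]
Iterating produces $\varepsilon_k=C\varepsilon_{k-1}^{p}\to 0$ doubly exponentially; after finitely many steps the combined control $\varepsilon_k|x|^{-(n-2\sigma)/2}\leq C_2$ on $B_{r_k}$, together with Lemma~\ref{lem:spherical harnack}, forces $(u+v)$ to be uniformly bounded on a fixed punctured neighbourhood of $0$.

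Once $u,v\in L^\infty$ near $0$, the Neumann data in~\eqref{91} become bounded, so Proposition~\ref{Harnack inequality}(iii) yields $U,V\in C^\alpha(\overline{\B^+_{r^\ast/2}})$ for some $\alpha\in(0,1)$, and the desired continuous extension of $u,v$ at $0$ follows by restricting to the trace. The principal obstacle is the critical scaling of the problem: a single Riesz-potential estimate at the critical decay exponent $(n-2\sigma)/2$ cannot lower the exponent, only shrink the coefficient from $\varepsilon$ to $\varepsilon^{p}$. The iteration closes only because the super-geometric decay of $\varepsilon_k$ eventually overwhelms the geometric growth of the singular factor $|x|^{-(n-2\sigma)/2}$, breaking criticality in finitely many iterations.
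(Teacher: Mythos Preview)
Your bootstrap via Riesz potentials is a genuinely different route from the paper's, but the iteration as written does not close. The step ``absorbing $C_2$ into the leading term on a further restricted ball'' forces you to shrink radii: to have $C_2 \le C_1\varepsilon_{k-1}^{p}|x|^{-(n-2\sigma)/2}$ you need $r_k \lesssim (\varepsilon_{k-1}^{p})^{2/(n-2\sigma)}\to 0$. Nothing terminates in finitely many steps, and your assertion ``$\varepsilon_k|x|^{-(n-2\sigma)/2}\le C_2$ on $B_{r_k}$'' is the \emph{opposite} of the inequality you just imposed to perform the absorption. Patching the annuli $B_{r_{k-1}/2}\setminus B_{r_k}$ does not give a uniform bound either: the Dirichlet constant you pick up at step $k$ from the trace on $\partial''\B^+_{r_{k-1}}$ is controlled only through Harnack by the bound on the previous annulus, so it can grow geometrically in $k$ while the $\varepsilon_k$ shrink. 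A repairable variant iterates on a \emph{fixed} ball after a conformal rescaling that makes the trace on $\partial''\B^+_1$ small; then the bounded part stays bounded and the singular coefficient tends to zero. You would also need to justify the kernel bound $G(x,y)\le C|x-y|^{-(n-2\sigma)}$ for the mixed Dirichlet--Neumann problem on the half-ball with weight $t^{1-2\sigma}$, and, once $u,v\in L^\infty$, still check that $(U,V)$ is a weak solution in $W^{1,2}(t^{1-2\sigma})$ \emph{across} the origin before invoking Proposition~\ref{Harnack inequality}(iii).

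The paper bypasses iteration entirely by a barrier argument: for any fixed $\alpha\in(0,\tfrac{n-2\sigma}{2})$ it sets $\Phi_\mu(X)=|X|^{-\mu}-\delta t^{2\sigma}|X|^{-\mu-2\sigma}$ and takes $\Phi=C\Phi_\alpha+\varepsilon\Phi_\beta$ with $\beta=\tfrac{n-2\sigma}{2}+1$. The hypothesis makes $(\alpha_1+\alpha_2+2\beta)(u+v)^{4\sigma/(n-2\sigma)}\le 2\delta\sigma|x|^{-2\sigma}$ on a small ball, so $\Phi-(U+V)$ is a supersolution of a linear problem to which the maximum principle of Proposition~\ref{A.2} applies; sending $\varepsilon\to 0$ gives $U+V\le C(\alpha)|X|^{-\alpha}$ in one stroke. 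This subcritical bound, together with matching gradient estimates obtained by rescaling, puts $U,V\in W^{1,2}(t^{1-2\sigma},\B^+_\tau)$ by direct integration; a cutoff argument then extends the weak formulation across $0$, and Proposition~\ref{Harnack inequality}(iii) yields the H\"older extension. The barrier approach is both shorter and avoids the delicate bookkeeping of constants at the critical exponent.
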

\begin{proof}
Via the Harnack inequality ( Lemma \ref{lem:spherical harnack} ), it is easy to see that
\[
\lim_{|X|\to 0}|X|^{\frac{n-2\sigma}{2}}U(X)=0,
\lim_{|X|\to 0}|X|^{\frac{n-2\sigma}{2}}V(X)=0.
\]
For $0<\mu\le n-2\sigma$ and $\delta>0$, let
\[
\Phi_\mu(X):=|X|^{-\mu}-\delta t^{2\sigma}|X|^{-(\mu+2\sigma)}.
\]
By a direct calculation, it follows that
\[
\mathrm{div}(t^{1-2\sigma}\nabla \Phi_\mu(X))=t^{1-2\sigma}|X|^{-(\mu+2)}\left(-\mu(n-2\sigma-\mu)+\frac{\delta(\mu+2\sigma)(n-\mu)t^{2\sigma}}{|X|^{2\sigma}}\right),
\]
and
\[
-\dlim_{t\rightarrow 0^+}t^{1-2\sigma}\pa_t \Phi_\mu(x,t)=2\delta\sigma|x|^{-(\mu+2\sigma)}= 2\delta\sigma|x|^{-2\sigma}\Phi_{\mu}(x,0).
\]
Let $\alpha\in (0,\frac{n-2\sigma}{2})$ be fixed, $\beta=\frac{n-2\sigma}{2}+1$ and $\Phi=C\Phi_\alpha+\va\Phi_\beta$, where $C, \va$ are positive constants. We can choose $\delta$ small ( depending on $\al$ ) and $\varepsilon$ small if needed such that
\[
\begin{cases}
\begin{aligned}
&\mathrm{div}(t^{1-2\sigma} \nabla \Phi)\le 0 & \quad &\mbox{in }\B^+_2,\\
&\frac{\pa \Phi}{\pa \nu^\sigma} = 2\delta\sigma|x|^{-2\sigma}\Phi(x,0)&\quad &\mbox{on }\partial'\B^+_2\setminus\{0\}.
\end{aligned}
\end{cases}
\]
Due to
\[
\lim_{|x|\to 0}|x|^{\frac{n-2\sigma}{2}}u(x)=\lim_{|x|\to 0}|x|^{\frac{n-2\sigma}{2}}v(x)=0,
\]
we can choose $\tau$ small to ensure that for all $0<|x|<\tau$,
\be\label{SS}
a(x)=(\alpha_1+\alpha_2+2\beta)(u+v)^{\frac{4\sigma}{n-2\sigma}}\le 2\delta\sigma|x|^{-2\sigma}.
\ee
Combining this, we obtain that
\begin{equation*}
\begin{split}
&2\delta\sigma|x|^{-2\sigma}\Phi(x,0)-\alpha_1u^{\frac{n+2\sigma}{n-2\sigma}}-\beta u^{\frac{2\sigma}{n-2\sigma}}v^{\frac{n}{n-2\sigma}}-\alpha_2v^{\frac{n+2\sigma}{n-2\sigma}}-\beta v^{\frac{2\sigma}{n-2\sigma}}u^{\frac{n}{n-2\sigma}}\\
\geq &2\delta\sigma|x|^{-2\sigma}\Phi(x,0)-(\alpha_1+\alpha_2+2\beta)(u+v)^{\frac{n+2\sigma}{n-2\sigma}}\\
\geq &2\delta\sigma|x|^{-2\sigma}\Phi(x,0)-2\delta\sigma|x|^{-2\sigma}(u+v)\\
= &2\delta\sigma|x|^{-2\sigma}(\Phi-U-V)(x,0)
\end{split}
\end{equation*}
Then we have
\[
\begin{cases}
\begin{aligned}
&\mathrm{div}(t^{1-2\sigma} \nabla (\Phi-U-V))\le 0 & \quad &\mbox{in }\B^+_\tau,\\
&\frac{\pa (\Phi-U-V)}{\pa \nu^\sigma}\ge 2\delta\sigma|x|^{-2\sigma}(\Phi-U-V)(x,0)&\quad &\mbox{on }\partial'\B^+_\tau\setminus\{0\}.
\end{aligned}
\end{cases}
\]

For every $\va>0$, we have that $\Phi\ge U+V$ near $0$. We can choose $C$ ( depending on $\al$ ) sufficiently large so that $\Phi\ge U+V$ on $\pa'' \B_\tau$. Hence, by Proposition \ref{A.2} (we can choose $\delta$ even smaller if needed), it follows that
\[
U+V\le \Phi\quad\quad\mbox{in }\ \ \B_\tau^+.
\]
After sending $\va\to 0$, we have
\[
 U+V\le  C(\al)\Phi_\alpha\le C(\al)|X|^{-\al}\quad\quad\mbox{in }\ \ \B_\tau^+.
\]
Because $U$, $V>0$, which implies  that
\be\label{O}
 U,V \le  C(\al)|X|^{-\al}\quad\quad\mbox{in }\ \ \B_\tau^+,
\ee
Furthermore, we want to prove that
\be\label{T}
 |\nabla_x U(X)|, |\nabla_x V(X)|\le C(\al)|X|^{-\al-1}\quad\quad\mbox{in }\ \ \B_\tau^+
\ee
and
\be\label{TT}
 |t^{1-2\sigma}\pa_t U(X)|,  |t^{1-2\sigma}\pa_t V(X)|\le C(\al)|X|^{-\al-2\sigma}\quad\quad\mbox{in }\ \ \B_\tau^+.
\ee

Indeed, for all $r\in (0,\tau)$, define
\[
\widetilde{U}(X):=r^{\al}U(rX),\ \ \widetilde{V}(X):=r^{\al}V(rX)\quad\quad\mbox{in }\ \ \B_1^{+}\backslash \B_{1/4}^{+},
\]
Via a direct calculation, we have
\be
\begin{cases}
\begin{aligned}
&\mathrm{div}(t^{1-2\sigma} \nabla \widetilde{U})=0 &\quad&\mbox{in }\ \ \B_1^{+}\backslash \B_{1/4}^{+},\\
&\frac{\partial \widetilde{U}}{\partial \nu^\sigma}=r^{2\sigma+\alpha-\frac{n+2\sigma}{n-2\sigma}\al}\left(\alpha_1\widetilde{u}^{\frac{n+2\sigma}{n-2\sigma}}+\beta \widetilde{u}^{\frac{2\sigma}{n-2\sigma}}\widetilde{v}^{\frac{n}{n-2\sigma}}\right) &\quad&\mbox{on }\ \ \partial'(\B_1^{+}\backslash \B_{1/4}^{+}).
\end{aligned}
\end{cases}
\ee
Combining with \eqref{O}, it follows that
\[
|\widetilde{U}(X)|=|r^{\al}U(rX)|\leq C(\al)r^{\al}|rX|^{-\al}\leq C(\al).
\]
By Proposition \ref{prop:guji1}, Proposition \ref{prop:guji} and Proposition \ref{Tidu}, we deduce that
\[
|\nabla_x \widetilde{U}|\leq C(\al),
\]
and
\[
|t^{1-2\sigma}\partial_t \widetilde{U}|\leq C(\al),
\]

Now, let $Y:=rX\in \B_\tau^+\setminus\{0\}$, it follows that
\[
|\nabla_x \widetilde{U}|=|r^{\al}\nabla_x U(rX)|=|r^{\al+1}\nabla_y U(Y)|\leq C(\al),
\]
which implies that
\begin{equation*}
|\nabla_y U(Y)|\leq C(\al)r^{-\al-1}=C(\al)|rX|^{-\al-1}|X|^{\al+1}\leq C(\al)|Y|^{-\al-1},
\end{equation*}
Then estimate \eqref{T} follows. Estimate \eqref{TT} follows by the similar argument.

A consequence of \eqref{T} and \eqref{TT} is
\[
U, \ \ V \in W^{1,2}(t^{1-2\sigma},\B^+_\tau).
\]Indeed, using $\al<\frac{n-2\sigma}{2}$, it is elementary to verify that
\[
\begin{split}
\int_{\B^+_\tau}t^{1-2\sigma}U^2dX\leq&C(\al)\int_0^{\tau}\int_{B_\tau}t^{1-2\sigma}|X|^{-2\al}dX\\
\leq&C(\al)\int_0^{\tau}t^{1-2\sigma}dt\int_{B_\tau}|x|^{-2\al}dx
< +\infty,
\end{split}
\]
and
\[
\begin{split}
\int_{\B^+_\tau}t^{1-2\sigma}|\nabla U|^2dX=&\int_{\B^+_{\tau}}t^{1-2\sigma}(|\nabla_x U(X)|^2+|\nabla_t U(X)|^2)dX\\
\leq&C(\al)\int_{\B^+_\tau}t^{1-2\sigma}|X|^{-2\al-2}dX+C(\al)\int_{\B^+_\tau}t^{2\sigma-1}|X|^{-2\al-4\sigma}dX
\end{split}
\]
Then,
\[
\begin{split}
\int_{\B^+_\tau}t^{1-2\sigma}|X|^{-2\al-2}dX\leq&\int_0^{\tau}t^{1-2\sigma}\int_{x^2+t^2\leq \tau^2}(|x|^2+t^2)^{-\al-1}dxdt\\
=&\int_{0}^{\tau}t^{1-2\sigma}\int_0^{\sqrt{\tau^2-t^2}}\rho^{n-1}(\rho^2+t^2)^{-\al-1}d\rho dt\\
=&\frac{1}{2}\int_{0}^{\tau}t^{1-2\sigma}\int_{t^2}^{\tau^2}\rho^{n-2}(\rho^2+t^2)^{-\al-1}d(\rho^2+t^2) dt\\
\leq&\int_{0}^{\tau}t^{1-2\sigma}\int_{t^2}^{\tau^2}(\rho^2+t^2)^{\frac{n-2}{2}-\al-1}d(\rho^2+t^2) dt\\
=&\left(\frac{n-2}{2}-\al\right)\left[\int_{0}^{\tau}t^{1-2\sigma}\tau^{n-2-2\al}dt-\int_{0}^{\tau}t^{n-1-2\al-2\sigma}dt\right]\\
\leq &C \tau^{n-2\sigma-2\al}<+\infty.
\end{split}
\]
\[
\begin{split}
\int_{\B^+_\tau}t^{2\sigma-1}|X|^{-2\al-4\sigma}dX\leq&\int_0^{\tau}t^{2\sigma-1}\int_{x^2+t^2\leq \tau^2}|X|^{-2\al-4\sigma}dX\\
=&\int_{0}^{\tau}t^{2\sigma-1}\int_0^{\sqrt{\tau^2-t^2}}\rho^{n-1}(\rho^2+t^2)^{-\al-2\sigma}d\rho dt\\
\leq&\int_0^{\tau}t^{2\sigma-1}\int_0^{\sqrt{\tau^2-t^2}}(\rho^2+t^2)^{\frac{n-2}{2}-\al-2\sigma}d(\rho^2+t^2) dt\\
=&\left(\frac{n}{2}-\al-2\sigma\right)\left(\tau^{n-2\al-4\sigma}\int_0^{\tau}t^{2\sigma-1}dt-\int_0^{\tau}t^{n-2\sigma-2\al-1}dt\right)\\
\leq &C \tau^{n-2\sigma-2\al}<+\infty.
\end{split}
\]
Hence, $$\int_{\B^+_\tau}t^{1-2\sigma}|\nabla U|^2dX\leq\infty.$$

On the other hand, for $\va>0$ small, let $\eta_\va$ be a smooth cut-off function satisfying $\eta\equiv 0$ in $\B_\va$, $\eta\equiv 1$ outside of $\B_{2\va}$ and $\nabla \eta_\va\le C\va^{-1}$. Let $\varphi\in C_c^{\infty}(\B_\tau^+\cup\pa'\B_\tau^+)$. Multiplying the equation in \eqref{BI} by $\varphi\eta_\va$ and integrating by parts lead to
\[
\int_{\B_\tau^+} t^{1-2\sigma}\nabla U\nabla (\varphi\eta_\va)=\int_{\pa'\B_\tau^+} \left(\alpha_1U^{\frac{n+2\sigma}{n-2\sigma}}+\beta U^{\frac{2\sigma}{n-2\sigma}}V^{\frac{n}{n-2\sigma}}\right)\varphi\eta_\va.
\]
From
\[
\left(\alpha_1U^{\frac{n+2\sigma}{n-2\sigma}}+\beta U^{\frac{2\sigma}{n-2\sigma}}V^{\frac{n}{n-2\sigma}}\right)\varphi\eta_\va\leq 2(\al_1+\beta)(U+V)^{\frac{n+2\sigma}{n-2\sigma}}\varphi,
\]
and
\[
\int_{\pa'\B_\tau^+}(U+V)^{\frac{n+2\sigma}{n-2\sigma}}\varphi\leq \left(\int_{\pa'\B_\tau^+}(U+V)^{\frac{2n}{n-2\sigma}}\right)^{\frac{n+2\sigma}{2n}}\left(\int_{\pa'\B_\tau^+}\varphi^{\frac{2n}{n-2\sigma}}\right)^{\frac{n-2\sigma}{2n}},
\]
which implies that
$$\int_{\pa'\B_\tau^+}(U+V)^{\frac{n+2\sigma}{n-2\sigma}}\varphi<\infty.$$
By the dominated convergence theorem and sending $\va\to 0$, we have
\[
\int_{\B_\tau^+} t^{1-2\sigma}\nabla U\nabla \varphi=\int_{\pa'\B_\tau^+} \left(\alpha_1U^{\frac{n+2\sigma}{n-2\sigma}}+\beta U^{\frac{2\sigma}{n-2\sigma}}V^{\frac{n}{n-2\sigma}}\right)\varphi.
\]
It follows that
\[
\begin{cases}
\begin{aligned}
&\mathrm{div}(t^{1-2\sigma} \nabla U)=0 & \quad &\mbox{in }\B_\tau^+,\\
&\mathrm{div}(t^{1-2\sigma} \nabla V)=0 & \quad &\mbox{in }\B_\tau^+,\\
&\frac{\pa U}{\pa \nu^\sigma} = \alpha_1U^{\frac{n+2\sigma}{n-2\sigma}}+\beta U^{\frac{2\sigma}{n-2\sigma}}V^{\frac{n}{n-2\sigma}} &\quad& \mbox{on }\pa' \B_\tau,\\
&\frac{\pa V}{\pa \nu^\sigma} = \alpha_2V^{\frac{n+2\sigma}{n-2\sigma}}+\beta V^{\frac{2\sigma}{n-2\sigma}}U^{\frac{n}{n-2\sigma}} &\quad& \mbox{on }\pa' \B_\tau,
\end{aligned}
\end{cases}
\]
with the help of  Proposition \ref{Harnack inequality}, which ensures that both $u$ and $v$ are H\"{o}lder continuous at $0$.
\end{proof}

\begin{proof}[Proof of Theorem \ref{thm:a}]
It follows from Theorem \ref{thm2}, \ref{14} and \ref{15}.
\end{proof}

\section{Appendix}\label{F5}
\begin{prop}\label{QL}
There exists a constant $C_3$ depending on $n$, $\beta$,  $\sigma$, $x$ such that
\[
I_2\leq C_3\int_{B_{\mu}(x)\backslash B_{\lda}(x)}\left((u_{x,\lda }-u)^++(v_{x,\lda }-v)^+\right)(u_{x,\lda }-u)^+,
\]
where $I_2$ is defined in \eqref{4}.
\end{prop}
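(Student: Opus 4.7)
The strategy is an algebraic decomposition of the product difference combined with a case-by-case mean value estimate for power functions. Write $a := u_{x,\lambda}$, $b := u$, $c := v_{x,\lambda}$, $d := v$ and set $p := \frac{2\sigma}{n-2\sigma}$, $q := \frac{n}{n-2\sigma}$, so that $q > 1$ always while $p$ may lie on either side of $1$ depending on $\sigma$. I will use the splitting
\[
a^p c^q - b^p d^q = d^q(a^p - b^p) + a^p(c^q - d^q).
\]
This particular decomposition is chosen because the two coefficients $d^q$ and $a^p$ are uniformly bounded on the annulus $B_\mu(x)\setminus B_\lambda(x)$: $v$ is continuous on the compact $\overline{B_\mu(x)}$, and for $y$ in the annulus $u_{x,\lambda}(y) = (\lambda/|y-x|)^{n-2\sigma} u(y^\ast) \leq \sup_{B_\mu(x)} u$, since $|y-x|\geq \lambda$ and the image $y^\ast\in B_\lambda(x)\subset B_\mu(x)$. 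An analogous bound holds for $c = v_{x,\lambda}$.

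On the set $\{a > b\}$, which contains the support of $(a-b)^+$, I estimate each difference. Since $q > 1$, when $c \geq d$ the mean value theorem gives $c^q - d^q \leq q c^{q-1}(c-d) \leq C(c-d)^+$ using the $L^\infty$ bound on $c$; when $c < d$, $a^p(c^q - d^q)\leq 0 \leq C(c-d)^+$. Hence in every case $a^p(c^q - d^q) \leq C(c-d)^+$. For $a^p - b^p$ on $\{a > b\}$, the sign of $p-1$ matters: if $p\geq 1$, convexity yields $a^p - b^p \leq p a^{p-1}(a-b) \leq C(a-b)^+$; if $0 < p < 1$, concavity yields $a^p - b^p \leq p b^{p-1}(a-b)$, and here $b^{p-1} = u^{p-1}$ is bounded above because $u$ is continuous and strictly positive on the compact $\overline{B_\mu(x)}$, so $\inf_{\overline{B_\mu(x)}} u > 0$. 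In either branch $d^q(a^p-b^p) \leq C(a-b)^+$.

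Combining the two bounds gives the pointwise inequality
\[
a^p c^q - b^p d^q \leq C\bigl((a-b)^+ + (c-d)^+\bigr) \quad\text{on } \{a > b\},
\]
with the constant $C$ depending only on $n$, $\sigma$, $\beta$ and on $\|u\|_{L^\infty(B_\mu(x))}$, $\|1/u\|_{L^\infty(B_\mu(x))}$, and $\|v\|_{L^\infty(B_\mu(x))}$, all of which are controlled by $x$ (note that $\mu = \mu(x)$ is fixed in the context of Lemma \ref{lemma1}). Multiplying by $(a-b)^+$ and integrating over $\partial'D = B_\mu(x)\setminus B_\lambda(x)$ then yields the desired estimate with $C_3 := \beta C$. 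The only delicate point in the whole argument is the case split on the sign of $p-1$; everything else is routine, and no analytic ingredient is used beyond the pointwise boundedness of $u$ and $v$ inherited from their $C^2$ regularity and strict positivity.
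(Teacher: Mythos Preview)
Your proof is correct and takes a genuinely different route from the paper's. You use the additive telescoping $a^p c^q - b^p d^q = d^q(a^p - b^p) + a^p(c^q - d^q)$ and then the mean value theorem for $t^p$ and $t^q$, splitting on whether $p = \frac{2\sigma}{n-2\sigma}$ is above or below $1$ to know whether the derivative is controlled by the larger or the smaller argument. The paper instead uses a multiplicative trick: on $\{u \le u_{x,\lambda}\}$ it lower-bounds $u^{\frac{2\sigma}{n-2\sigma}} v^{\frac{n}{n-2\sigma}}$ by $u_{x,\lambda}^{\frac{2\sigma}{n-2\sigma}} v_{x,\lambda}^{\frac{n}{n-2\sigma}} (u/u_{x,\lambda})^{\frac{n+2\sigma}{n-2\sigma}}$ (and an analogous two-factor version when $v \le v_{x,\lambda}$), then applies $1 - t^\alpha \le \alpha(1-t)$ with a single exponent $\alpha = \frac{n+2\sigma}{n-2\sigma} > 1$. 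This produces coefficients like $u_{x,\lambda}^{\frac{4\sigma-n}{n-2\sigma}} v_{x,\lambda}^{\frac{n}{n-2\sigma}}$, whose boundedness again relies on two-sided bounds for $u,v$ at the reflected points $y_\lambda \in \overline{B}_\lambda(x)$. Your version is more elementary and transparent; the paper's version avoids the $p\gtrless 1$ case split at the price of a slightly less obvious inequality. Both rely on the same analytic input, namely that $u,v$ are continuous and strictly positive on a fixed compact neighbourhood of $x$, so neither buys a quantitatively better constant.
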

\begin{proof}
Notice that for all $ y\in B_{\mu}(x)\backslash B_{\lda}(x)$,
\begin{equation}\label{FUDE}
u_{x,\lda }(y)^{\frac{4\sigma-n}{n-2\sigma}}v_{x,\lda }(y)^{\frac{n}{n-2\sigma}}=\left(\frac{\lda }{|y-x|}\right)^{4\sigma}u(y_{\lda })^{\frac{4\sigma-n}{n-2\sigma}}v(y_{\lda })^{\frac{n}{n-2\sigma}}
\leq u(y_{\lda})^{\frac{4\sigma-n}{n-2\sigma}}v(y_{\lda })^{\frac{n}{n-2\sigma}},
\end{equation}
where $y_{\lda}:=x+\frac{\lda^2(y-x)}{|y-x|^2}$. Combining $y_{\lda }\in B_{\lda}(x)\subset\overline{B}_{10}(x)$
and $u\in C^{2}(\R^{n})$, we obtain that there exist positive constants $C_{21}$, $C_{22}$ depending on $x$, such that for $y_{\lda }\in \overline{B}_{\lda}(x)\subset\overline{B}_{10}(x)$,
\[
C_{21}\leq u(y_{\lda }),v(y_{\lda })\leq C_{22}.
\]
A consequence of \eqref{FUDE} is that for all $y_{\lda }\in \overline{B}_{\lda}(x)$, there exists a positve constant $C_2$ depending on $x$, such that
\begin{equation}\label{RM}
u_{x,\lda }(y)^{\frac{2\sigma}{n-2\sigma}}v_{x,\lda }(y)^{\frac{2\sigma}{n-2\sigma}},\ \ u_{x,\lda }(y)^{\frac{4\sigma-n}{n-2\sigma}}v_{x,\lda }(y)^{\frac{n}{n-2\sigma}}\leq C_2.
\end{equation}
We only need to consider $u\leq u_{x,\lda }$. If $v_{x,\lda }\leq v$, it follows that
\begin{equation*}
\begin{split}
u(y)^{\frac{2\sigma}{n-2\sigma}}v(y)^{\frac{n}{n-2\sigma}}\geq& u(y)^{\frac{2\sigma}{n-2\sigma}}v(y)_{x,\lda }^{\frac{n}{n-2\sigma}}\left( \frac{u(y)}{u_{x,\lda }(y)} \right)^{\frac{n}{n-2\sigma}}\\
=&u_{x,\lda }(y)^{\frac{2\sigma}{n-2\sigma}}\left( \frac{u(y)}{u_{x,\lda }(y)} \right)^{\frac{2\sigma}{n-2\sigma}}v_{x,\lda }(y)^{\frac{n}{n-2\sigma}}\left( \frac{u(y)}{u_{x,\lda }(y)} \right)^{\frac{n}{n-2\sigma}}\\
=&u_{x,\lda }(y)^{\frac{2\sigma}{n-2\sigma}}v_{x,\lda }(y)^{\frac{n}{n-2\sigma}}\left( \frac{u(y)}{u_{x,\lda }(y)} \right)^{\frac{n+2\sigma}{n-2\sigma}},
\end{split}
\end{equation*}
as a result,
\begin{equation*}\label{DI1}
\begin{split}
&u_{x,\lda }(y)^{\frac{2\sigma}{n-2\sigma}}v_{x,\lda }(y)^{\frac{n}{n-2\sigma}}- u(y)^{\frac{2\sigma}{n-2\sigma}}v(y)^{\frac{n}{n-2\sigma}}\\
\leq& u_{x,\lda }(y)^{\frac{2\sigma}{n-2\sigma}}v_{x,\lda }(y)^{\frac{n}{n-2\sigma}}-u_{x,\lda }(y)^{\frac{2\sigma}{n-2\sigma}}v_{x,\lda }(y)^{\frac{n}{n-2\sigma}}\left( \frac{u(y)}{u_{x,\lda }(y)} \right)^{\frac{n+2\sigma}{n-2\sigma}}\\
=&u_{x,\lda }(y)^{\frac{2\sigma}{n-2\sigma}}v_{x,\lda }v^{\frac{n}{n-2\sigma}}\left(1-\left( \frac{u(y)}{u_{x,\lda }(y)} \right)^{\frac{n+2\sigma}{n-2\sigma}}\right)\\
\leq&\frac{n+2\sigma}{n-2\sigma}u_{x,\lda }(y)^{\frac{2\sigma}{n-2\sigma}}v_{x,\lda }(y)^{\frac{n}{n-2\sigma}}\left(1-\frac{u(y)}{u_{x,\lda }(y)} \right)\\
=&\frac{n+2\sigma}{n-2\sigma}u_{x,\lda (y)}^{\frac{4\sigma-n}{n-2\sigma}}v_{x,\lda }(y)^{\frac{n}{n-2\sigma}}\left(u_{x,\lda }(y)-u (y)\right).
\end{split}
\end{equation*}
If $v\leq v_{x,\lda }$, we can get that
\begin{equation*}
\begin{split}
&u(y)^{\frac{2\sigma}{n-2\sigma}}v(y)^{\frac{n}{n-2\sigma}}\geq u(y)^{\frac{2\sigma}{n-2\sigma}}\left( \frac{v(y)}{v_{x,\lda }(y)} \right)^{\frac{2\sigma}{n-2\sigma}}v(y)^{\frac{n}{n-2\sigma}}\left( \frac{u(y)}{u_{x,\lda }(y)} \right)^{\frac{n}{n-2\sigma}}\\
=&u_{x,\lda }(y)^{\frac{2\sigma}{n-2\sigma}}\left( \frac{u(y)}{u_{x,\lda }(y)} \right)^{\frac{2\sigma}{n-2\sigma}}\left( \frac{v(y)}{v_{x,\lda }(y)} \right)^{\frac{2\sigma}{n-2\sigma}}v_{x,\lda }(y)^{\frac{n}{n-2\sigma}}\left( \frac{v(y)}{v_{x,\lda }(y)} \right)^{\frac{n}{n-2\sigma}}\left( \frac{u(y)}{u_{x,\lda }(y)} \right)^{\frac{n}{n-2\sigma}}\\
=&u_{x,\lda }(y)^{\frac{2\sigma}{n-2\sigma}}v_{x,\lda }(y)^{\frac{n}{n-2\sigma}}\left( \frac{u(y)}{u_{x,\lda }(y)} \right)^{\frac{n+2\sigma}{n-2\sigma}}\left( \frac{v(y)}{v_{x,\lda }(y)} \right)^{\frac{n+2\sigma}{n-2\sigma}},
\end{split}
\end{equation*}
which implies that
\begin{equation*}\label{DI2}
\begin{split}
&u_{x,\lda }(y)^{\frac{2\sigma}{n-2\sigma}}v_{x,\lda }(y)^{\frac{n}{n-2\sigma}}- u(y)^{\frac{2\sigma}{n-2\sigma}}v(y)^{\frac{n}{n-2\sigma}}\\
\leq& u_{x,\lda }(y)^{\frac{2\sigma}{n-2\sigma}}v_{x,\lda }(y)^{\frac{n}{n-2\sigma}}-u_{x,\lda }(y)^{\frac{2\sigma}{n-2\sigma}}v_{x,\lda }(y)^{\frac{n}{n-2\sigma}}\left( \frac{u(y)}{u_{x,\lda }(y)} \right)^{\frac{n+2\sigma}{n-2\sigma}}\left( \frac{v(y)}{v_{x,\lda }(y)} \right)^{\frac{n+2\sigma}{n-2\sigma}}\\
=&u_{x,\lda }(y)^{\frac{2\sigma}{n-2\sigma}}v_{x,\lda }(y)^{\frac{n}{n-2\sigma}}\left(1-\left( \frac{u(y)}{u_{x,\lda }(y)} \right)^{\frac{n+2\sigma}{n-2\sigma}}\left( \frac{v(y)}{v_{x,\lda }(y)} \right)^{\frac{n+2\sigma}{n-2\sigma}}\right)\\
\leq&\frac{n+2\sigma}{n-2\sigma}u_{x,\lda }(y)^{\frac{2\sigma}{n-2\sigma}}v_{x,\lda }(y)^{\frac{n}{n-2\sigma}}\left(\left(1-\frac{u(y)}{u_{x,\lda }(y)}\right) +\left(1-\frac{v(y)}{v_{x,\lda }(y)}\right)\right)\\
=&\frac{n+2\sigma}{n-2\sigma}u_{x,\lda }(y)^{\frac{4\sigma-n}{n-2\sigma}}v_{x,\lda }(y)^{\frac{n}{n-2\sigma}}\left(u_{x,\lda }(y)-u(y) \right)\\
&+\frac{n+2\sigma}{n-2\sigma}u_{x,\lda }(y)^{\frac{2\sigma}{n-2\sigma}}v_{x,\lda }(y)^{\frac{2\sigma}{n-2\sigma}}\left(v_{x,\lda }(y)-v (y)\right).
\end{split}
\end{equation*}
Consequently,
\[
I_2\leq C_3\int_{B_{\mu}(x)\backslash B_{\lda}(x)}\left((u_{x,\lda }-u)^++(v_{x,\lda }-v)^+\right)(u_{x,\lda }-u)^+.
\]
Here $C_3$ is a constant depending on $n$, $\beta$,  $\sigma$, $x$.
\end{proof}
\subsection{}
\begin{prop}\label{PP}\rm{\cite[Proposition 4.1]{LB}}
Let $x\in \R^{n}$, $\mu>0$, $x_0\in \R^{n}\backslash B_{\mu}(x)$ and  $X=(x,0)$. If $U\in C^2(\R^{n+1}_+\backslash\B^{+}_{\mu}(X))$ is a nonnegative solution of
\[
\begin{cases}
\begin{aligned}
&\mathrm{div}(t^{1-2\sigma} \nabla U)\le 0 & \quad& \mbox{{\rm in} }\ \R^{n+1}_+\backslash\B^{+}_{\mu}(X),\\
&\frac{\partial U}{\partial \nu^\sigma} \ge 0 &\quad& \mbox{{\rm on} }\ \partial'(\R^{n+1}_+\backslash\B^{+}_{\mu}(X))\backslash\{X_0\},
\end{aligned}
\end{cases}
\]
and $\liminf_{\xi\to \infty}U(\xi)\geq0$, then
\[
U(\xi)\geq \left(\frac{\mu}{|\xi-X|}\right)^{n-2\sigma}\inf_{\partial'' \B^{+}_{\mu}(X)} U\quad\quad \mbox{{\rm in} }\ \  \R^{n+1}_+\backslash\B^{+}_{\mu}(X).
\]
\end{prop}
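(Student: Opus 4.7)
The plan is to apply a weighted maximum principle for the extension operator to the difference $W := U - \phi$ on the exterior region $D := \R^{n+1}_+\setminus \overline{\B^+_\mu(X)}$, where
\[
\phi(\xi) := \left(\frac{\mu}{|\xi-X|}\right)^{n-2\sigma}\inf_{\partial''\B^+_\mu(X)} U.
\]
The first step is to record that $\phi$ is a distinguished barrier: since the Kelvin-type kernel $|\xi-X|^{2\sigma-n}$ is (up to a constant) the fundamental solution of $\mathrm{div}(t^{1-2\sigma}\nabla\cdot)$ centered at $X$ with vanishing conormal derivative on $\partial'\R^{n+1}_+\setminus\{X\}$, one has
\[
\mathrm{div}(t^{1-2\sigma}\nabla \phi)=0\ \text{ in }D,\qquad \frac{\partial \phi}{\partial \nu^\sigma}=0\ \text{ on }\partial' D,
\]
together with $\phi(\xi)\to 0$ as $|\xi|\to\infty$ and $\phi\le U$ on $\partial''\B^+_\mu(X)$ by construction. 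Consequently $W$ is a supersolution: $\mathrm{div}(t^{1-2\sigma}\nabla W)\le 0$ in $D$, $\partial_{\nu^\sigma} W\ge 0$ on $\partial'D\setminus\{X_0\}$, $W\ge 0$ on $\partial''\B^+_\mu(X)$, and $\liminf_{|\xi|\to\infty} W\ge 0$.

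The second step is a contradiction argument using the weak maximum/minimum principle for the operator $\mathrm{div}(t^{1-2\sigma}\nabla\cdot)$ with conormal boundary condition. Fix $\varepsilon>0$ and choose $R>\mu$ so large that $W\ge -\varepsilon$ on $\partial''\B^+_R$ (possible by the limit at infinity). Work on the bounded annular domain $D_R := \B^+_R\setminus \overline{\B^+_\mu(X)}$. If I can show $W\ge -\varepsilon$ in $D_R$, sending $R\to\infty$ and $\varepsilon\to 0$ gives $W\ge 0$, which is exactly the conclusion.

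The third step, and the main technical obstacle, is justifying that the single excluded point $X_0\in\partial'D$ does not destroy the maximum principle. To remove this point, I would introduce the perturbation
\[
W_\delta(\xi) := W(\xi) + \delta\,|\xi - X_0|^{2\sigma - n},\qquad \delta>0.
\]
Since $|\xi-X_0|^{2\sigma-n}$ is $\sigma$-harmonic in the extension sense away from $X_0$ with vanishing conormal derivative on $\partial'\R^{n+1}_+\setminus\{X_0\}$, the function $W_\delta$ still satisfies the same divergence and boundary inequalities on $D_R\setminus\{X_0\}$. Moreover $W_\delta(\xi)\to +\infty$ as $\xi\to X_0$, so the infimum of $W_\delta$ on $\overline{D_R}$ cannot be attained at or near $X_0$. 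Standard weak maximum principle for the degenerate elliptic operator with Neumann-type conormal condition (as used in \cite{JLX} and in the preliminaries of this paper) then forces $\inf_{\overline{D_R}} W_\delta \ge \min\{0,-\varepsilon\}=-\varepsilon$, and the infimum must be attained on $\partial''\B^+_\mu(X)\cup\partial''\B^+_R$ where $W_\delta\ge W\ge -\varepsilon$. Letting $\delta\to 0$ pointwise on $D_R\setminus\{X_0\}$ gives $W\ge -\varepsilon$ in $D_R$, and then $R\to\infty$, $\varepsilon\to 0$ yields $U\ge\phi$ in $D$.

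The delicate points are (i) verifying that the degenerate weighted maximum principle with mixed Dirichlet (on $\partial''$) and conormal (on $\partial'$) conditions applies when a single boundary point is removed, and (ii) controlling $W$ at infinity; both are handled by the barrier $\delta|\xi-X_0|^{2\sigma-n}$ and the truncation $\B^+_R$ respectively. Once these are in place the inequality $U\ge\phi$ on $D$ together with $U=\phi$ on $\partial''\B^+_\mu(X)$ (not needed for the inequality but consistent) completes the proof.
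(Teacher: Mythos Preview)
The paper does not actually supply a proof of this proposition: it is quoted from \cite{LB} and placed in the Appendix without argument. Your approach is correct and is precisely the standard barrier/maximum-principle proof one would expect; indeed the special case without the excluded point $X_0$ is carried out explicitly in the paper in the proof of Lemma~\ref{lemma1} (see the derivation of \eqref{9}), using the same comparison function $\phi(\xi)=\big(\mu/|\xi-X|\big)^{n-2\sigma}\inf_{\partial''\B^+_\mu(X)}U$ and the same ``standard maximum principle argument.'' The only additional ingredient needed for Proposition~\ref{PP} over that argument is the removal of the isolated boundary point $X_0$, and your perturbation $W_\delta=W+\delta|\xi-X_0|^{2\sigma-n}$ is exactly the right device: the added term is $\sigma$-harmonic for the extension operator away from $X_0$, has vanishing conormal derivative on $\partial'\R^{n+1}_+\setminus\{X_0\}$, and blows up at $X_0$, so the infimum of $W_\delta$ over the truncated annulus $D_R$ is attained on $\partial''\B^+_\mu(X)\cup\partial''\B^+_R$ where it is $\ge -\varepsilon$. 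Letting $\delta\to 0$, $R\to\infty$, $\varepsilon\to 0$ is routine. Nothing is missing.
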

\subsection{Some useful propositions}

\begin{prop}\label{CHANG1}\rm{\cite[Lemma 11.1]{LZ}}
Let $f\in C^{1}(\R^{n})$, $n\geq 1$, $\gamma> 0$. Suppose that for every $x\in \R^{n}$, there exists $\lambda(x)>0$ such that
\[
\left(\frac{\lambda(x) }{|y-x|}\right)^{\gamma}f\left(x+\frac{\lambda(x)^2(y-x)}{|y-x|^2}\right)=f(y)\quad\quad\mbox{in }\ \ \R^{n}\setminus \{x\}.
\]
Then for some $a\geq 0$, $d>0$, $\overline{x}\in \R^{n}$,
\[
f(x)=\pm\left(\frac{a}{d+|x-\overline{x}|^2}\right)^{\frac{\gamma}{2}}.
\]
\end{prop}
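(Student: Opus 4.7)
The plan is to convert the Kelvin invariance into an algebraic normalization of $\lambda(x)$, extract a first-order differential identity by exploiting the fact that $r\mapsto r^{\gamma/2}f(x+r\omega)$ has a critical point at $r=\lambda(x)$, and then differentiate once more to force the auxiliary function $F:=f^{-2/\gamma}$ to be a quadratic polynomial. First I would argue that $f$ has constant sign: if $f(y_0)=0$ then applying the identity at any $x\neq y_0$ gives $f(\phi_x(y_0))=0$ where $\phi_x(y_0):=x+\lambda(x)^2(y_0-x)/|y_0-x|^2$, and as $x$ varies over $\R^n\setminus\{y_0\}$ the image $\phi_x(y_0)$ sweeps out a dense set, so continuity forces $f\equiv0$. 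In the nontrivial case I assume $f>0$; the other sign produces the ``$-$'' branch. Sending $|y|\to\infty$ in the functional identity, $\phi_x(y)\to x$ and $|y-x|^\gamma/|y|^\gamma\to 1$, so $|y|^\gamma f(y)\to\lambda(x)^\gamma f(x)$; since the limit is independent of $x$, there exists $C>0$ with $\lambda(x)^\gamma f(x)=C$ for all $x$, equivalently $\lambda(x)^2=C^{2/\gamma}F(x)$.

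The next step is to derive a pointwise differential constraint. For fixed $x$ and $\omega\in\mathbb{S}^{n-1}$, the invariance reads $r^{\gamma/2}f(x+r\omega)=(\lambda(x)^2/r)^{\gamma/2}f(x+(\lambda(x)^2/r)\omega)$, so this function of $r$ is symmetric under the involution $r\mapsto\lambda(x)^2/r$ and in particular has a critical point at the fixed point $r=\lambda(x)$. Writing out $d/dr=0$ at $r=\lambda(x)$ and translating to $F$ yields
\[
(y-x)\cdot\nabla F(y)=F(y)\quad\text{whenever}\quad |y-x|^2=\lambda(x)^2=C^{2/\gamma}F(x).
\]
Now differentiating this identity along a curve $\omega(t)$ on $\mathbb{S}^{n-1}$ with $x$ held fixed, and using the same identity to cancel the first-order terms, I obtain $\omega^{\top}\mathrm{Hess}(F)(y)\,\xi=0$ for every $\xi\perp\omega$, which says that $\omega$ is an eigenvector of $\mathrm{Hess}(F)(y)$.

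Finally, varying $x$ with $y$ held fixed, the admissible directions $\omega=(y-x)/\lambda(x)$ sweep out a sufficiently rich subset of $\mathbb{S}^{n-1}$ (the constraint $|y-x|^2=C^{2/\gamma}F(x)$ cuts out a hypersurface in $x$-space, producing an $(n-1)$-parameter family of $\omega$'s) to force every vector to be an eigenvector of $\mathrm{Hess}(F)(y)$; hence $\mathrm{Hess}(F)(y)=\mu(y)\,I$ at every $y$. Symmetry of third-order partial derivatives then forces $\mu$ to be constant, so $F(x)=\tfrac{\mu}{2}|x|^2+L\cdot x+D$ for constants $L\in\R^n$ and $D\in\R$; combined with $F>0$ everywhere and the asymptotic $F(y)/|y|^2\to 1/C^{2/\gamma}$, completing the square gives $F(x)=(d+|x-\bar{x}|^2)/a$ for some $a,d>0$ and $\bar{x}\in\R^n$, that is, $f(x)=(a/(d+|x-\bar{x}|^2))^{\gamma/2}$. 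The main obstacle is twofold: one must bootstrap the regularity from $f\in C^1$ to $F\in C^2$ (needed to form the Hessian) using the functional equation, and one must verify rigorously the spanning step, namely that the admissible directions $\omega=(y-x)/\lambda(x)$ at each fixed $y$ form a set rich enough to force $\mathrm{Hess}(F)(y)$ to be a scalar multiple of the identity.
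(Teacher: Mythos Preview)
The paper does not supply a proof of this proposition at all: it is listed in the appendix as a citation of \cite[Lemma 11.1]{LZ} and is used as a black box. So there is no ``paper's own proof'' to compare against; your attempt is really an independent reconstruction of the Li--Zhang argument.

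That said, your strategy is the standard one and the first half is correct: the constancy of $\lambda(x)^{\gamma}f(x)$ follows from the limit $|y|\to\infty$, and the critical-point computation giving $(y-x)\cdot\nabla F(y)=F(y)$ on the sphere $|y-x|=\lambda(x)$ is right. The two obstacles you flag at the end are exactly the genuine gaps. First, you only have $f\in C^{1}$, hence $F\in C^{1}$, so forming $\mathrm{Hess}(F)$ is not immediately justified; the Li--Zhang argument handles this by working entirely at the level of first derivatives and showing directly that $\nabla F$ is an affine map (equivalently, by differentiating the identity with respect to $x$ rather than along the sphere in $y$, one stays at first order in $F$). Second, your ``spanning'' step---that for each fixed $y$ the admissible directions $\omega=(y-x)/\lambda(x)$ fill out an open set of $\mathbb{S}^{n-1}$---requires an implicit-function argument on the constraint surface $|y-x|^{2}=C^{2/\gamma}F(x)$ and, as written, does not cover the case $n=1$ (where $\mathbb{S}^{0}$ is discrete and the tangential differentiation is vacuous). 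A cleaner route is to observe that subtracting the identity at two admissible centers $x_{1},x_{2}$ gives $(x_{2}-x_{1})\cdot\nabla F(y)=0$, which already pins $\nabla F(y)$ to a single direction and, combined with the scalar relation, forces $\nabla F$ to be affine without invoking the Hessian. With that modification the argument closes in all dimensions and matches the proof in \cite{LZ}.
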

\begin{prop}\label{CHANG2}\rm{\cite[Lemma 11.2]{LZ}}
Let $f\in C^{1}(\R^{n})$, $n\geq 2$, $\gamma> 0$. Assume that $x\in \R^{n}$, if for all $\lambda>0$,
\[
\left(\frac{\lambda }{|y-x|}\right)^{\gamma}f\left(x+\frac{\lambda^2(y-x)}{|y-x|^2}\right)\leq f(y)\quad\quad \mbox{in }\ \ \R^{n}\backslash B_{\lambda}(x).
\]
Then
\[
f(x)=constant.
\]
\end{prop}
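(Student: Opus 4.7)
The plan is to reinterpret the sphere-Kelvin inequality as a one-dimensional monotonicity statement along every ray issued from every base point $x$, and then extract rigidity by pushing the base point to infinity along the line joining two prescribed points.

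Reformulation step. Writing $y = x + r\theta$ with $r = |y-x|$ and $\theta \in S^{n-1}$, the Kelvin image equals $x + (\lambda^2/r)\theta$, and the hypothesis reads
\[
(\lambda/r)^\gamma\, f\bigl(x + (\lambda^2/r)\theta\bigr) \leq f(x + r\theta), \qquad r>\lambda>0.
\]
Setting $s = \lambda^2/r$ gives $\lambda = \sqrt{sr}$ and $(\lambda/r)^\gamma = (s/r)^{\gamma/2}$, and the range $r > \lambda > 0$ is equivalent to $0 < s < r$ (since $\lambda = \sqrt{sr}$ lies strictly between $s$ and $r$ whenever $0 < s < r$). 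Hence the hypothesis is exactly: for every $x \in \R^n$ and every $\theta \in S^{n-1}$, the function $r \mapsto r^{\gamma/2}\, f(x + r\theta)$ is nondecreasing on $(0,\infty)$.

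Rigidity step. To deduce $f \equiv \mathrm{const}$, I fix arbitrary distinct points $y,z \in \R^n$ and place the base point far away on the line through them: take $x = y - R(z-y)$ with $R > 0$ large, so that $y$ and $z$ lie on a common ray from $x$ in direction $\theta = (z-y)/|z-y|$, with $r_y = R|z-y|$ and $r_z = (R+1)|z-y|$. The monotonicity from the previous step applied to this $x$ yields
\[
R^{\gamma/2}\, f(y) \;\leq\; (R+1)^{\gamma/2}\, f(z),
\]
and dividing by $R^{\gamma/2}$ and sending $R \to \infty$ (so that $((R+1)/R)^{\gamma/2} \to 1$) gives $f(y) \leq f(z)$. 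The symmetric choice $x = z + R(z-y)$ swaps the roles of $y$ and $z$ and produces the reverse inequality, so $f(y) = f(z)$; since $y, z \in \R^n$ were arbitrary, $f$ is constant.

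The main subtlety is that no positivity of $f$ is assumed, so one must verify that an inequality of the form $R^{\gamma/2} A \leq (R+1)^{\gamma/2} B$ valid for all large $R$ really implies $A \leq B$ even when $A, B$ may have arbitrary signs. This is immediate from $((R+1)/R)^{\gamma/2} \to 1$ together with the finiteness of $f(y), f(z)$. The $C^1$ regularity of $f$ enters only to evaluate $f$ pointwise along the chosen rays, and the dimension hypothesis $n \geq 2$ plays no essential role beyond making the spherical Kelvin transform meaningful on a codimension-zero region; the entire argument is essentially a rescaling lemma.
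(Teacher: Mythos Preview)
The paper does not prove this proposition; it merely quotes it from Li--Zhang \cite[Lemma 11.2]{LZ} as a known calculus lemma. Your argument is correct and is essentially the standard one from that reference: the moving-sphere inequality for all centers $x$ and all radii $\lambda$ is equivalent to the monotonicity of $r\mapsto r^{\gamma/2}f(x+r\theta)$ along every ray, and pushing the center to infinity along the line through two prescribed points forces equality. Your explicit repackaging via the substitution $s=\lambda^2/r$ is a clean way to state the monotonicity, and your check that the limiting step works regardless of the sign of $f$ is appropriate since the lemma does not assume positivity.
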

\begin{prop}\label{CHANG2}\rm{\cite[Lemma 11.3]{LZ}}
Let $f\in C^{1}(\R_+^{n+1})$, $n\geq 1$, $\gamma> 0$. Assume that $X\in \partial\R_+^{n+1}$, if for all $\lambda>0$,
\be
\left(\frac{\lambda }{|\xi-X|}\right)^{\gamma}f\left(X+\frac{\lambda^2(\xi-X)}{|\xi-X|^2}\right)\leq f(\xi)\quad \mbox{in}\ \ \R_+^{n+1}\backslash \B^+_{\lambda}(x).
\ee
Then
\[
f(Y)=f(y',t)=f(0,t)\quad\mbox{in}\ \ \R_+^{n+1}.
\]
\end{prop}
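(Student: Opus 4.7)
The plan is to rewrite the hypothesis as a monotonicity property for a weighted version of $f$ along rays emanating from arbitrary boundary centers, and then exploit the freedom in the choice of center to align any two points of $\R_+^{n+1}$ of different heights onto a common ray.

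First, I would note the algebraic identity $|\eta-X|\cdot|\xi-X|=\lambda^{2}$ for the inversion partners $\eta:=X+\lambda^{2}(\xi-X)/|\xi-X|^{2}$ and $\xi$, which in particular gives $\lambda/|\xi-X|=\sqrt{|\eta-X|/|\xi-X|}$. Substituting this into the hypothesis puts it in the symmetric form
\[
|\eta-X|^{\gamma/2}f(\eta)\leq|\xi-X|^{\gamma/2}f(\xi),
\]
valid whenever $\eta,\xi\in\R_+^{n+1}$ lie on a common ray from $X\in\pa\R_+^{n+1}$ with $|\eta-X|\leq|\xi-X|$. Equivalently, for every boundary point $X$ the function $g_{X}(\xi):=|\xi-X|^{\gamma/2}f(\xi)$ is nondecreasing along every ray emanating from $X$ into $\R_+^{n+1}$.

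Second, given any two heights $0<t<t'$ and any $y,y'\in\R^{n}$, I would choose the unique boundary point $X=(x,0)$ that places $Z:=(y,t)$ and $Z':=(y',t')$ on a common ray with $|Z-X|<|Z'-X|$. Requiring $Z'-X=r'(Z-X)$ forces $r'=t'/t$ and $x=(t'y-ty')/(t'-t)$. A direct calculation then gives
\[
|Z-X|=\frac{t\,\rho}{t'-t},\qquad |Z'-X|=\frac{t'\,\rho}{t'-t},\quad\rho:=\sqrt{|y'-y|^{2}+(t'-t)^{2}}.
\]
Plugging into the monotonicity of $g_{X}$ and cancelling the common factor $(\rho/(t'-t))^{\gamma/2}$ yields
\[
t^{\gamma/2}f(y,t)\leq (t')^{\gamma/2}f(y',t').
\]

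Third, to conclude, fix $y\in\R^{n}$ and $t>0$. Applying the previous inequality to the pair $\bigl((y,t),(0,t')\bigr)$ and letting $t'\downarrow t$, continuity of $f$ gives $f(y,t)\leq f(0,t)$; the reverse inequality follows by applying it to $\bigl((0,t),(y,t')\bigr)$ and again letting $t'\downarrow t$. Hence $f(y,t)=f(0,t)$ for every $y\in\R^{n}$ and $t>0$, and the boundary value $t=0$ is obtained by continuity of $f$ up to $\pa\R_+^{n+1}$.

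The only delicate step is the geometric Step 2: even though the inversion center is constrained to the boundary, any two points at distinct heights can still be aligned on a common ray from a suitable boundary point, and the height ratio of the two points coincides with the ratio of their distances from that point. This is the half-space counterpart of the device used in \cite[Lemma 11.2]{LZ}, where in the interior setting one sends the inversion center to infinity along the line through two prescribed points; the remaining steps are elementary limits that use only the assumed $C^{1}$ regularity of $f$.
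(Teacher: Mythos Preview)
Your argument is correct. The paper does not supply its own proof of this proposition; it is quoted verbatim from \cite[Lemma 11.3]{LZ} and used as a black box (in the proof of Lemma~\ref{lemma4} and in the blow-up argument of Theorem~\ref{thm2}). What you have written is essentially the proof given in \cite{LZ}: recast the Kelvin-type inequality as radial monotonicity of $|\xi-X|^{\gamma/2}f(\xi)$ along rays from any boundary center $X$, then exploit the freedom in the choice of $X$ to compare two arbitrary points at distinct heights, and finally pass to the limit $t'\downarrow t$. One minor remark: the conclusion is stated only for $\R_+^{n+1}$, i.e.\ for $t>0$, so your last sentence about extending to $t=0$ by continuity is not needed (and indeed $f$ is only assumed $C^{1}$ on the open half-space).
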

\begin{prop}\label{prop:guji1}\rm {\cite[Proposition 2.9]{Silvestre}}
Let
\be
(-\Delta)^{\sigma} u=g(x)\quad \mbox{in }\ B_R.
\ee
Assume $u$ and $g\in L^{\infty}(B_R)$, then there exists $\gamma \in (0,1)$, such that $u\in C^{\gamma}(B_{R/2})$. Moreover,
\be
\|u\|_{C^{\gamma}(B_{R/2})}\leq C\left(\|u\|_{L^{\infty}(B_R)}+\|g\|_{L^{\infty}(B_R)}\right),
\ee
for a positive constant $C$ depending only on $n$, $\sigma$, $R$.
\end{prop}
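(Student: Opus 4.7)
The plan is to invoke the Caffarelli--Silvestre extension to reduce this nonlocal estimate to a boundary Hölder estimate for a degenerate local equation, and then apply the De Giorgi--Nash--Moser machinery adapted to the Muckenhoupt $A_2$ weight $t^{1-2\sigma}$. First I would split $u=u_1+u_2$ with $u_1=u\chi_{B_R}\in L^\infty(\R^n)$ and $u_2$ supported outside $B_R$; since $u_2$ vanishes on $B_R$, the function $(-\Delta)^\sigma u_2$ is smooth on $B_{R/2}$ with sup-norm bounded by a multiple of $\|u\|_{L^\infty(B_R)}$ (using the implicit decay of $u$ at infinity, e.g.\ the $L_\sigma(\R^n)$ hypothesis needed so that $(-\Delta)^\sigma u$ is pointwise defined). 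Absorbing this contribution into $g$ and extending $u_1$ via \eqref{DD}, I obtain a bounded $U\in W^{1,2}(t^{1-2\sigma},\R^{n+1}_+)$ solving
\[
\begin{cases}
\mathrm{div}(t^{1-2\sigma}\nabla U)=0 & \text{in } \R^{n+1}_+, \\
\tfrac{\partial U}{\partial \nu^\sigma}=g & \text{on } B_R,
\end{cases}
\]
together with the bound $\|U\|_{L^\infty(\B^+_R)}\le C\|u\|_{L^\infty(B_R)}$ from standard estimates on the Poisson kernel $\mathcal P_\sigma$.

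The heart of the argument is a boundary oscillation-decay estimate for this mixed problem. Since $t^{1-2\sigma}\in A_2$ on $\R^{n+1}$, one has the weighted Sobolev and Poincaré inequalities of Fabes--Kenig--Serapioni, which yield a weighted De Giorgi--Nash--Moser theory. Adapting it to the conormal boundary condition, I would show that there exist $\theta\in(0,1)$ and $\gamma>0$ such that, for every boundary point $X_0\in B_{R/2}\times\{0\}$ and every $\rho\le R/4$,
\[
\mathop{\mathrm{osc}}_{\B^+_{\rho/2}(X_0)} U \;\le\; \theta\mathop{\mathrm{osc}}_{\B^+_{\rho}(X_0)} U \;+\; C\rho^{2\sigma}\|g\|_{L^\infty(B_R)}.
\]
The decay factor $\theta$ comes from a weak Harnack inequality for nonnegative supersolutions of the homogeneous problem, proved by Moser iteration with truncation and the weighted Poincaré inequality; the inhomogeneous term is absorbed by subtracting a barrier built from the solution of the same boundary problem with constant datum $\|g\|_{L^\infty}$. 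Standard dyadic iteration together with the Campanato-type characterization of Hölder continuity then gives $U\in C^\gamma(\overline{\B^+_{R/2}})$ with a quantitative bound, and restriction to $\{t=0\}$ yields the claim for $u$.

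The $R$-dependence of the constant is handled by scaling: setting $\tilde u(x):=u(Rx)/M$ with $M:=\|u\|_{L^\infty(B_R)}+R^{2\sigma}\|g\|_{L^\infty(B_R)}$ produces $(-\Delta)^\sigma\tilde u=\tilde g$ in $B_1$ with $\|\tilde u\|_{L^\infty(B_1)},\|\tilde g\|_{L^\infty(B_1)}\le 1$, so the universal estimate on the unit ball transfers back to $B_R$ with an explicit power of $R$.

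The hard part is establishing the boundary weak Harnack inequality and the concomitant oscillation decay for the weighted equation with a bounded conormal source: constructing the right test functions, iterating with the $A_2$ weight up to the flat boundary, and controlling the boundary integrals in Moser's scheme is technical although by now classical. Once that decay is secured, the remainder of the argument is routine. This entire program is carried out in detail in \cite{Silvestre}, from which the proposition is quoted directly.
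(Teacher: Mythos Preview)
The paper does not supply a proof of this proposition at all: it is simply quoted from \cite{Silvestre} and stated without argument in the appendix. You correctly note this at the end of your proposal, so in that sense your submission matches the paper exactly.

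Your added sketch via the Caffarelli--Silvestre extension followed by weighted De Giorgi--Nash--Moser theory is a legitimate and standard route to the result, and indeed is close in spirit to how Silvestre argues (he also passes through the extension, though his oscillation decay is obtained via a comparison/growth-lemma argument rather than a full Moser iteration). One small correction: in your splitting $u=u_1+u_2$, the bound you claim for $(-\Delta)^\sigma u_2$ on $B_{R/2}$ cannot be controlled by $\|u\|_{L^\infty(B_R)}$ alone, since $u_2$ is supported in $\R^n\setminus B_R$ and carries no information from $B_R$. The tail contribution is bounded instead by $\|u\|_{L_\sigma(\R^n)}$ (or $\|u\|_{L^\infty(\R^n)}$ if one assumes global boundedness). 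The proposition as stated in the paper is therefore slightly informal: for the estimate to hold with the right-hand side depending only on $\|u\|_{L^\infty(B_R)}$ and $\|g\|_{L^\infty(B_R)}$, one must either read $B_R$ as $\R^n$ in the $u$-term or absorb the global $L_\sigma$-norm into the constant $C$. This is a defect of the statement rather than of your argument, but your sketch should reflect the correct dependence.
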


\begin{prop}\label{prop:guji}\rm{\cite[Theorem 2.1]{JLX}}
 Let
\be
(-\Delta)^{\sigma} u=g(x)\quad \mbox{in }\ B_R.
\ee
Assume $u\in L^{\infty}(B_R)$ and $g\in C^{\gamma}(B_R)$, $\gamma>0$  and  $2\sigma+\gamma$ is not an integer. Then $u\in C^{2\sigma+\gamma}(B_{R/2})$. Moreover,
\be
\|u\|_{C^{2\sigma+\gamma}(B_{R/2})}\leq C\left(\|u\|_{L^{\infty}(B_R)}+\|g\|_{C^{\gamma}(B_{3R/4})}\right),
\ee
for a positive constant $C$ depending only on $n$, $\sigma$, $\gamma$, $R$.
\end{prop}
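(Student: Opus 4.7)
The plan is to use the Caffarelli-Silvestre extension to convert the nonlocal equation $(-\Delta)^{\sigma} u = g$ into a local degenerate elliptic boundary value problem on $\R^{n+1}_+$, and then establish the Schauder estimate by a freeze-the-source argument combined with a Campanato-type iteration. Since the statement is purely local, we may assume $u\in L^\infty(B_R)$ and $g\in C^\gamma(B_R)$.

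First I would extend $u$ to a function $U$ on $\R^{n+1}_+$ via the Poisson kernel $\mathcal{P}_\sigma$, so that $U\in W^{1,2}_{\rm loc}(t^{1-2\sigma},\R^{n+1}_+)$ solves
\[
\mathrm{div}(t^{1-2\sigma}\nabla U)=0 \text{ in } B_R\times(0,\infty), \qquad \frac{\pa U}{\pa \nu^\sigma}=c_{n,\sigma}\,g \text{ on } B_R.
\]
The Hölder regularity of $u=U(\cdot,0)$ will then follow from boundary regularity of $U$ adapted to the weight $t^{1-2\sigma}$ in the spirit of Fabes-Kenig-Serapioni. As a baseline, Proposition \ref{prop:guji1} already gives $u\in C^{\gamma_0}(B_{R/2})$ for some $\gamma_0>0$, so the extension $U$ is Hölder continuous up to the boundary away from $\pa B_R$.

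The core step is a freeze-the-source argument. For each $x_0\in B_{R/2}$ and small $r>0$, on the half-cylinder $\mathcal{C}_r:=B_r(x_0)\times(0,r)$ I decompose $U=U_0+V$, where $U_0$ solves the same degenerate equation with conormal datum replaced by the \emph{constant} $c_{n,\sigma}\,g(x_0)$ and with Dirichlet values equal to $U$ on the lateral boundary, while $V$ absorbs the oscillation of $g$. The constant-flux model solution admits an explicit Taylor-type expansion of order $\lfloor 2\sigma+\gamma\rfloor$ at $X_0=(x_0,0)$ involving polynomials plus the distinguished term $|X-X_0|^{2\sigma}$; this expansion has sharp $C^{2\sigma+\gamma}$ control precisely when $2\sigma+\gamma\notin\mathbb{Z}$. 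For $V$, a weighted energy estimate together with the trace inequality (Proposition \ref{tra}) and $|g(x)-g(x_0)|\le\|g\|_{C^\gamma}|x-x_0|^\gamma$ yields
\[
\|V\|_{L^\infty(\mathcal{C}_{r/2})}\le C\,r^{2\sigma+\gamma}\bigl(\|u\|_{L^\infty(B_R)}+\|g\|_{C^\gamma(B_{3R/4})}\bigr).
\]
Iterating across dyadic scales $r=2^{-k}$ and extracting at each scale the best approximating polynomial $P_{x_0,k}$ of degree $\lfloor 2\sigma+\gamma\rfloor$, a standard Campanato argument produces a single polynomial $P_{x_0}$ with
\[
|u(x)-P_{x_0}(x)|\le C\,|x-x_0|^{2\sigma+\gamma}\bigl(\|u\|_{L^\infty(B_R)}+\|g\|_{C^\gamma(B_{3R/4})}\bigr),
\]
uniformly in $x_0\in B_{R/2}$. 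Campanato's characterization of Hölder spaces then delivers $u\in C^{2\sigma+\gamma}(B_{R/2})$ together with the stated estimate.

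The hard part will be establishing the sharp $C^{2\sigma+\gamma}$ expansion of the model extension problem and tracking how the distinguished boundary term $|x-x_0|^{2\sigma}$ enters the Taylor polynomial; this is precisely where the hypothesis $2\sigma+\gamma\notin\mathbb{Z}$ enters, to rule out logarithmic corrections that would otherwise appear when a polynomial exponent coincides with $2\sigma$ plus an integer. A secondary technical point is the uniform weighted energy estimate for $V$, where one must handle the degeneracy at $\{t=0\}$; the trace inequality and the weighted Poincaré inequality on $\mathcal{C}_r$ (with constants that scale correctly in $r$) are the right tools. These two points are the technical heart of the Jin-Li-Xiong argument we are citing.
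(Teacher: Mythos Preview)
The paper does not prove this proposition at all: it is stated in the Appendix as a direct quotation of \cite[Theorem 2.1]{JLX} and is used as a black box. So there is no ``paper's own proof'' to compare against; your proposal is an attempt to reprove a result that the authors simply cite.

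As for the substance of your sketch, it is a plausible outline of a Schauder theory for $(-\Delta)^\sigma$ via the Caffarelli--Silvestre extension and a Campanato iteration, and the role you assign to the hypothesis $2\sigma+\gamma\notin\mathbb{Z}$ is correct in spirit. However, you should be aware that the actual proof in \cite{JLX} does not proceed by the extension route you describe. Jin--Li--Xiong work directly with the singular integral representation of $(-\Delta)^\sigma$ and a potential-theoretic decomposition (writing $u$ as a Riesz potential of $g$ plus an $\sigma$-harmonic remainder), reducing matters to classical H\"older mapping properties of Riesz potentials. Your extension-plus-freezing approach would in principle also work, but the ``hard part'' you flag --- the sharp $C^{2\sigma+\gamma}$ expansion of the model boundary problem for the degenerate operator $\mathrm{div}(t^{1-2\sigma}\nabla\,\cdot\,)$ --- is itself a nontrivial regularity theorem (essentially the content of Cabr\'e--Sire or the later boundary Harnack literature), so your argument is not self-contained and effectively trades one citation for another of comparable depth.
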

\begin{prop}\label{Tidu}\rm{\cite[Lemma 4.5]{XY}}
Let $g\in C^{\alpha}(B_R)$ for some $\alpha\in(0,1)$ and $U\in W(t^{1-2\sigma}, \B^{+}_{R})\cap L^{\infty}(\B^+_R)$ be a weak solution  of
\[
\begin{cases}
\begin{aligned}
&\mathrm{div}(t^{1-2\sigma} \nabla U)=0 & \quad &\mbox{in }\B^{+}_R,\\
&\frac{\pa U}{\pa \nu^\sigma} = g(x)&\quad &\mbox{on }\pa'\B^{+}_{R},
\end{aligned}
\end{cases}
\]
then there exists $\gamma \in (0,1)$ depending only on $n$, $\sigma$ and $\alpha$, such that $U\in C^{\gamma}(\overline{\B^{+}_{R/2}})$ and $t^{1-2\sigma}\partial_t U\in C^{\gamma}(\overline{\B^{+}_{R/2}})$.
Furthermore, there exist constants $C_1$ and $C_2$ depending only on $n$, $\sigma$, $\alpha$, $R$, $\|U\|_{L^\infty(\B^{+}_R)}$ and also on $\|g\|_{L^\infty(B_R)}$  for $C_1$, $\|g\|_{C^{\alpha}(B_R)}$
for $C_2$ such that
\[
\|U\|_{C^{\gamma}(\overline{\B^{+}_{R/2}})}\leq C_1,
\]
and
\[
\|t^{1-2\sigma}\partial_t U\|_{C^\gamma (\overline{\B^{+}_{R/2}})}\leq C_2.
\]
\end{prop}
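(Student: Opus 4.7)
The plan is to establish the H\"older regularity up to the flat boundary $\pa'\B_R^+$ of both $U$ and the weighted conormal derivative $t^{1-2\sigma}\pa_t U$, relying on the Muckenhoupt $A_2$ theory of degenerate elliptic equations. Since $\sigma\in(0,1)$, the weights $t^{1-2\sigma}$ and $t^{2\sigma-1}$ both belong to the $A_2$ class after even reflection across $\{t=0\}$, so the Fabes--Kenig--Serapioni framework (weighted Harnack inequality and De Giorgi--Nash--Moser oscillation decay) is available throughout.

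For the first assertion on $U$, I would first obtain interior $C^\gamma$ regularity on compactly contained subdomains of $\B_R^+$ via the weighted De Giorgi--Nash--Moser iteration, with oscillation controlled by $\|U\|_{L^\infty(\B_R^+)}$ alone. To reach $\pa'\B_R^+$ I would extend $U$ by even reflection, $\widetilde U(x,t):=U(x,|t|)$; the Neumann condition $\pa U/\pa\nu^\sigma=g$ then produces a surface source of the reflected equation supported on $\{t=0\}$. Subtracting a particular solution $P$ that absorbs this source, constructed by convolving $g$ with the weighted fundamental solution (a step that requires only $g\in L^\infty$), yields a remainder $\widetilde U-P$ satisfying a homogeneous $A_2$-weighted equation on $\B_{R/2}$ and inheriting $C^\gamma$ regularity from the interior theory. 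This produces the constant $C_1$ with the stated dependence on $\|U\|_{L^\infty(\B_R^+)}$ and $\|g\|_{L^\infty(B_R)}$.

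For the second assertion on $t^{1-2\sigma}\pa_t U$, I would exploit a conjugate-equation trick. Set $W(x,t):=-t^{1-2\sigma}\pa_t U(x,t)$; a direct computation starting from $\mathrm{div}(t^{1-2\sigma}\nabla U)=0$ gives
\[
\begin{cases}
\mathrm{div}(t^{2\sigma-1}\nabla W)=0 & \mbox{in }\B_R^+,\\
W(x,0)=g(x) & \mbox{on }\pa'\B_R^+,
\end{cases}
\]
so that $W$ satisfies a weighted elliptic equation with $A_2$ weight $t^{2\sigma-1}$ together with the \emph{Dirichlet} datum $g$. The required $C^\gamma$ regularity of $W$ up to the boundary then follows from Schauder-type estimates for weighted equations with H\"older Dirichlet data: freezing $g$ to its value $g(x_0)$ at a boundary point $x_0\in\pa'\B_{R/2}^+$, comparing $W$ to the unique bounded solution of the homogeneous problem on $\B_\rho^+(x_0)$ with that constant boundary value, and iterating the resulting Campanato-type oscillation decay on dyadic half-balls. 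This yields the constant $C_2$ with the dependence on $\|g\|_{C^\alpha(B_R)}$.

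The main obstacle is the boundary behavior at $\pa'\B_R^+$, where ellipticity degenerates and classical Schauder theory does not apply directly. The technical heart lies in (i) arranging the particular-solution subtraction so that the first estimate only sees $\|g\|_{L^\infty}$ rather than the stronger $C^\alpha$ norm, and (ii) executing the Campanato iteration for $W$ in the weighted setting, where the rescaling of $t^{2\sigma-1}$ under dilations $X\mapsto \rho X$ must be tracked carefully to produce a \emph{uniform} H\"older exponent $\gamma\in(0,1)$ independent of the scale. Both issues are standard in the extension-problem literature but do not follow from the interior $A_2$-theory alone.
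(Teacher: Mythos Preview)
The paper does not supply its own proof of this proposition: it is quoted verbatim from Cabr\'e--Sire \cite[Lemma~4.5]{XY} and placed in the appendix as a cited auxiliary result. Your sketch is correct and is in fact the approach of that reference---the even-reflection/$A_2$ argument for the first estimate and, crucially, the conjugate-equation observation $W:=-t^{1-2\sigma}\pa_t U$ solving $\mathrm{div}(t^{2\sigma-1}\nabla W)=0$ with Dirichlet datum $W(\cdot,0)=g$ for the second estimate are exactly the ingredients used there.
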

\begin{prop}\label{Tidu1}\rm{\cite[Proposition 2.13]{JLX}}
Let $g\in C^{1}(B_R)$ and $U\in W(t^{1-2\sigma}, \B^{+}_{R})$ be a weak solution  of
\[
\begin{cases}
\begin{aligned}
&\mathrm{div}(t^{1-2\sigma} \nabla U)=0 & \quad &\mbox{in }\B^{+}_R,\\
&\frac{\pa U}{\pa \nu^\sigma}= g(x)&\quad &\mbox{on }\pa'\B^{+}_{R},
\end{aligned}
\end{cases}
\]
then there exists a positive constant $C$ depending only on $n$, $\sigma$,  $R$
such that
\[
\|\nabla U\|_{L^{\infty}(\B^{+}_{R/2})}\leq C(\|U\|_{L^2(t^{1-2\sigma},\B^{+}_R)}+\|g\|_{C^1(B_R)}).
\]
\end{prop}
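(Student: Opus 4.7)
The plan is to bootstrap from a weighted $L^2$ energy inequality to a pointwise gradient bound by differentiating tangentially and then invoking the weighted Moser iteration that underlies Proposition~\ref{Harnack inequality} and \cite[Proposition~2.6]{JLX}. As a preliminary step I would establish a Caccioppoli-type inequality on $\B^+_R$: for a smooth cutoff $\eta \in C_c^\infty(\B_R \cup \pa'\B_R)$ with $\eta \equiv 1$ on $\B^+_{3R/4}$ and $|\nabla \eta|\le C/R$, testing the weak form of the equation with $\eta^2 U$ and integrating by parts produces a boundary term involving $g$; using Proposition~\ref{tra} (the trace inequality) together with Young's inequality to absorb this term yields
\[
\int_{\B^+_{3R/4}} t^{1-2\sigma}|\nabla U|^2\,dX \leq C\bigl(\|U\|^2_{L^2(t^{1-2\sigma},\B^+_R)} + \|g\|^2_{L^\infty(B_R)}\bigr).
\]

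Next, because the weight $t^{1-2\sigma}$ is independent of $x$, each tangential derivative $W_i := \pa_{x_i} U$ is itself a weak solution in a slightly smaller half-ball of $\mathrm{div}(t^{1-2\sigma}\nabla W_i) = 0$ with boundary data $\pa W_i / \pa\nu^\sigma = \pa_{x_i} g$ on $\pa'\B^+_R$. Reapplying the Caccioppoli estimate to $W_i$ controls $\nabla W_i$ in weighted $L^2$, and then the weighted $L^\infty$-bound for $A_2$-weighted subsolutions (the analogue of Proposition~\ref{Harnack inequality}(i)) applied to $|W_i|$ gives
\[
\|W_i\|_{L^\infty(\B^+_{R/2})} \leq C\bigl(\|U\|_{L^2(t^{1-2\sigma},\B^+_R)} + \|g\|_{C^1(B_R)}\bigr),
\]
which controls all the tangential components of $\nabla U$.

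For the normal derivative $\pa_t U$ I would read the equation directly. Rewriting $\mathrm{div}(t^{1-2\sigma}\nabla U) = 0$ as $\pa_t(t^{1-2\sigma}\pa_t U) = -t^{1-2\sigma}\Delta_x U$ and integrating from $0$, using that $\lim_{s \to 0^+} s^{1-2\sigma}\pa_s U(x,s) = -g(x)$, yields the representation
\[
t^{1-2\sigma}\pa_t U(x,t) = -g(x) - \int_0^t s^{1-2\sigma}\Delta_x U(x,s)\,ds.
\]
Iterating the tangential bootstrap once more bounds $\|\Delta_x U\|_{L^\infty}$ in terms of $\|g\|_{C^1}$ and $\|U\|_{L^2(t^{1-2\sigma})}$. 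The main obstacle is then to divide by $t^{1-2\sigma}$ and conclude an $L^\infty$ bound on $\pa_t U$ uniformly up to $\{t=0\}$: the weight degenerates (for $\sigma<1/2$) or blows up (for $\sigma>1/2$) there, so one must exploit the precise cancellation between the singular prefactor $t^{2\sigma-1}$ and the right-hand side of the representation. It is exactly here that the hypothesis $g \in C^1$, rather than merely $g \in C^0$, becomes essential — it supplies the first-order Taylor expansion of $g$ needed to absorb the singular factor and keep $\pa_t U$ bounded at the boundary.
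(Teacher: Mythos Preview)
The paper does not supply its own proof of this proposition; it is quoted from \cite[Proposition~2.13]{JLX} and listed in the appendix as a black-box tool, so there is no argument in the paper to compare yours against.

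Your outline nonetheless has two concrete gaps, both in the treatment of the vertical derivative. First, the step ``iterating the tangential bootstrap once more bounds $\|\Delta_x U\|_{L^\infty}$ in terms of $\|g\|_{C^1}$'' asks too much of the hypothesis: a second tangential differentiation of the boundary condition places $\partial_{x_ix_j} g$ on $\partial'\B^+_R$, and $g\in C^1$ does not furnish that. One differentiation gives $\nabla_x U\in L^\infty$, but not $D^2_x U\in L^\infty$ without $g\in C^2$.

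Second, and more seriously, the closing sentence misidentifies where the singularity lives. From your own representation,
\[
\partial_t U(x,t)\;=\;-\,t^{2\sigma-1}g(x)\;-\;t^{2\sigma-1}\!\int_0^t s^{1-2\sigma}\Delta_x U(x,s)\,ds,
\]
the obstruction for $\sigma<\tfrac12$ is the first term $-t^{2\sigma-1}g(x)$, which diverges as $t\to 0^+$ wherever $g(x)\neq 0$. No Taylor expansion of $g$ in the \emph{spatial} variable can cancel a divergence in $t$; the $C^1$ hypothesis on $g$ is simply irrelevant to this term. This computation in fact suggests that $\partial_t U$ is genuinely unbounded near $\{t=0\}$ when $\sigma<\tfrac12$ and $g\not\equiv 0$, so either the intended conclusion concerns $\nabla_x U$ only (which your tangential argument does deliver), or you should consult the precise statement in \cite{JLX} before trying to reconstruct the proof.
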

\begin{prop}\label{prop:liminf}\rm {\cite[Proposition 3.1]{JLX}}
Suppose that $U\in C^{2}(\B_R^+)\cap C^{1}(\overline{\B_R^+}\setminus\{0\})$  and $U>0$ in $\B_R^+ \cup \partial'\B_R^+\setminus \{0\} $ is a  solution of
\[
\begin{cases}
\begin{aligned}
&\mathrm{div}(t^{1-2\sigma} \nabla U)\le 0 & \quad& \mbox{in }\ \ \B_R^+,\\
&\frac{\partial U}{\partial \nu^\sigma}  \ge 0 &\quad& \mbox{on }\ \pa'\B^{+}_{R}\setminus \{0\},
\end{aligned}
\end{cases}
\]
then
\[
\liminf_{X\to 0}U(X)>0.
\]
\end{prop}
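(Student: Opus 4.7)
I would argue by a barrier comparison, exploiting the fact—recorded in Proposition \ref{Bocher}—that the function $\Gamma(\xi):=|\xi|^{2\sigma-n}$ is a positive classical solution of $\mathrm{div}(t^{1-2\sigma}\nabla\Gamma)=0$ in $\R^{n+1}_+\setminus\{0\}$ with $\partial\Gamma/\partial\nu^\sigma=0$ on $\partial'\R^{n+1}_+\setminus\{0\}$. The idea is to add a small multiple of $\Gamma$ to $U$: the resulting perturbation blows up at the origin, preserves the supersolution structure, and can be handled by a weak minimum principle.

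Concretely, I would fix $r_0\in(0,R)$. Since $U$ is continuous and strictly positive on the compact set $\partial''\B^+_{r_0}$, the quantity $m_0:=\min_{\partial''\B^+_{r_0}}U$ is strictly positive. For each $\epsilon>0$ I would set
\[
W_\epsilon(\xi):=U(\xi)+\epsilon\,|\xi|^{2\sigma-n},
\]
which by linearity is again a classical supersolution of $\mathrm{div}(t^{1-2\sigma}\nabla W_\epsilon)\le 0$ in $\B^+_{r_0}\setminus\{0\}$ with $\partial W_\epsilon/\partial\nu^\sigma\ge 0$ on $\partial'\B^+_{r_0}\setminus\{0\}$, and moreover satisfies $W_\epsilon(\xi)\to+\infty$ as $\xi\to 0$. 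Next I would apply the weak minimum principle for $\mathrm{div}(t^{1-2\sigma}\nabla\,\cdot\,)$ with the nonnegative conformal flux on the flat boundary (available from the $A_2$-weighted De Giorgi--Nash--Moser theory underlying Proposition \ref{Harnack inequality}) in the truncated annulus $\B^+_{r_0}\setminus\overline{\B^+_\eta}$. As soon as $\eta$ is small enough that $\epsilon\eta^{2\sigma-n}\ge m_0$, the outer boundary gives $W_\epsilon\ge m_0+\epsilon r_0^{2\sigma-n}\ge m_0$ and the inner boundary gives $W_\epsilon\ge\epsilon\eta^{2\sigma-n}\ge m_0$; hence $W_\epsilon\ge m_0$ throughout the annulus, and sending $\eta\to 0$ yields $W_\epsilon\ge m_0$ on all of $\B^+_{r_0}\setminus\{0\}$. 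For each fixed $\xi\neq 0$, sending $\epsilon\to 0^+$ then gives $U(\xi)\ge m_0$, so $\liminf_{X\to 0}U(X)\ge m_0>0$.

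The main subtlety I anticipate is justifying the two structural facts being invoked: first, that $\epsilon\Gamma$ is a genuine admissible perturbation—that is, that $\Gamma$ solves both the bulk equation and the zero-flux boundary condition away from the origin in the appropriate (classical and weak) sense, which is a short computation in polar coordinates and is implicit in the Caffarelli--Silvestre extension used throughout the paper; and second, that the minimum principle really applies to a classical supersolution on the truncated half-annulus subject to $\partial/\partial\nu^\sigma\ge 0$ on the flat part, which is a standard combination of the interior strong minimum principle for the weighted operator with a Hopf-type lemma ruling out a minimum on $\partial'$ unless $U$ is constant. Once these two ingredients are in place, the comparison above is the whole argument.
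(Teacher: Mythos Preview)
Your argument is correct and is essentially the standard proof: the paper itself does not supply a proof of this proposition but cites it from \cite[Proposition 3.1]{JLX}, where the argument is precisely the barrier comparison you describe---perturbing $U$ by $\epsilon|\xi|^{2\sigma-n}$, applying the minimum principle on truncated half-annuli, and passing to the limit first in the inner radius and then in $\epsilon$.
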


\begin{prop}\label{A.2}\rm{\cite[Lemma A.2]{JLX}}
Suppose that $U\in W^{1£¬2}(t^{1-2\sigma}, \B^+_{R})$, $U\geq 0$ on $\partial''\B^+_{R}$ is a weak supsolution of
\[
\begin{cases}
\begin{aligned}
&\mathrm{div}(t^{1-2\sigma} \nabla U)=0 & \quad &\mbox{in }\B^{+}_R,\\
&\frac{\pa U}{\pa \nu^\sigma} =a(x)U&\quad &\mbox{on }\pa'\B^{+}_{R},
\end{aligned}
\end{cases}
\]
if there exists $\varepsilon=\varepsilon(n,\sigma)$ such that for all $|a(x)|\leq \varepsilon|x|^{-2\sigma}$,
then
\[
U\geq 0\quad\quad\mbox{in }\ \ \B^{+}_R.
\]
\end{prop}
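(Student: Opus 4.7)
The plan is to prove the maximum principle by testing the weak supersolution inequality against the negative part $U^{-}:=\max\{-U,0\}$ and absorbing the resulting weighted boundary term using a fractional Hardy trace inequality.

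First, since $U\ge 0$ on $\partial''\B^+_R$, the truncation $U^{-}$ belongs to $W^{1,2}(t^{1-2\sigma},\B^+_R)$ and vanishes on the spherical cap $\partial''\B^+_R$, so it is an admissible nonnegative test function after the standard approximation by functions in $C^\infty_c(\B^+_R\cup\partial'\B^+_R)$. Substituting $\Phi=U^{-}$ into the weak supersolution formulation and using $\nabla U\cdot\nabla U^{-}=-|\nabla U^{-}|^2$ and $U\cdot U^{-}=-|U^{-}|^2$ almost everywhere, one obtains
\[
\int_{\B^+_R} t^{1-2\sigma}|\nabla U^{-}|^2\,dX\le \int_{\partial'\B^+_R} a(x)\,|U^{-}|^2\,dx\le \varepsilon\int_{\partial'\B^+_R} |x|^{-2\sigma}|U^{-}|^2\,dx.
\]

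The second step invokes a weighted Hardy trace inequality: there exists $C=C(n,\sigma)>0$ such that, for every $W\in W^{1,2}(t^{1-2\sigma},\B^+_R)$ with $W\big|_{\partial''\B^+_R}=0$,
\[
\int_{\partial'\B^+_R} |x|^{-2\sigma}|W(x,0)|^2\,dx\le C\int_{\B^+_R} t^{1-2\sigma}|\nabla W|^2\,dX.
\]
This can be derived by combining the trace embedding $W^{1,2}(t^{1-2\sigma},\B^+_R)\hookrightarrow H^{\sigma}(B_R)$ (the local version of Proposition \ref{tra}) with the classical fractional Hardy inequality $\int_{\R^n}|x|^{-2\sigma}|w|^2\,dx\le C\,[w]_{H^\sigma}^2$; alternatively, one may expand $W$ in spherical harmonics and reduce to a one-dimensional weighted Hardy inequality in the radial variable.

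Applying this Hardy trace inequality to $W=U^{-}$ and choosing $\varepsilon:=1/(2C(n,\sigma))$, the two estimates combine to give
\[
\int_{\B^+_R} t^{1-2\sigma}|\nabla U^{-}|^2\,dX\le \tfrac{1}{2}\int_{\B^+_R} t^{1-2\sigma}|\nabla U^{-}|^2\,dX,
\]
which forces $\nabla U^{-}\equiv 0$ in $\B^+_R$. Together with $U^{-}=0$ on $\partial''\B^+_R$, this yields $U^{-}\equiv 0$ and hence $U\ge 0$ in $\B^+_R$. The main obstacle is the weighted trace inequality above: the weight $|x|^{-2\sigma}$ is exactly scaling-critical for the Dirichlet integral with weight $t^{1-2\sigma}$, so the content of the argument is precisely that this critical constant is finite; once it is, the smallness hypothesis on $a$ allows the usual absorption to close the estimate.
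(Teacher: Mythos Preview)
The paper does not supply its own proof of this proposition; it is quoted verbatim from \cite[Lemma A.2]{JLX} and placed in the appendix as a black box. Your argument---testing the supersolution inequality with $U^{-}$, then absorbing the boundary term via a scaling-critical Hardy trace inequality---is exactly the standard proof and is essentially how it is done in \cite{JLX}. The derivation you sketch for the Hardy trace inequality (extend $U^{-}$ by zero across $\partial''\B^+_R$, apply the global trace inequality of Proposition~\ref{tra} to land in $\dot H^\sigma(\R^n)$, then invoke the fractional Hardy inequality $\int_{\R^n}|x|^{-2\sigma}|w|^2\,dx\le C\,[w]_{\dot H^\sigma}^2$, valid since $n>2\sigma$) is correct and is the cleanest route.
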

{\bf Acknowledgements}
We would like to express our deep thanks to Professor Jingang Xiong for useful discussions on the subject of this paper.

\medskip

{}
\end{document}